%% file: epsilon_main.tex
\documentclass{amsart}
\usepackage{amsmath,amsthm,amsfonts,amssymb, mathrsfs}
\usepackage{graphicx,xcolor}
\usepackage{comment}
\usepackage{todonotes}[]
\usepackage{tikz}

\usetikzlibrary{automata, positioning, arrows.meta}

\usepackage[style=alphabetic]{biblatex}
\newcommand{\RR}{\mathbb{R}}

\newcommand{\ZZ}{\mathbb{Z}}

\newcommand{\HH}{\mathbb{H}}
\newcommand{\PP}{\mathbb{P}}

\DeclareMathOperator{\End}{End}
\DeclareMathOperator{\trace}{tr}

\DeclareMathOperator{\diam}{diam}

\newcommand{\norm}[1]{\left\lVert#1\right\rVert}

\newtheorem{thm}{Theorem}[section]
\newtheorem{lemma}[thm]{Lemma}
\newtheorem{prop}[thm]{Proposition}
\newtheorem{coro}[thm]{Corollary}

\theoremstyle{remark}
\newtheorem{rmk}[thm]{Remark}

\theoremstyle{definition}
\newtheorem{defi}[thm]{Definition}
\newtheorem{ex}[thm]{Example}
\newtheorem{lem}[thm]{Lemma}
\newtheorem*{defi*}{Definition}

\title{On separated families of Anosov representations}

\author{Joaqu\'in Lejtreger}
\address[Joaqu\'in Lejtreger]{
Sorbonne Université and Université Paris Cité, CNRS, IMJ-PRG, F-75005 Paris, France.}
\email{lejtreger@imj-prg.fr}
\date{}

\author{Joaqu\'in Lema}
\address[Joaqu\'in Lema]{Department of Mathematics, Boston College, Chestnut Hill, MA 02467}
\email{lemajo@bc.edu}
\date{}

\begin{document}

\begin{abstract}
We introduce different notions of separation for families of Anosov representations. We show that, along a diverging sequence of such families, the critical exponent is asymptotic to a combinatorial invariant computable from the spectral data of a finite graph. Our method allows us to derive bounds on the Thurston asymmetric metric of~\cite{carvajales2024thurston}. As an application, we study specific degenerations of convex projective structures on a pair of pants, generalizing an example of McMullen.
\end{abstract}

\maketitle

\input{sections/intro}

\input{sections/epsilon} 
\input{sections/thermodynamic}
\input{sections/proof}
\input{sections/sl3}

\appendix
\input{sections/finite_type.tex}

\printbibliography

\end{document}

%% file: sections/intro.tex
\section{Introduction}
The goal of this article is to estimate the critical exponent of diverging families of representations of a hyperbolic group $\Gamma$ into a semisimple Lie group $G$, under a dynamical condition on a chosen generating set that we call \emph{strong separation}.

Our motivation comes from an example of McMullen \cite{mcmullen1998hausdorff}, recently reviewed and expanded by Courtois-Guilloux \cite{guilloux2024hausdorff} and Dang-Mehmeti \cite{nguyen-bac2024variation}, who compute asymptotics for the critical exponent for diverging Schottky representations acting on $\HH^n$.

Our framework unifies and generalizes these computations to higher rank.
Furthermore, our techniques also provide control along strongly separated sequences of the Thurston asymmetric metric introduced in \cite{carvajales2024thurston}.

As an application, we study degenerations of convex projective structures on a pair of pants, whose associated holonomies define strongly separated representations of the free group into $SL_3(\RR)$.

\subsection{The setup}

Given $\Gamma$ a hyperbolic group, $G$ a semisimple Lie group with non-compact factors, $\mathfrak{a}^+$ a Weyl chamber, and $\Theta$ a set of simple roots, we will be concerned with the study of $\Theta$-Anosov representations. This notion was introduced in \cite{labourie2004anosov} and \cite{guichard2012anosov}, and can be thought of as a higher-rank analog of convex cocompact representations.
Such representations come equipped with a pair of continuous, dynamics-preserving, and equivariant maps 
$$(\xi, \xi^{op}): \partial \Gamma \to \mathcal{F}_\Theta \times \mathcal{F}^{op}_\Theta,$$
where $\mathcal{F}_\Theta$ and $\mathcal{F}^{op}_\Theta$ are the opposite partial flag manifolds associated to $\Theta$. 

We consider the \emph{Cartan projection} $\kappa: G \to \mathfrak{a}^+$, and the \emph{Jordan projection} $\lambda: G \rightarrow \mathfrak{a}^+$. In $SL_n(\RR)$, the Cartan projection is the vector containing the log of the singular values of a matrix in order, and the Jordan projection is the vector containing the log of the moduli of the eigenvalues of a matrix in order.

For a representation $\rho: \Gamma \to G$, we define the \emph{limit cone} $\mathcal{L}_\rho$ of $\rho$ as the smallest closed convex cone of $\mathfrak{a}^+$ containing $\lambda (\rho(\Gamma))$. 

There are many length functions associated to a given Anosov representation, each determined by a functional $\varphi \in \mathfrak{a}^*$ and corresponding to a different notion of growth in the symmetric space: we will say that $\varphi$ is \emph{positive in the limit cone of $\rho$} if $\varphi|_{\mathcal{L}_\rho - \{0\}} > 0$. 
Given such a functional, we consider its \emph{length function} $L^\varphi_\rho : \Gamma \rightarrow \mathbb{R}_{>0}$ via
$$L^\varphi_\rho (\gamma) = \varphi (\lambda (\rho (\gamma))).$$
Each such length function has an associated \emph{critical exponent}, defined via
$$h_\rho(\varphi) := \limsup_{t\to \infty} \frac{1}{t} \log \# \{ [\gamma] \in [\Gamma] : L^\varphi_\rho (\gamma) \leq t\} \in [0,\infty],$$
where $[\Gamma]$ is the set of conjugacy classes of $\Gamma$.

To estimate these critical exponents, it will be necessary to code the limit map and apply techniques from thermodynamic formalism. To do so, we use the notion of \emph{strong Markov codings}, defined by Ghys-de la Harpe in \cite{ghys1990sur}, based on the work of Cannon \cite{cannon1984combinatorial}, and proved to be a successful tool in the ergodic theory of hyperbolic group in the work of Cantrell-Tanaka \cite{cantrell2025manhattan}.

Given a symmetric set of generators $S$ for $\Gamma$, a strong Markov coding is a finite directed graph $\mathcal{G} = (\mathcal{V},\mathcal{E})$ equipped with a labeling of the edges $\pi: \mathcal{E} \to S$ by elements of $S$. Such labels allow us to associate with any finite path a geodesic segment in $\Gamma$, and similarly, to any bi-infinite path, a geodesic in the group.
Collecting the labels traversed by a bi-infinite path in $\mathcal{G}$, we obtain a dynamical system $X_\mathcal{G} \subset S^\mathbb{Z}$ equipped with the shift map, called a \emph{sofic shift}.
Such a sofic shift codes the boundary via a map $p: X_\mathcal{G} \rightarrow \partial \Gamma$, sending the sequence of labels of a bi-infinite geodesic path to its forward endpoint on $\partial\Gamma$.

Using this language, the work of Bochi-Potrie-Sambarino \cite{bochi2019anosov} provides a criterion for detecting Anosov representations adapted to a given strong Markov coding. 
More precisely, one can prove that a representation $\rho: \Gamma \rightarrow G$ is $\Theta$-Anosov if and only if we can assign for every vertex $v \in \mathcal{V}$, an open set $M_v$ in the partial flag $\mathcal{F}_\Theta$, called a multicone, in such a way that, for each $e = (v_0, v_1) \in \mathcal{E}$,
$$\rho(\pi(e))^{-1}M_{v_1} \subseteq M_{v_0}.$$
Such multicones serve as a ``coarse'' Markov partition of the limit set.
One can show that $\xi \circ p (\underline{x}) \in M_{v_0}$, for every $\underline{x} \in X_\mathcal{G}$ induced by a path starting at $v_0$ (see Theorem \ref{thm:coding}).

Every element in the image of a $\Theta$-Anosov representation acts in the partial manifold $\mathcal{F}_\Theta$ with north-south dynamics, in the sense that for every $\gamma \in \Gamma$ of infinite order, we get the property that $\rho (\gamma)^n$ acting on $\mathcal{F}_\Theta$ converges uniformly in compact sets to $\xi (\gamma_+)$ outside a compact set $C_{\rho (\gamma)}^-$, called the Schubert locus.
The separation conditions we impose require these north-south dynamics of the generators to be aligned with the multicone partition.

Informally, $\rho$ is $\varepsilon$-separated with respect to $\mathcal{G}$ and $\Theta$ if we can choose an invariant multicone family $\{M_v\}_{v \in \mathcal{V}}$ in $\mathcal{F}_\Theta$ in such a way that $M_v$ is $\varepsilon$-away from the Schubert loci of the repelling flags of generators corresponding to incoming edges. It is $\varepsilon$-strongly separated if, in addition, each generator $\rho(s)$ exhibits uniform north-south behavior in $\mathcal{F}_\Theta$: its attracting and repelling flags are $2\varepsilon$-apart, and its action contracts the $\varepsilon$-complement of the repelling Schubert locus into the $\varepsilon$-ball around the attracting flag. See Definition~\ref{def:separatednested} for a detailed definition.

Notice that the strong separation condition resembles the usual definition of a ``ping-pong'' system, except that admissible words are governed by the strong Markov coding, taking into account relations in $\Gamma$.

To verify that a representation is separated, we only need to check finitely many conditions, which makes this notion easy to verify in specific examples, as we will see, for instance, in the proof of Theorem \ref{thm:sl3}.

The key remark is that, for a given $\varepsilon > 0$, a representation being $\varepsilon$-strongly separated allows us to identify regions of the limit set where the Busemann cocycle is well-behaved.
For example, in rank one, these conditions imply the existence of $C>0$ (only depending on $\varepsilon$) such that for any convex-cocompact representation $\rho$ that is strongly separated, we get the following lower bound for the Busemann function:
$$d(o,\rho(g)^{-1}o) - C \leq b_x (o,\rho(g)^{-1}o),$$
where $S$ denotes the set of generators, $g, h \in S$, and $x$ belongs to the subset of the limit set codified by infinite words finishing with $h$, provided $gh$ is part of the language of the Markov coding.
See Lemma \ref{lemma:key} for the higher rank version of this inequality.

\subsection{Main results}
Having established the setup, we proceed to state the main results.
Given a choice of a Weyl chamber $\mathfrak{a}^+$, we denote the space
$$\mathfrak{a}_\Theta = \bigcap_{\alpha \in \Pi \setminus \Theta} \ker \alpha.$$

Two facts drive the argument of our main theorems: 
first, a characterization of $h_\rho (\varphi)$ as the unique zero of the function $s \to P(- s \varphi \circ B)$, where $B$ is an $\mathfrak{a}_\Theta$-valued potential obtained from the Iwasawa cocycle (see Proposition \ref{prop:poincareradioespectral}).
Second, the strong separation condition bounds $\varphi \circ B$ between two locally constant potentials depending solely on the Cartan projection on our given set of generators $S$.

We remark that the first item holds regardless of any separation condition and already appears in the literature in different forms. 
Our approach is closer to Quint \cite{quint2003indicateur}, although it appears in some form in work by Ledrappier \cite{ledrappier1994structure} and Sambarino \cite{sambarino2014hyperconvex}.

The second ingredient suggests that, under strong separation, we can produce the following approximation for $h_\rho (\varphi)$:

\begin{defi*}
 Given $\rho: \Gamma \rightarrow G$ a $\Theta-$Anosov representation, $\mathcal{G}$ a strong Markov structure and $\varphi \in (\mathfrak{a}_\Theta)^*$ positive on the limit cone of $\rho$, we define the \emph{approximating} Cartan exponent $h_\rho^\kappa (\varphi)$ as the number defined via
    $$P(- h_\rho^\kappa (\varphi) \varphi \circ C) =0,$$
 where $C : X_\mathcal{G} \rightarrow \mathbb{R}$ is the vector-valued potential $C(\underline{x}) = \kappa (x_0^{-1})$, and $P$ denotes the pressure on the shift.
\end{defi*}

Since the potential $C$ depends only on the first coordinate, the number $h_\rho^\kappa (\varphi)$ can be found by computing the spectral radius of a matrix depending on the combinatorics of $\mathcal{G}$, making this quantity easy to compute (see Appendix \ref{app:subshifts}).
Putting the two previously mentioned facts together, we obtain the following theorem:

\begin{thm}
 Given $\varepsilon>0$, let $(\rho_n) \in Hom (\Gamma,G)^\mathbb{N}$ be a sequence of $\varepsilon-$strongly separated representations with respect to $\Theta$ and $\mathcal{G}$ such that $\min_{g \in S} \varphi (\kappa (\rho_n (g))) \to \infty$, then
    $$\lim_{n \to \infty} \frac{h_{\rho_n} (\varphi)}{h_{\rho_n}^\kappa (\varphi)} = 1.$$
\end{thm}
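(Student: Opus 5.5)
The plan is to combine the two ingredients described above: the characterization of $h_\rho(\varphi)$ as the zero of $s\mapsto P(-s\,\varphi\circ B)$ (Proposition \ref{prop:poincareradioespectral}), and a two-sided comparison between $\varphi\circ B$ and the locally constant potential $\varphi\circ C$, coming from strong separation (Lemma \ref{lemma:key}).

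\textbf{Step 1: comparison of potentials.} We want a constant $K=K(\varepsilon,\varphi)$, depending only on $\varepsilon$ and not on $n$, such that
$$\varphi(C(\underline{x})) - K \le \varphi(B_n(\underline{x})) \le \varphi(C(\underline{x})) + K$$
for every $\underline{x}\in X_\mathcal{G}$ and every $\rho=\rho_n$. Here $B_n$ is the Iwasawa-cocycle potential of $\rho_n$, which at $\underline{x}$ is (after the identification made in Proposition \ref{prop:poincareradioespectral}) $\sigma(\rho_n(x_0)^{-1},\xi_n(p(\sigma\underline{x})))$.
\begin{itemize}
\item The upper bound is the general inequality $\varphi(\sigma(g,\cdot))\le \varphi(\kappa(g))+\text{const}$; for $\varphi$ in the dual of $\mathfrak a_\Theta$ this constant is $0$.
\item The lower bound is exactly Lemma \ref{lemma:key}. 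Strong separation puts $\xi_n(p(\sigma\underline{x}))$ inside the multicone $M_v$, which is $\varepsilon$-far from the repelling Schubert locus of $\rho_n(x_0)$. A cocycle at a flag $\varepsilon$-transverse to the repelling flag differs from the Cartan projection by at most a constant depending on $\varepsilon$.
\end{itemize}
Since pressure is monotone and $P(f+c)=P(f)+c$, we get for all $s>0$
$$P(-s\varphi\circ C)-sK\le P(-s\varphi\circ B_n)\le P(-s\varphi\circ C)+sK.$$

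\textbf{Step 2: locating the zeros.} Write $m_n=\min_{g\in S}\varphi(\kappa(\rho_n(g)))$ and $M_n=\max_{g\in S}\varphi(\kappa(\rho_n(g)))$. The function $f_n(s)=P(-s\varphi\circ C_n)$ is convex and decreasing. Along any segment its slope is $-\int\varphi\circ C_n\,d\mu$ for the corresponding equilibrium state $\mu$, so the slope lies in $[-M_n,-m_n]$.

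Set $h=h^\kappa_{\rho_n}(\varphi)$, so that $f_n(h)=0$. Evaluate at $s=h(1+\delta)$:
$$f_n(h(1+\delta))+h(1+\delta)K\le -h\delta m_n + h(1+\delta)K.$$
This is negative as soon as $\delta m_n>(1+\delta)K$, which holds for $n$ large because $m_n\to\infty$. Hence $P(-s\varphi\circ B_n)<0$ at this $s$, so $h_{\rho_n}(\varphi)\le(1+\delta)h$.

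Symmetrically, at $s=h(1-\delta)$ we have $f_n(s)\ge h\delta m_n$, and therefore
$$P(-s\varphi\circ B_n)\ge h\delta m_n - h(1-\delta)K>0,$$
which gives $h_{\rho_n}(\varphi)\ge(1-\delta)h$. Since $\delta$ is arbitrary, the ratio tends to $1$.

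The only subtlety in this step is that it needs $h>0$. This holds because $h$ is the exponent of the topological pressure, $h\ge h_{top}/M_n>0$, and positivity is all the argument uses; no lower bound on $h$ is needed.

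\textbf{Main obstacle.} The real work is Step 1's lower bound, namely making the constant $K$ uniform in $n$. Lemma \ref{lemma:key} provides this: the uniform north–south dynamics of each $\rho_n(s)$ and the $\varepsilon$-distance of the multicones from the Schubert loci are conditions depending only on $\varepsilon$.

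A further check is needed: that $B_n$ is cohomologous, on the coded system, to the potential appearing in Proposition \ref{prop:poincareradioespectral}, with the endpoint conventions (forward endpoint versus inverse generators) matching the definition $C(\underline x)=\kappa(x_0^{-1})$. If the key lemma is stated only for cylinders of length two, we use that $X_\mathcal{G}$ is of finite type (Appendix \ref{app:subshifts}) to pass to that higher block presentation, which leaves the pressure unchanged.
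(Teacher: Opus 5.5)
Your proposal is correct and follows the paper's argument: bound $\varphi\circ B$ between $C_\varphi - 2\varphi(R_\varepsilon)$ and $C_\varphi$ using Lemma~\ref{lemma:key} and Remark~\ref{rmk:upperbound}, then apply monotonicity of pressure and the characterization of $h_{\rho_n}(\varphi)$ in Proposition~\ref{prop:poincareradioespectral}. The only difference is bookkeeping: the paper (Proposition~\ref{prop:criticalexponents}) uses $C_\varphi \geq m_n$ to turn the additive error into a multiplicative one, $C_\varphi - 2\varphi(R_\varepsilon) \geq (1 - 2\varphi(R_\varepsilon)/m_n)\,C_\varphi$, and reads off $h^\kappa_{\rho_n}(\varphi) \leq h_{\rho_n}(\varphi) \leq (1 - 2\varphi(R_\varepsilon)/m_n)^{-1} h^\kappa_{\rho_n}(\varphi)$ by rescaling, whereas you reach the same conclusion from the slope bound $P(-(s+t)C_\varphi) \leq P(-sC_\varphi) - t\,m_n$ and a $\delta$-argument.
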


We can, in fact, prove a slightly better statement, as one can check in Theorem \ref{thm:general+}.
Moreover, in Section \ref{sec:proof} we also provide some weaker estimates that assume only separation (not strong separation).

We can show that regardless of our use of a Markov coding, a group admitting a sequence of representations as in the previous theorem is necessarily (virtually) free (see Proposition \ref{prop:onlyfree}).
Although this limits the applicability of our result, it is important to note that the choice of automaton is crucial for obtaining precise asymptotics.
Particularly, it is not always true that any sequence of representations of the free group is strongly separated with respect to the standard set of generators.

Under this strong separation hypothesis, one also gets asymptotic control on the length functions, providing good control on the limit cone along such degenerations (see Proposition \ref{coro:limitcones}). 
This also provides information about the Thurston asymmetric metric along these sequences.
Given two representations $\rho_1,\rho_2$, and $\varphi$, a functional positive in the limit cone of both representations, the Thurston asymmetric metric is defined by
$$d_{Th}^\varphi (\rho_1,\rho_2) = \log \sup_{[\gamma] \in [\Gamma]} \left( \frac{h_{\rho_2}^\varphi L^\varphi_{\rho_2} ([\gamma])}{h_{\rho_1}^\varphi L^\varphi_{\rho_1} ([\gamma])} \right).$$
This notion was introduced in \cite{carvajales2024thurston}, where it was proven to define an asymmetric metric under the assumption that any automorphism of $\mathfrak{a}$ leaving $\varphi$ invariant is inner (and studied in greater generality in \cite{jyothis2026horoboundary}).
On this subject, we can prove the following theorem:

\begin{thm}
Let $\Theta \subset \Pi$ be invariant under the opposition involution, and $\varphi \in \mathfrak{a}_\Theta^*$ for which $d^{\varphi}_{Th}$ defines an asymmetric metric on representations for which $\varphi$ is positive in its limit cone. 
Then, given $(\rho_n) \in Hom (\Gamma,G)^\mathbb{N}$ a sequence of $\varepsilon$-strongly separated representations with respect to $\Theta$ and $\mathcal{G}$ for which $\min_{g \in S} \varphi (\kappa (\rho_n (g)))\to \infty$, and $\max_{g \in S} \varphi (\kappa (\rho_n (g)))/\min_{g \in S} \varphi (\kappa (\rho_n (g)))$ is bounded above and below uniformly in $n$ for any $g \in S$, then:
$$\sup_n \max\{ d_{Th}^\varphi (\rho_n, \rho_0), d_{Th}^\varphi (\rho_0,\rho_n)\} <\infty.$$
\end{thm}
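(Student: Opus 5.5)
We control the numerator and denominator of the ratio inside $d_{Th}^\varphi$ separately, using the two facts singled out above: the critical exponent asymptotics (Theorem on $h_{\rho_n}/h^\kappa_{\rho_n}\to 1$) and a two-sided comparison of length functions with word-combinatorial quantities along strongly separated sequences (the control of length functions announced in Proposition \ref{coro:limitcones}). Write $m_n=\min_{g\in S}\varphi(\kappa(\rho_n(g)))$ and $M_n=\max_{g\in S}\varphi(\kappa(\rho_n(g)))$. By hypothesis there is $K\ge 1$ with $M_n\le K m_n$ for all $n$.

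\textbf{Step 1: lengths.} Every conjugacy class $[\gamma]$ of infinite order is represented by a periodic orbit of the sofic shift $X_\mathcal{G}$, that is, by a cyclic word $s_1\cdots s_k$ of length $k=|\gamma|_{cyc}$, the cyclically reduced word length. For a $\varepsilon$-strongly separated $\rho$, $L^\varphi_\rho(\gamma)$ equals the Birkhoff sum of $\varphi\circ B$ along this periodic orbit. By the key inequality (Lemma \ref{lemma:key}), $\varphi\circ B$ is squeezed between $\varphi(\kappa(\rho(x_0)^{-1}))-C_\varepsilon$ and $\varphi(\kappa(\rho(x_0)^{-1}))+C_\varepsilon$, with $C_\varepsilon$ depending only on $\varepsilon$. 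Since $\Theta$ is invariant under the opposition involution, we may take $\varphi$ invariant as well, so $\varphi(\kappa(g^{-1}))=\varphi(\kappa(g))$. Summing over the orbit gives
$$k(m_n-C_\varepsilon)\le L^\varphi_{\rho_n}(\gamma)\le k(M_n+C_\varepsilon).$$
For the fixed representation $\rho_0$ we use instead the standard quasi-isometry bounds for Anosov representations: $a k\le L^\varphi_{\rho_0}(\gamma)\le b k$ with $a,b>0$.

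\textbf{Step 2: exponents.} The potential $\varphi\circ C$ takes values in $[m_n,M_n]$. Monotonicity of pressure, $P(-s\psi)\le P(0)-s\min\psi$, together with the symmetric lower bound, gives $h_{top}/M_n\le h^\kappa_{\rho_n}(\varphi)\le h_{top}/m_n$, where $h_{top}$ is the entropy of $X_\mathcal{G}$, i.e.\ the log of the spectral radius of $\mathcal{G}$ (see Appendix \ref{app:subshifts}). By the main theorem, $h_{\rho_n}(\varphi)\asymp 1/m_n$ for $n$ large, with constants depending only on $K$.

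\textbf{Step 3: combining.} For large $n$ we have $m_n\ge 2C_\varepsilon$. Then
$$\frac{h_{\rho_n}L^\varphi_{\rho_n}(\gamma)}{h_{\rho_0}L^\varphi_{\rho_0}(\gamma)}\le \frac{(c/m_n)\,k\,(K m_n+C_\varepsilon)}{h_{\rho_0}\,a\,k}\le C',$$
and similarly the reciprocal ratio is at most $\frac{h_{\rho_0}\,b\,k}{(c'/m_n)\,k\,m_n/2}\le C''$. The factor $k$ cancels, so both suprema over $[\Gamma]$ are uniformly bounded. The finitely many small $n$ are each individually finite, because every $\rho_n$ is Anosov and so its lengths are comparable to word length. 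This bounds both $d_{Th}^\varphi(\rho_n,\rho_0)$ and $d_{Th}^\varphi(\rho_0,\rho_n)$.

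\textbf{Main obstacle.} The delicate point is Step 1. We must check that the periodic-orbit Birkhoff sum of $\varphi\circ B$ really computes $\varphi(\lambda(\rho(\gamma)))$ for every conjugacy class, with an additive error of at most $C_\varepsilon$ per letter and no extra error. This needs the cohomology of the Iwasawa cocycle with the Jordan projection on periodic points. It also needs that every conjugacy class is realized, up to bounded multiplicity, by a cycle in $\mathcal{G}$ of length comparable to its cyclic word length, which follows from the strong Markov property. I would try to extract both directly from Proposition \ref{prop:poincareradioespectral} and the coding in Theorem \ref{thm:coding}.
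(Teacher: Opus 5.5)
Your strategy is essentially the paper's. The two-sided bound $k(m_n-C_\varepsilon)\le L^\varphi_{\rho_n}\le k(M_n+C_\varepsilon)$ from Lemma \ref{lemma:key} plays the role of Lemma \ref{lemma:lasdistancias}. The comparison of $\rho_0$ with stable length is the same Kassel--Potrie input. Your estimate $h_{top}/M_n\le h^\kappa_{\rho_n}(\varphi)\le h_{top}/m_n$ is exactly what makes the paper's hypotheses $\inf_n h^\kappa_{\rho_n}\varphi(\kappa(\rho_n(g)))>0$ and $\sup_n h^\kappa_{\rho_n}\varphi(\kappa(\rho_n(g)))<\infty$ follow from the bounded ratio; the paper only asserts this in a remark. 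Steps 2 and 3 are fine. The Birkhoff-sum identity you list as an obstacle is just $\lambda_\Theta(g)=\sigma_\Theta(g,g_+)$ plus the cocycle property, as in Proposition \ref{prop:lengthcomparison}.

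The flaw is in Step 1, precisely where the hypothesis $\iota(\Theta)=\Theta$ has to enter.

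\emph{Conjugacy classes.} For a general strong Markov structure you do not get every infinite-order class as a periodic orbit of length $|\gamma|_{cyc}$, and this does not follow from the bare strong Markov property. What is available is Proposition \ref{prop:cantrell}: a loop $\gamma'$ with $[\gamma']=[\gamma^{\pm N}]$. Positive powers are harmless, since the ratio in $d^\varphi_{Th}$ is unchanged under $\gamma\mapsto\gamma^N$, and $k=|\gamma'|_\infty=|\gamma^N|_\infty$ is what the $\rho_0$ bound needs. But depending on the sign (and on orientation conventions, since the paper notes the Birkhoff sum over a loop computes $\lambda_\Theta$ of an inverse), the loop may give you $\varphi(\lambda_\Theta(\rho_n(\gamma^{-N})))=\varphi(\iota\lambda_\Theta(\rho_n(\gamma^{N})))$ rather than $L^\varphi_{\rho_n}(\gamma^N)$.

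\emph{The invariance step.} Your remedy, ``we may take $\varphi$ invariant'', is not allowed. The functional $\varphi$ is fixed by the statement, $d^\varphi_{Th}$ depends on it, and $\varphi(\kappa(g^{-1}))=\varphi(\iota\kappa(g))\neq\varphi(\kappa(g))$ in general (take $\varphi=\omega_1$ in $SL_3(\mathbb{R})$). Moreover, the one place you actually use it, to place $\varphi(\kappa(\rho(x_0)^{-1}))$ in $[m_n,M_n]$, does not need it, because $S$ is symmetric.

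\emph{The repair}, as in the paper's case $N<0$, goes as follows. Since $\iota(\Theta)=\Theta$, we have $\varphi\circ\iota\in\mathfrak{a}_\Theta^*$. It is positive on the $\iota$-invariant limit cone, and by symmetry of $S$ its minimum and maximum over $\kappa(\rho_n(S))$ are again $m_n$ and $M_n$. So whenever the loop represents the inverse, run Step 1 with $\varphi\circ\iota$ in place of $\varphi$. This gives the same bounds $k(m_n-C)\le L^\varphi_{\rho_n}(\gamma^N)\le k(M_n+C)$, with $k=|\gamma'|_\infty$. The $\rho_0$ side needs no change, since $L^\varphi_{\rho_0}(\gamma^N)$ is directly comparable to $|\gamma^{N}|_\infty=|\gamma'|_\infty$. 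With this replacement the rest of your argument goes through unchanged.
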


As an application of this work, we study degenerations of convex projective structures on a pair of pants, originally parametrized by Goldman in \cite{goldman1990convex} as an $8$-dimensional open cell.
We will use Fock-Goncharov's parametrization (see \cite{fock2007moduli}), consisting on two triple-ratios $(X_1, X_2)$, and six cross-ratios $\vec{Z},\vec{W} \in \mathbb{R}^3_{>0}$ (see Section \ref{sec:sl3} for more details).

Of all these parameters, it is particularly interesting to understand how the geometry varies along families with different triple ratios, as these are, in some sense, the truly higher-rank parameters in these representations.
Denote by $\mathcal{RP} (X_1, X_2)$ the slice of convex projective structures with constant triple ratio parameters $X_1, X_2$.

Denote by $\mathfrak{a}$ Cartan subspace of diagonal matrices in $\mathfrak{sl}_3 (\mathbb{R})$, and $\omega_i$ the vectors defined via $\alpha_j (\omega_i) = \delta_{ij}$, for $i,j =1,2$.

\begin{thm}
 Let $\varphi \in \mathfrak{a}^*$ and $(\vec{Z_t})_{t \in \mathbb{R}_{\geq 0}}, (\vec{W_t})_{t \in \mathbb{R}_{\geq 0}}$ be a path of vectors such that $\vec{Z_t},\vec{W_t} \to \infty$.
 Then, given $(\rho_t) \in \mathcal{RP}(X_1, X_2)^{\mathbb{R}_{\geq 0}}$ a path of holonomies of convex projective structures with edge invariants $\vec{W_t},\vec{Z_t}$, and assuming $\varphi$ is positive on their limit cone, we get that
   \begin{enumerate}
      \item Let $v_i (t) = (\log (W^i_t Z^{i-1}_t)) \omega_1 + \log (Z^i_t W^{i-1}_t) \omega_2$, $w_i (t) = (\log (Z^i_t W^{i-1}_t)) \omega_1 + (\log (W^i_t Z^{i-1}_t)) \omega_2$, we get that for any $\eta>0$ there is $T>0$ for which
      $$\frac{2 \log 2}{\max_i \{ \varphi (v_i (t)),\varphi (w_i (t)) \}} - \eta \leq h_{\rho_t} (\varphi) \leq \frac{2 \log 2}{\min_i \{ \varphi (v_i (t)),\varphi (w_i (t)) \}} + \eta,$$
 for every $t > T$.
      \item Let $\rho_t \in \mathcal{RP} (X_1, X_2)$, and $\rho_t' \in \mathcal{RP} (X_1', X_2')$ be a path of representations in the constant triple ratio slice, and defined by the same family of cross ratios given by $\vec{Z_t},\vec{W_t}$, then
      $$d_{Th}^\varphi (\rho_t,\rho_t') \to_{t \to \infty} 0.$$
      \item If $\max \{\varphi (v_i (t)),\varphi (w_i (t))\}_{i=1,2,3}/\min \{\varphi (v_i (t)),\varphi (w_i (t))\}_{i=1,2,3}$ is bounded above and below for every $t$, then the Thurston asymmetric metric defined by $\varphi$ is finite.
   \end{enumerate}
\end{thm}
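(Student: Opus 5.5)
The plan is to deduce all three items from the two general theorems above. This requires showing that the holonomies $\rho_t$ are $\varepsilon$-strongly separated for a fixed $\varepsilon>0$ once $t$ is large, with respect to a well-chosen strong Markov coding $\mathcal{G}$ of the free group $\pi_1(\text{pair of pants}) = F_2$ and $\Theta=\{\alpha_1,\alpha_2\}$ (full flags in $\mathbb{R}^3$). The natural generating set comes from the ideal triangulation of the pair of pants into two triangles. The symmetric generating set $S$ should consist of the elements crossing one edge of the triangulation at a time: three generators and their inverses, each attached to one edge. The coding $\mathcal{G}$ records which triangle one is in and forbids immediate backtracking. Under this choice each vertex has out-degree $2$, which is where the constant $2\log 2$ will come from.

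\textbf{Step 1.} Write the Fock--Goncharov normal form for $\rho_t(s)$, $s\in S$. Each generator is a product of a triangle matrix (depending only on $X_1$ or $X_2$) and edge matrices $\mathrm{diag}$-type with entries given by $Z^i_t, W^i_t$. From this product, compute the Cartan projection asymptotically: $\kappa(\rho_t(s)) = v_i(t) + O(1)$ or $w_i(t)+O(1)$, with the $O(1)$ depending only on $X_1,X_2$. I would do this by expressing $\rho_t(s)$ as $k\, a_t\, k'$ with $a_t=\exp(v_i(t))$ and $k,k'$ ranging over a compact set determined by the triple ratios.

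\textbf{Step 2.} Define multicones $M_v\subset\mathcal{F}$ as neighbourhoods of the flags at the vertices of the Farey-type tessellation lying beyond the edge associated with $v$. Verify the finitely many conditions of Definition~\ref{def:separatednested}. The attracting and repelling flags of $\rho_t(s)$ converge to fixed flags determined by the triangle. The contraction of the $\varepsilon$-complement of the Schubert locus into an $\varepsilon$-ball follows from the diverging gaps $\alpha_j(\kappa)\to\infty$, which hold because all $Z^i_t,W^i_t\to\infty$. This is the step I expect to be the main obstacle. One must show that the limiting attracting and repelling flags, and the Schubert loci, stay uniformly transverse to the relevant multicones for every fixed $(X_1,X_2)$, not just for the Fuchsian locus. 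I would first check this via positivity of triples of flags (Fock--Goncharov positivity guarantees transversality) and then use compactness of the normalizing $k,k'$.

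\textbf{Step 3.} Apply the first main theorem, in the strengthened form Theorem~\ref{thm:general+}, so that $h_{\rho_t}(\varphi)\sim h^\kappa_{\rho_t}(\varphi)$. Compute $h^\kappa$ by the spectral-radius formula of Appendix~\ref{app:subshifts}. Because the transition matrix has constant out-degree $2$, bounding $\varphi\circ C$ between $\min_i$ and $\max_i$ of $\varphi(v_i),\varphi(w_i)$ (up to $O(1)$) yields
\[\frac{\log 2}{\max}\lesssim h^\kappa\lesssim\frac{\log 2}{\min}.\]
The factor $2$ then needs checking: a geodesic crossing two edges corresponds to one generator, so $\kappa$ per symbol is half. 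This gives item (1); the additive $O(1)$ errors disappear since $\varphi(v_i)\to\infty$, leaving the $\eta$.

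\textbf{Items (2) and (3).} For item (2), compare $\rho_t$ and $\rho'_t$ on a common coding. By Step 1, their Cartan projections on generators differ by $O(1)$ while diverging. Using Proposition~\ref{coro:limitcones}, which gives length-function control, $L^\varphi_{\rho_t}(\gamma)/L^\varphi_{\rho'_t}(\gamma)\to1$ uniformly in $\gamma$, and the entropies have ratio tending to $1$ by Step 3, so $d_{Th}\to0$. Item (3) is the second main theorem, applied with the strong separation established in Step 2.
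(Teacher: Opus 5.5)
Your architecture matches the paper's: strong separation for large $t$, then Theorem~\ref{thm:general+}, then a spectral-radius count, then length comparison for items (2)--(3). However, two load-bearing steps fail as written.

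First, the coding. Crossing a single edge of the triangulation is not a loop, so these crossings are not elements of $\pi_1(\Sigma)$. Your graph is therefore not a strong Markov structure for $(\Gamma,S)$, and neither Definition~\ref{def:separatednested} nor Theorem~\ref{thm:general+} applies to it. Your Step~1 in fact treats $S$ as the cuffs $a^{\pm1},b^{\pm1},c^{\pm1}$: the quantities $v_i(t),w_i(t)$ combine the two edges adjacent to a cuff, so they are Cartan projections of cuffs, not of single crossings. For that $S$ (with $abc=1$), the coding of Example~\ref{ex:codings} has out-degree $4$, not $2$. Every row of the $6\times 6$ matrix $A$ has four ones, so $2\log 2=\log 4$ comes out directly from bounding the weights $e^{-s\varphi(\kappa)}$ by their extreme values. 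The ``$\kappa$ per symbol is half'' patch is not a valid substitute. With one symbol per crossing, the one-step potential is a single-edge quantity, and it need not be bounded below by $\tfrac12\min_i\varphi(v_i)$: one edge can grow much more slowly than the others. Recovering the stated constants forces you to pass to two-blocks, i.e.\ back to the cuff coding.

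Second, and more seriously, your mechanism for the uniform margin in Step~2 cannot work. Fock--Goncharov positivity gives transversality for each fixed $t$, but no bound independent of $t$, and the limiting configuration really is degenerate. Using the limiting eigenframes:
\begin{itemize}
\item the attracting lines of $\rho_t(b)$ and $\rho_t(a^{-1})$ both converge to $[e_3]$;
\item the attracting line of $\rho_t(c)$ converges to $[e_1-e_2+e_3]$, which lies in the limit of the repelling plane of $\rho_t(b)$, spanned by $(1+X_1)e_3-e_2$ and $e_1-e_2+e_3$.
\end{itemize}
That last pair is harmless only because $bc=a^{-1}$ is not geodesic, so the coding never tests it. You must therefore know the limiting eigenframes explicitly and check exactly the pairs $(\rho_t(u)_-,\rho_t(w)_+)$ with $uw$ geodesic. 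Two further points: compactness of $k,k'$ is vacuous since $K$ is compact, and diverging Cartan gaps alone do not give $\varepsilon$-separation without a uniform gap between $g_+$ and $g_-$.

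The missing input is the upper-triangular form of $E(Z,W)T(X)$. Lemma~\ref{lemma:eigenvalues} gives the exact Jordan projections of the cuffs and the limiting frame $(e_1,e_2,e_3)$ for $a$. This frame is transported by the fixed matrices $T(X_1)^{\pm1}$ to give the frames for $b$ and $c$. Lemma~\ref{lem:estimativasl3} then yields $\varepsilon$-separation of each generator, hence $\kappa=\lambda+O(1)$ by Lemma~\ref{lemma:proximalestimate}. With multicones made of $\varepsilon$-balls around the attracting flags of the generators, invariance reduces to that finite check (Theorem~\ref{thm:divergingpants}).

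Finally, in item (2) the uniform comparison of length functions comes from Lemma~\ref{lemma:lasdistancias}; Corollary~\ref{coro:limitcones} only controls limit cones.
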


One can verify that McMullen's example belongs to the family of examples described above. In that case, the first item is recovering the precise asymptotic to the critical exponent in this setup.

Notice that the second item represents a new, higher-rank phenomenon, as this cannot happen for convex-cocompact hyperbolic structures on the pair of pants.
This is a consequence, for instance, of the spinal metric ribbon graph construction by Bowditch and Epstein (see \cite{bowditch1988natural}).
It can also be deduced from a direct application of our results.

The classical construction of the spinal ribbon graph also provides a geometric interpretation of the third item: the asymmetric metric is incomplete along directions where cuff lengths grow at the same rate. The metric graph obtained as the geometric limit of the normalized pants (so that they have constant area) lies in the completion of the space. This point of view is explored Courtois-Guilloux in \cite{guilloux2024hausdorff}, and Xu \cite{xu2019incompleteness} for the study of the pressure metric.

In the setup of convex projective structures, Parreau \cite{parreau2015invariant} associates to certain families of degenerating structures a piecewise Euclidean surface in a building, which provides asymptotic estimates for the lengths of curves.

In Theorem \ref{thm:divergingpants}, we are able to detect families of representations (that we call shear diverging) that are strongly separated, whose parameters satisfy conditions similar to the ones stated by Parreau.

An interesting direction could be to recast our result in terms of graph-like objects in the symmetric space, aiming at a result similar to Parreau's.


\subsection{Organization of the paper}

 Section \ref{sec:epsilon} reviews linear algebraic estimates that are needed for the proof of the main theorem. Along the way, we establish the necessary background on symmetric spaces. 

Section \ref{sec:thermodynamic} adapts the work of \cite{bochi2019anosov} to the setup of strong Markov partitions, and we show that we can compute the critical exponents from the Busemann potential.

 Section \ref{sec:proof} proves the main result and some variations that can be applied in different contexts.

Section \ref{sec:sl3} explains the Fock-Goncharov parametrization of convex projective structures on the pair of pants and uses the estimates of Section \ref{sec:proof} to obtain asymptotics for the critical exponents of some explicit diverging sequences of representations.
 Appendix \ref{app:subshifts} explains some basic theory on subshifts of finite type, which we need for explicit computations in Section \ref{sec:sl3}.

\subsection*{Acknowledgments}
We are grateful to Gilles Courtois for introducing us to McMullen's example and showing us his perspective using thermodynamic formalism. We are grateful to Giuseppe Martone and Le\'{o}n Carvajales for useful comments. We also thank Martin Bridgeman and Andr\'es Sambarino for their guidance.

Joaqu\'in Lejtreger has received funding from the European Union’s Horizon 2020 research and innovation program under the Marie Sk\l{}odowska-Curie grant agreement No.\ 945332. Joaqu\'in Lema finished this article while in residence at the SLMath Institute in Berkeley, California, supported by the National Science Foundation Grant No. DMS-2424139. The authors acknowledge support of the Institut Henri Poincaré (UAR
839 CNRS-Sorbonne Université), and LabEx CARMIN (ANR-10-LABX-59-01)

%% file: sections/epsilon.tex
\section{North-south dynamics in flag manifolds}
\label{sec:epsilon}

In this section, we introduce the notion of $\varepsilon$-separated $\Theta-$loxodromic elements in $G$ and prove an estimate which will be fundamental to our main result (Proposition \ref{coro:estimatesgeneralcase}).
Along the way, we review some Lie theoretic background/notations that we will use in this paper.

\subsection{Proximal elements}
\label{subsec:proximal}
We assume $V$ is a finite-dimensional, real vector space.
A linear endomorphism $g \in \End (V)$ is said to be \emph{proximal} if it has a simple eigenvalue of maximal modulus, that we denote as $\lambda_1 (g)$.
We also denote by $g_+ \in \mathbb{P} (V)$ the eigenspace associated to $\lambda_1 (g)$, and by $g_- \in \mathbb{P} (V^*)$ the unique $g-$invariant hyperplane complementary to $g_+$.

Recall that an inner product $\langle \cdot,\cdot \rangle$ on $V$ induces a metric in $\mathbb{P} (V)$ by declaring that $\cos (d(p_1,p_2)) = \vert \langle v_{p_1},v_{p_2}\rangle \vert$, where $v_{p_i} \in p_i$ have unit norm.
We will fix a choice of inner product for the remainder of the section.
Given $g \in \End (V)$ proximal, we will denote the open $\varepsilon$-ball around $g_+$ as $\mathcal{U}^+ (g,\varepsilon)$, and by $\mathcal{U}^- (g,\varepsilon)$ the complement of the $\varepsilon$-open neighborhood of $g_-$ in $\mathbb{P} (V)$.

\begin{defi}
 We will say that a proximal $g \in \End (V)$ is \emph{$\varepsilon$-separated} if
    \begin{itemize}
        \item $d (g_+,g_-) > 2 \varepsilon.$
        \item $g (\mathcal{U}^- (g,\varepsilon)) \subset \mathcal{U}^+ (g,\varepsilon)$.
    \end{itemize}
\end{defi}

\begin{rmk}
 This is a slight weakening of the notion of $\varepsilon$-proximality introduced by Benoist in \cite{benoist_actions_1996}.
 Many of the estimates we include here already appear in Benoist's work in some form, with the main difference being that we want effective constants in our estimates.
\end{rmk}

Denoting by $*$ the adjoint induced by our choice of inner product, for every $g \in \End(V)$ the transformation $g^* g$ is self-adjoint, and therefore, diagonalizable in an orthonormal basis with non-negative eigenvalues.
Their square-roots, ordered as $\sigma_1 (g) \geq \sigma_2 (g) \geq \ldots$ are called the \emph{singular values} of $g$.
The eigenvector associated with $\sigma_i^2 (g)$ is called the (right) $i-$th singular vector.

We will denote as $s (g)$ a first singular vector, and $S(g)$ its orthogonal complement (spanned by the remaining singular vectors).
Notice that the first singular value $\sigma_1 (g)$ is simply $\norm{g} = \sup_{v \in V \setminus \{0\}} \frac{\norm{gv}}{\norm{v}}$.

We will soon mention that singular values are related to distances in symmetric spaces, which often differ from the eigenvalue data of $g$.
The following is a classical lemma of linear algebra, the proof can be found, for example, in the appendix A of \cite{bochi2019anosov}.

\begin{lem}
\label{lemma:bps}
 Let $g \in \End (V)$, then for every $v \in V$ we have
    $$\|gv\| \ge \|g\|\|v\|\sin(\angle v, S(g)).$$
\end{lem}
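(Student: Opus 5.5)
The plan is to diagonalize $g^*g$ and compare $\|gv\|$ with its component along the top singular direction. Fix an orthonormal basis $e_1=s(g), e_2,\dots,e_d$ of right singular vectors, so that $g^*g\, e_i=\sigma_i(g)^2 e_i$ and $S(g)=\mathrm{span}(e_2,\dots,e_d)$. The vectors $ge_i$ are then pairwise orthogonal, since $\langle ge_i,ge_j\rangle=\langle g^*g e_i,e_j\rangle=\sigma_i^2\delta_{ij}$, and $\|ge_i\|=\sigma_i(g)$.

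Writing $v=\sum_i v_i e_i$, the first step is the orthogonal decomposition
$$\|gv\|^2=\sum_i \sigma_i(g)^2 v_i^2\ \ge\ \sigma_1(g)^2 v_1^2=\|g\|^2 v_1^2 .$$
Here we use the fact, recalled before the statement, that $\sigma_1(g)=\|g\|$.

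The second step identifies $|v_1|$ geometrically. Because $e_1$ is a unit normal to the hyperplane $S(g)$, the distance from $v$ to $S(g)$ equals $|\langle v,e_1\rangle|=|v_1|$. The angle between $v$ and the subspace $S(g)$ is the minimal angle to vectors of $S(g)$, so its sine is this distance divided by $\|v\|$:
$$\sin(\angle v,S(g))=\frac{|v_1|}{\|v\|}.$$

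Combining the two steps gives $\|gv\|\ge \|g\|\,|v_1|=\|g\|\|v\|\sin(\angle v,S(g))$, which is the claimed inequality. The case $v=0$ is trivial. The only point needing care is the convention for the angle to a subspace, namely that it is the minimal angle, so that its sine equals the normalized distance. When the top singular value is not simple, $s(g)$ is a choice, and the argument works for any such choice because it only uses $g^*g\,s(g)=\|g\|^2 s(g)$ together with the invariance of $S(g)$ under $g^*g$. There is no real obstacle beyond this bookkeeping.
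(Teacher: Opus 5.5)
Your proof is correct. Expanding $v$ in an orthonormal eigenbasis of $g^*g$ that contains $s(g)$ gives $\|gv\|\ge \|g\|\,|\langle v,s(g)\rangle| = \|g\|\|v\|\sin(\angle v,S(g))$, and your remarks about the minimal-angle convention and a non-simple top singular value are handled correctly. The paper does not prove this lemma itself but cites Appendix A of Bochi--Potrie--Sambarino, where it comes from the same singular value decomposition argument, so your route is essentially the standard one the paper relies on.
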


We get the following crucial consequence:

\begin{lemma}
\label{lemma:proximalestimate}
 Given $\varepsilon >0$, any $\varepsilon$-separated proximal transformation $g$ verifies
 $$\sin (\varepsilon) \norm{g} \leq \lambda_1 (g) \leq \norm{g},$$
 \begin{proof}
 Notice that the bound $\frac{\norm{gv}}{\norm{v}} \leq \norm{g}$ is automatic.
 As for the other inequality, we will use Lemma \ref{lemma:bps}.

 Observe that the transformation $g^*$ is proximal with attracting eigenline $g_-^\perp$, repelling hyperplane $g_+^\perp$, and $\lambda_1 (g^*) = \lambda_1 (g)$.
 Moreover, $\mathcal{U}^+ (g^*, \varepsilon) \subset \mathcal{U}^- (g,\varepsilon)$ because $d (g_-^\perp, g_-) = \frac{\pi}{2}$.
 Particularly, $g^* g$ sends $\mathcal{U}^- (g,\varepsilon)$ into $\mathcal{U}^+ (g^*,\varepsilon)$, and has a fixed point in $\mathcal{U}^+ (g^*,\varepsilon)$.
 
 There must be a unique fixed point in $\mathcal{U}^+ (g^*,\varepsilon)$. Otherwise, $g^* g$ would fix another point corresponding to another singular vector, or it would fix a line. 
 The first case cannot occur, since another singular vector would lie $\frac{\pi}{2}$ away from the first one, contradicting the fact that both lie in an $\varepsilon$-ball. 
 Similarly, the second case contradicts that $g^*g (\mathcal{U}^- (g,\varepsilon)) \subset \mathcal{U}^+ (g^*,\varepsilon)$.
 This shows that $g^* g$ has a unique eigenvalue of maximal modulus in $\mathcal{U}^+ (g^*,\varepsilon)$.

 Notice that given $v \in g_+$, we get that:
        $$d (v, S(g)) = \frac{\pi}{2} - d(v,s(g)) \geq d(v, g_-) - \varepsilon > \varepsilon ,$$
 where we used the triangle inequality $d(v,s(g)) \leq \frac{\pi}{2} - d(v,g_-) + d (g_-^\perp, s(g))$, the fact that $s(g) \in B(g_-^\perp,\varepsilon)$ and $d(g_+,g_-) > 2\varepsilon$.
 Lemma \ref{lemma:bps} concludes the proof.
\end{proof}
\end{lemma}

On the other hand, we get the following:

\begin{lemma}
\label{lemma:agglomerated}
 Given $\varepsilon > 0$, and $g \in \End (V)$ a proximal element, we get that
 $$\sin (\varepsilon) \lambda_1 (g) \leq \frac{\norm{gv}}{\norm{v}},$$
 for any $v$ with $[v] \in \mathcal{U}^- (g,\varepsilon)$.
\end{lemma}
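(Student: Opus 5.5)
The idea is to avoid singular values altogether and work with the linear functional that cuts out the repelling hyperplane $g_-$. Because $g_-$ is $g$-invariant, this functional is an eigenvector of the transpose of $g$ with eigenvalue $\lambda_1(g)$. That yields an exact identity, and the estimate follows by comparing $\norm{gv}$ with one coordinate of $gv$. No separation hypothesis on $g$ is needed, only proximality.

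\textbf{Step 1: the functional.} Let $n \in V$ be a unit vector orthogonal to the hyperplane $g_-$, and set $\phi(x) = \langle n, x\rangle$. Then $\ker \phi = g_-$. Since $g(g_-) \subseteq g_-$, the functional $\phi\circ g$ also vanishes on $g_-$, so $\phi \circ g = c\,\phi$ for some scalar $c$. To identify $c$, evaluate both sides on a nonzero $u \in g_+$. We have $gu = \lambda u$, where $\lambda$ is the top eigenvalue, $|\lambda| = \lambda_1(g)$. This gives $c\,\phi(u) = \phi(gu) = \lambda\,\phi(u)$. Moreover $\phi(u) \neq 0$ because $g_+ \not\subset g_-$ by complementarity. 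Hence $c = \lambda$, and for every $v \in V$,
$$\langle n, gv\rangle = \lambda \langle n, v\rangle .$$

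\textbf{Step 2: relating $\langle n,v\rangle$ to the distance to $g_-$.} With the metric $\cos d(p_1,p_2) = |\langle v_{p_1}, v_{p_2}\rangle|$, the distance from $[v]$ to the projectivized hyperplane $g_-$ is the angle between $v$ and that hyperplane. By orthogonal decomposition of $v$ along $n$ and $g_-$, its sine equals $|\langle n, v\rangle|/\norm{v}$. This is the one point where the normalization of the metric has to be checked, though it is routine.

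For $[v] \in \mathcal{U}^-(g,\varepsilon)$ we have $d([v], g_-) \ge \varepsilon$. We may assume $\varepsilon \le \pi/2$, since otherwise the set is empty. Then $|\langle n,v\rangle| \ge \sin(\varepsilon)\norm{v}$.

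\textbf{Step 3: conclusion.} By Cauchy--Schwarz and Step 1,
$$\norm{gv} \ge |\langle n, gv\rangle| = \lambda_1(g)\,|\langle n, v\rangle| \ge \sin(\varepsilon)\,\lambda_1(g)\,\norm{v},$$
which is the claimed inequality.

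I expect no real obstacle. The only delicate points are the identification $c = \lambda$ in Step 1, which uses $g_+ \not\subset g_-$, and the elementary angle computation in Step 2.
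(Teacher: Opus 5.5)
Your proof is correct and is essentially the paper's argument. The paper writes $v = v_+ + v_-$ along $g_+ \oplus g_-$ and projects orthogonally onto $g_-^\perp = \mathbb{R}n$; since $g v_- \in g_-$, this gives $\norm{gv} \geq \lambda_1(g)\norm{\pi v_+} = \lambda_1(g)\sin(\angle v, g_-)\norm{v}$, which is your identity $\langle n, gv\rangle = \lambda \langle n, v\rangle$ with $\pi = \langle n,\cdot\rangle\, n$, and your eigen-functional formulation simply replaces the paper's two law-of-sines steps with one line.
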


This will be a consequence of the following:

\begin{lem}
\label{lem:lawofsines}
 Let $p: V \to V$ a linear map such that $\dim p(V) =1$, and $\ker p$ is transverse to $p(V)$, then
    $$\frac{\norm{pv}}{\norm{v}} = \frac{\sin (\angle v, \ker p)}{\sin (\angle p(V), \ker p)}.$$
    \begin{proof}
 Given $v \in V$, up to restricting to $\mathbb{R} v \oplus \text{Im} p$, it is enough to assume that $\dim V = 2$.
 Notice that the origin, $p v$ and $v$ form a triangle such that the third side is parallel to $\ker p$.
 The law of sines produces the desired result.
    \end{proof}
\end{lem}

\begin{proof}[Proof of Lemma \ref{lemma:agglomerated}]
 Given $v \in \mathcal{U}^-(g,\varepsilon)$, we can split it as $v = v_+ + v_-$, where $v_+ \in g_+$ and $v_- \in g_-$.
 Notice that
 $$\norm{gv} = \norm{\lambda_1 (g) v_+ + g v_-} \geq \norm{ \lambda_1 (g) \pi v_+},$$
 where $\pi$ is the orthogonal projection onto $g_-^\perp$.
 Applying the previous lemma twice, we get that:
 
 $$\norm{\pi v_+} = \sin (\angle v_+, g_-) \norm{v_+} = \sin (\angle v, g_-) \norm{v},$$

 where we used that $v \to v_+$ is the image of a projection with image $g_+$ and kernel $g_-$.
 Putting everything together, we obtain the desired result.
\end{proof}

Combining Lemmas \ref{lemma:proximalestimate} and \ref{lemma:agglomerated}, we get the following corollary:

\begin{coro}
\label{lemma:busemannprox}
 Given $\varepsilon > 0$, and $g \in \End (V)$ $\varepsilon$-separated and proximal, we get that
 $$\sin^2 (\varepsilon) \norm{g} \leq \frac{\norm{gv}}{\norm{v}} \leq \norm{g},$$
 for any $v$ with $[v] \in \mathcal{U}^- (g,\varepsilon)$.
\end{coro}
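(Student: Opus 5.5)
The upper bound is immediate: for any $v \neq 0$, the definition of the operator norm gives $\norm{gv}/\norm{v} \leq \norm{g}$. No hypothesis on $[v]$ or on separation is used here.

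For the lower bound I would chain the two preceding lemmas. Take $v$ with $[v] \in \mathcal{U}^-(g,\varepsilon)$. Since $g$ is proximal, Lemma \ref{lemma:agglomerated} applies to $v$ and gives
$$\sin(\varepsilon)\,\lambda_1(g) \leq \frac{\norm{gv}}{\norm{v}}.$$
Since $g$ is moreover $\varepsilon$-separated, Lemma \ref{lemma:proximalestimate} gives $\sin(\varepsilon)\norm{g} \leq \lambda_1(g)$. Multiplying this by $\sin(\varepsilon)$ and substituting into the previous display yields $\sin^2(\varepsilon)\norm{g} \leq \norm{gv}/\norm{v}$, which is the claimed bound.

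The two lemmas use the same neighborhood $\mathcal{U}^-(g,\varepsilon)$ and the same $\varepsilon$, so no change of constants is needed. I also want $\sin(\varepsilon) > 0$, so that multiplying the inequality by it is legitimate. This holds because $d(g_+,g_-) > 2\varepsilon$ together with the bound $\pi/2$ on the diameter of $\mathbb{P}(V)$ forces $\varepsilon < \pi/4$.

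I expect no real obstacle, since all the work is done in Lemmas \ref{lemma:proximalestimate} and \ref{lemma:agglomerated}. The only step needing care is confirming that the $\lambda_1(g)$ produced by Lemma \ref{lemma:agglomerated}, namely the modulus of the top eigenvalue from the splitting $v = v_+ + v_-$, is the same quantity bounded below in Lemma \ref{lemma:proximalestimate}. Both lemmas use the same definition of $\lambda_1(g)$, so this holds.
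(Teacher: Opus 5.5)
Your proposal is correct and is exactly the paper's argument: the paper obtains this corollary by combining Lemma~\ref{lemma:agglomerated} ($\sin(\varepsilon)\lambda_1(g) \leq \norm{gv}/\norm{v}$ on $\mathcal{U}^-(g,\varepsilon)$) with Lemma~\ref{lemma:proximalestimate} ($\sin(\varepsilon)\norm{g} \leq \lambda_1(g)$), and the upper bound is just the definition of the operator norm. Your remark that $d(g_+,g_-) > 2\varepsilon$ forces $\varepsilon < \pi/4$, so $\sin(\varepsilon) > 0$, is a harmless extra check.
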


This simple estimate is the heart of the proof of our main theorem
We close this section with a criterion to verify $\varepsilon$-separation.

\begin{lemma}
    \label{lem:implicaproximal}
 Given $g \in \End (V)$ a proximal transformation verifying $d(g_+,g_-) > 2\varepsilon$ then:
    $$\sin (\angle gv,g_+) \leq \frac{\norm{g|_{g_-}}}{\lambda_1 (g)} \frac{1}{\sin (\varepsilon)}.$$
 In particular, if $\frac{\norm{g|_{g_-}}}{\lambda_1 (g)} \leq \sin^2 (\varepsilon),$ then $g$ is $\varepsilon$-separated.
    \begin{proof}
 Notice that splitting $v \in V$ as $v = v_+ + v_-$, for $v_+ \in g_+$ and $v_- \in g_-$, we get that: 
        $$\norm{v_+} \sin (\angle v, g_+) = \sin (\angle v,g_-) \norm{v_-},$$
 by Lemma \ref{lem:lawofsines}. 
        
 In particular, $\frac{\norm{v_+}}{\norm{v_-}} \geq \sin \varepsilon$ if $[v] \in \mathcal{U}^- (g,\varepsilon)$.
 Applying $g$ to $v$ as above, we get that:

        $$\frac{\sin (\angle gv,g_-)}{\sin (\angle gv,g_+)} = \frac{\norm{(gv)_+}}{(gv)_-} \geq \frac{\lambda_1 (g)}{\norm{g|_{g_-}}} \frac{\norm{v_+}}{\norm{v_-}} \geq \frac{\lambda_1 (g)}{\norm{g|_{g_-}}} \sin \varepsilon.$$
 This implies the desired result.
    \end{proof}
\end{lemma}

\subsection{Symmetric spaces and Anosov representations}
\label{subsec:symmetric}

In this subsection, we will explain some generalities about Lie groups and symmetric spaces.
The proofs of all the stated results can be found in \cite[Chapter 6]{benoist2016random}.
For us, a Lie group $G$ will always be semisimple, real algebraic, and with non-compact factors. 
We denote by $\mathfrak{g}$ its Lie algebra.

\subsubsection{Cartan involutions and maximal compact subgroups}
The Killing form is a symmetric, bilinear, non-degenerate form given by $B (x,y) = \trace (ad (x) \circ ad (y)),$ for $x,y \in \mathfrak{g}$.
A \emph{Cartan involution} is an automorphism $o: \mathfrak{g} \to \mathfrak{g}$ with the property that $- B (o\cdot,\cdot)$ is positive definite.

Such an $o$ splits $\mathfrak{g} = \mathfrak{k}^o \oplus \mathfrak{p}^o$, where $\mathfrak{k}^o$ and $\mathfrak{p}^o$ are the plus and minus one eigenspaces of $o$, respectively.
In particular, $\mathfrak{k}^o$ is a subalgebra of $\mathfrak{g}$, and integrates to $K^o$ a connected Lie group.
Such a subgroup is a maximal connected compact subgroup of $G$, and one can show that any subgroup with these properties is conjugated to $K^o$ (in particular, its Lie algebra is the fixed point of a Cartan involution).

\subsubsection{Root space decomposition and Weyl chambers}

We define a \emph{Cartan subspace} as a maximal abelian subalgebra $\mathfrak{a} \subseteq \mathfrak{p}^o$.
The elements of $\mathfrak{p}^o$ are self-adjoint for $-B(o\cdot, \cdot)$, and thus diagonalizable. Particularly, $\mathfrak{a}$ acts simultaneously diagonally in $\mathfrak{g}$ via the adjoint representation. Therefore, $\mathfrak{g}$ decomposes as
$$\mathfrak{g} = \mathfrak{g}_0 \oplus \bigoplus_{\alpha \in \Sigma} \mathfrak{g}_\alpha,$$
where, for $\alpha \in \mathfrak{a}^*$, $\mathfrak{g}_\alpha = \{ x \in \mathfrak{g} : [h,x] = \alpha(h) x\: \forall h \in \mathfrak{a}\}.$
The set of \emph{restricted roots} $\Sigma$ is defined as $$\Sigma = \{\alpha \in \mathfrak{a}^* \setminus \{0\} : \mathfrak{g}_\alpha \neq \{0\}\}.$$
The spaces $\mathfrak{g}_\alpha$ are called the \emph{root spaces}.

The closures of connected components of
$\mathfrak{a} \setminus \bigcup_{\alpha \in \Sigma} \ker \alpha$ are called \emph{Weyl chambers}. 
We choose from now on a Weyl chamber and denote it by $\mathfrak{a}^+$. 
Moreover, the interior of $\mathfrak{a}^+$ will be denoted $\mathfrak{a}^{++}$.
Such a choice of $\mathfrak{a}^+$ induces an order in $\Sigma$ by defining the \emph{set of simple roots} $\Pi = \{ \alpha \in \Sigma : \ker \alpha \cap \mathfrak{a}^+ \neq \{0\}\}$. Positive linear combinations of simple roots are called \emph{positive roots}.
The \emph{Weyl group} $W$ of $G$ is defined as the group generated by the orthogonal reflections in the subspaces $\{\ker \alpha: \alpha \in \Sigma\}$. 

There is a single longest element in the Weyl group denoted by $w_0$. The opposition involution is defined via
$$\iota: \mathfrak{a} \to \mathfrak{a}, \qquad \iota(v) = -w_0(v).$$

It is the only involution preserving $\mathfrak{a}^+$ and sending a simple root to the opposite. In $SL_n(\RR)$, it coincides with $\iota(a_1, \ldots, a_n) = (-a_n, \ldots, -a_1)$.

\subsubsection{Parabolic subalgebras}

Given $\Theta \subset \Pi$, we will define a pair of \emph{parabolic $\Theta$-opposite subalgebras} by
$$
\begin{aligned}
\mathfrak{p}_\Theta = \mathfrak{g}_0 \oplus \bigoplus_{\alpha \in \Sigma^+} \mathfrak{g}_\alpha \oplus \bigoplus_{\alpha \in \langle \Pi - \Theta \rangle} \mathfrak{g}_{-\alpha}, & \: \check{\mathfrak{p}}_\Theta = \mathfrak{g}_0 \oplus \bigoplus_{\alpha \in \Sigma^+} \mathfrak{g}_{-\alpha} \oplus \bigoplus_{\alpha \in \langle \Pi - \Theta \rangle} \mathfrak{g}_{\alpha}
\end{aligned}
$$

We denote by $P_\Theta$ and $\check{P}_\Theta$ their respective Lie groups, which we call \emph{opposite parabolic subgroups}.

The quotient space $\mathcal{F}_\Theta = G/P_\Theta$ is called the \emph{flag manifold} associated to $P_\Theta$, and we call $\check{\mathcal{F}_\Theta}$ the opposite flag manifold. One can show that the action of $K$ in $\mathcal{F}_\Theta$ is transitive.   

\subsubsection{The symmetric space of $G$}
\label{subsubsec:symmetric}

All the objects above have geometric interpretations in terms of the symmetric space of $G$.
We will define the \emph{symmetric space} $\mathbb{X}$ of $G$ as the set of Cartan involutions of $\mathfrak{g}$. 
Fixing a basepoint $o \in \mathbb{X}$, one can identify $\mathbb{X}$ with the coset space $G/K$, for $K = K^o$.
The Killing form on $\mathfrak{g}$ induces a non-positively curved Riemannian metric on $\mathbb{X}$.

A choice of a Cartan subspace $\mathfrak{a} \subset \mathfrak{p}^o$ integrating to $A$ defines a submanifold $A K \subset G/K$.
These submanifolds are totally geodesic, have zero curvature, and they are maximal with respect to these properties.
We will call them \emph{maximal flats} (or just flats).
Notice that flats are identified with $\mathfrak{a}$ via the exponential map, and therefore, Weyl chambers get identified with a region of this submanifold.

One can prove that for any $x \in \mathbb{X}$, its $K$ orbit intersects a chosen Weyl chamber in a unique point.
This is equivalent to the fact that any $g \in G$ belongs to:
$$g \in K \exp (\kappa (g)) K,$$
for a uniquely defined $\kappa (g)\in \mathfrak{a}^+$.
We will call $\kappa: G \to \mathfrak{a}^+$ the \emph{Cartan projection}.
Notice that $\norm {\kappa (g)} = d (o,go)$, for the Riemannian norm.

There is a ``stabilized'' version of the Cartan projection called the \emph{Jordan} projection $\lambda :G \to \mathfrak{a}^+$ defined via
$$\lambda (g) = \lim_n \frac{1}{n} \kappa (g^n).$$
This can also be characterized as the unique element of $\mathfrak{a}^+$ such that the hyperbolic piece of $g$ in the Jordan decomposition is conjugated to $\exp(\lambda (g))$.

Any ray based at $o \in \mathbb{X}$ and converging to $\xi \in \partial \mathbb{X}$ lies in the $K$-orbit of an element of our model Weyl chamber defined by $\mathfrak{a}^+$.
Rays in our model Weyl chamber are in correspondence with points of the projectivization $\mathbb{P} (\mathfrak{a}^+)$ of the cone $\mathfrak{a}^+$.
We will decompose $\mathbb{P} (\mathfrak{a}^+)$ along $\Theta-$\emph{facets} defined as the projectivization of the intersection of:
$$\mathfrak{a}_\Theta = \bigcap_{\alpha \in \Pi \setminus \Theta} \ker \alpha,$$
with $\mathfrak{a}^+$, where $\Theta \subset \Pi$ is a subset of simple roots.
An \emph{open facet} will be the set of points in the $\Theta-$facet, whose rays are disjoint from $\mathfrak{a}_{\Theta'}$ for $\Theta' \subset \Theta$.
One can show that the stabilizer of a ray in an open facet is precisely $P_\Theta$, the parabolic group introduced earlier.

We will denote by $p_\Theta : \mathfrak{a} \to \mathfrak{a}_\Theta$ the orthogonal projection.
We can use it to define restricted Cartan and Jordan projections $\kappa_\Theta = p_\Theta \circ \kappa$, and $\lambda_\Theta = p_\Theta \circ \lambda$.

Given a point in $\partial \mathbb{X}$ lying in the open facet defined by $\Theta = \Pi$, and $g \in G$, one can show that the horosphere based at $\xi$ passing through $g K$ intersects the Weyl chamber in a unique point.
From a Lie group perspective, this is equivalent to the fact that $G$ is diffeomorphic to $K \times \exp \mathfrak{a} \times U$, for $U$ the unipotent radical of the parabolic $P = P_\Pi$.
In particular, given $\xi \in G/P$, we know that $\xi = k P$ for some $k \in K$, we can then define $\sigma : G \times G/P \to \mathfrak{a}$ via
$$g k \in K \exp \sigma (g,k) U.$$
This function is called the \emph{Iwasawa cocycle}, and was defined by Quint in \cite{quint2002patterson} (see also \cite{benoist2016random}).
The nomenclature follows from the fact that
$$\sigma (g g',\xi) = \sigma (g, g'\xi) + \sigma (g',\xi).$$
Given any $x \in \mathfrak{a}^{++}$, one can prove that
$$\sigma (g,\xi) = \lim_{t\to \infty } \kappa (gk e^{tx}) - tx,$$
where $\xi = k P$ as before.
This relates the Iwasawa cocycle with the Busemann functions on the symmetric space.

Given $\Theta \subset \Pi$, we will refer to the \emph{partial Iwasawa cocycle} $\sigma_\Theta : G \times G/P_\Theta \to \mathfrak{a}$ as $\sigma_\Theta = p_\Theta \circ \sigma$.
It is easy to show that this is also a cocycle.

\subsubsection{Fundamental representations}

Many computations in the symmetric space are reduced to elementary linear algebra by using fundamental representations.
Recall that for a given simple root $\alpha \in \Pi$, there exists a unique $\omega_\alpha \in \mathfrak{a}^*$ such that $\omega_\alpha (h_\beta) = \delta_{\alpha \beta}$, for $\beta \in \Pi$ and $h_\beta$ a normalized co-root with respect to the inner product induced by the Killing form.
We say that $\omega_\alpha$ is the \emph{fundamental weight} associated to $\alpha$.
These form a basis for $\mathfrak{a}^*$. 
Moreover, each fundamental weight appears as the highest weight vector of an irreducible representation (up to covers).

\begin{thm}[Tits]
 For every $\alpha \in \Pi$, there exists an irreducible representation $\rho_\alpha : G \to GL (V^\alpha)$ with highest weight vector $\overline{\omega_\alpha}$ being an integer multiple of $\omega_\alpha$.
\end{thm}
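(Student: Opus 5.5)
The plan is to derive the statement from Cartan–Weyl highest weight theory for the complexification $\mathfrak{g}_\mathbb{C}$, and then descend to $\RR$ and to $G$.

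First, extend $\mathfrak{a}$ to a maximally split Cartan subalgebra $\mathfrak{h} = \mathfrak{t} \oplus \mathfrak{a}$ of $\mathfrak{g}$, where $\mathfrak{t}$ is a maximal abelian subalgebra of the centralizer of $\mathfrak{a}$ in $\mathfrak{k}^o$. Let $\Delta$ be the root system of $(\mathfrak{g}_\mathbb{C}, \mathfrak{h}_\mathbb{C})$. I would fix an ordering on $\Delta$ compatible with $\mathfrak{a}^+$: first order by the value on a vector of $\mathfrak{a}^{++}$, then break ties on $i\mathfrak{t}$. Then every positive root restricts to $\mathfrak{a}$ either as $0$ or as an element of $\Sigma^+$. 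Each simple restricted root $\alpha \in \Pi$ is the restriction of the simple roots $\beta$ in a single orbit of the Satake involution, up to the black (compact) nodes. For $\beta \in \Delta$ simple, let $\varpi_\beta$ denote the complex fundamental weight.

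Second, I would build a candidate highest weight. Set $\lambda$ on $\mathfrak{h}_\mathbb{C}$ by $\lambda|_{\mathfrak{a}} = \omega_\alpha$ and $\lambda|_{\mathfrak{t}} = 0$. Using the Satake diagram, I would check that $\lambda$ is a positive rational combination of the $\varpi_\beta$, summed over the white nodes $\beta$ lying over $\alpha$. The two facts needed are the following.
\begin{itemize}
\item Black nodes pair to zero with $\lambda$, since their coroots lie in $i\mathfrak{t}$.
\item For a white node $\beta$, the $\mathfrak{a}$-part of the coroot $\beta^\vee$ is a positive multiple of the restricted coroot $h_{\alpha'}$, where $\alpha' = \beta|_{\mathfrak{a}}$.
\end{itemize}
These give $\langle \lambda, \beta^\vee \rangle \geq 0$, with equality unless $\beta|_{\mathfrak{a}} = \alpha$. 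Hence some integer multiple $k\lambda$ is dominant integral. After enlarging $k$, $k\lambda$ is also a character of the maximal torus of the algebraic group, or else we pass to a finite cover, as the statement allows. Let $V_\mathbb{C}$ be the irreducible $\mathfrak{g}_\mathbb{C}$-module of highest weight $k\lambda$. Because $\lambda$ is invariant under the Galois conjugation defining the real form ($\lambda$ is real on $\mathfrak{a}$ and vanishes on $\mathfrak{t}$), $V_\mathbb{C}$ is isomorphic to its conjugate. So either $V_\mathbb{C}$ has a real form $V^\alpha$, or, in the quaternionic case, we replace $k$ by $2k$ or take the realification.

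Third, I would restrict to $\mathfrak{a}$. The $\mathfrak{a}$-weights of $V^\alpha$ are the restrictions of the $\mathfrak{h}$-weights of $V_\mathbb{C}$, all of the form $k\lambda - (\text{positive roots})$, whose restrictions are $\leq k\omega_\alpha$. So the highest restricted weight is $\overline{\omega_\alpha} = k\omega_\alpha$. Irreducibility over $\RR$ follows since any invariant real subspace complexifies to a $G$-invariant subspace of $V_\mathbb{C}$; in the realified case one uses $\operatorname{End}_G(V^\alpha) \cong \HH$. In either case the restricted highest weight is still $k\omega_\alpha$.

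I expect the main obstacle to be the Satake-diagram bookkeeping in the second step. This means verifying that the $\mathfrak{a}$-component of each white coroot is proportional to the correct restricted coroot. It also means handling the non-reduced case ($2\alpha \in \Sigma$) and the normalization of $h_\alpha$, so that $\lambda$ is genuinely dominant rational. If this gets heavy, I would instead quote the classification of Satake diagrams together with the standard fact (Borel–Tits) that $\sum_{\beta \mapsto \alpha} \varpi_\beta$ restricts to a positive multiple of $\omega_\alpha$. The integrality needed to make the representation factor through $G$ itself is the other delicate point, and I would handle it by taking a further integer multiple.
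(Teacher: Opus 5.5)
The paper gives no proof of this statement. It quotes it as a theorem of Tits and refers to Benoist--Quint (Chapter 6) for the background results, so your argument supplies a proof the paper does not contain.

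Your route is the classical one, and it is correct:
complexify, extend $\omega_\alpha$ by zero on $\mathfrak{t}$, check dominance on the Satake diagram, take the irreducible $\mathfrak{g}_\mathbb{C}$-module of highest weight $k\lambda$, and descend to $\RR$. It is self-contained and identifies $V^\alpha$ by its complex highest weight, at the cost of Satake, Galois and integrality bookkeeping.

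A standard alternative stays entirely real. Let $\mathfrak{n}_\alpha$ be the nilradical of $\mathfrak{p}_{\{\alpha\}}$ and $d=\dim\mathfrak{n}_\alpha$, and take the $G$-span of the line $\Lambda^{d}\mathfrak{n}_\alpha\subset\Lambda^{d}\mathfrak{g}$. Its $\mathfrak{a}$-weight is the sum of the restricted roots of $\mathfrak{n}_\alpha$ counted with multiplicity. This sum is $W_{\Pi\setminus\{\alpha\}}$-invariant, hence a positive integer multiple of $\omega_\alpha$. The span is generated by a highest weight vector whose weight space is a line, so complete reducibility makes it irreducible. This gives a representation of $G$ itself, with a one-dimensional highest weight space by construction.

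A few points in your sketch should be made explicit.
\begin{itemize}
\item The step you flag as the main obstacle is immediate. Since $B(\mathfrak{a},\mathfrak{t})=0$, the $\mathfrak{a}$-component of $\beta^\vee$ is $\frac{(\bar\beta,\bar\beta)}{(\beta,\beta)}\,h_{\bar\beta}$ with $\bar\beta=\beta|_{\mathfrak{a}}$. This holds whatever the normalization and whether or not $2\bar\beta\in\Sigma$.
\item Your two bullets give $\langle\lambda,\beta^\vee\rangle\geq 0$ but not rationality. For that, let $\sigma$ be the conjugation of $\mathfrak{g}_\mathbb{C}$ fixing $\mathfrak{g}$; then $\sigma\beta\in\Delta$, and $\bar\beta$ extended by $0$ on $\mathfrak{t}$ equals $\frac12(\beta+\sigma\beta)$. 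Hence $\frac{(\bar\beta,\bar\beta)}{(\beta,\beta)}=\frac12+\frac14\langle\sigma\beta,\beta^\vee\rangle\in\frac14\ZZ$, and this is $\langle\lambda,\beta^\vee\rangle$ when $\bar\beta=\alpha$ and $h_\alpha$ is the usual coroot.
\item Since $\sigma$ sends positive compact roots to negative ones, the isomorphism $\overline{V_\mathbb{C}}\cong V_\mathbb{C}$ needs one more line. The weight $k\lambda=\sigma(k\lambda)$ is an extremal weight of $\overline{V_\mathbb{C}}$ and is dominant, so it is the highest weight.
\item The quaternionic branch never arises. Because $\langle k\lambda,\beta^\vee\rangle=0$ for every black $\beta$, the highest weight vector $v$ is killed by all negative compact root vectors. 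So the $k\omega_\alpha$-weight space of $V_\mathbb{C}$ is $\mathbb{C}v$. An equivariant antilinear $J$ preserves this line, so $J^2$ is a positive scalar and $V_\mathbb{C}$ is of real type.
\item Record this one-dimensionality explicitly. It is exactly what the paper uses right afterwards: the highest weight line $L^\alpha$, the maps $\iota^\alpha$, and proximality of $\rho_\alpha(g)$.
\item Restricting a $\mathbf{G}(\mathbb{C})$-module to $G$ uses that $G$ acts on $\mathfrak{g}_\mathbb{C}$ by inner automorphisms, for instance $G$ of finite index in the real points of a Zariski-connected group. This is a standing hypothesis, not something a cover or a larger $k$ can supply. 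Take $O(n,n)$ with $n\geq 3$ and $\alpha$ a spin node: elements of determinant $-1$ exchange the two spin fundamental weights, and the statement fails there.
\end{itemize}
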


Given $o \in \mathbb{X}$, each vector space $V^\alpha$ can be equipped with a $K^o$-invariant inner product which we call \emph{adapted}.
The following lemma will be key for the upcoming computations:

\begin{lemma}[Lemma 5.32 of \cite{benoist2016random}]
\label{lemma:fundamentalrep}
 Given $\rho_\alpha$ the representation associated to the fundamental weight $\omega_\alpha$, and $||\cdot ||$ an adapted inner product, then
    \begin{enumerate}
        \item $\overline{\omega_\alpha} (\kappa (g)) = \log \norm{\rho_\alpha (g)}$.
        \item $\overline{\omega_\alpha} (\lambda (g)) = \log (\lambda_1 (\rho_\alpha (g)))$.
        \item $\overline{\omega_\alpha} (\sigma (g, \xi)) = \log \frac{\norm{ \rho_\alpha (g) v}}{\norm{v}},$ where $v \in V^\alpha$ belongs to $h L^\alpha$, with $\xi = hP$ ($P$ being the standard minimal parabolic), and $L^\alpha$ the highest weight space. 
    \end{enumerate}
\end{lemma}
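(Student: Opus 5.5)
The plan is to reduce all three items to the structure of the weight decomposition of $V^\alpha$ under $\mathfrak{a}$, together with the compatibility of the adapted inner product with the Cartan decomposition. First I would set up the basic facts about an adapted ($K^o$-invariant) inner product.

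\begin{enumerate}
\item Elements of $\mathfrak{p}^o$ act by self-adjoint operators, and $K^o$ acts by isometries.
\item Hence $\exp(\mathfrak{a})$ acts diagonally in an orthonormal basis adapted to the weight decomposition $V^\alpha=\bigoplus_\chi V^\alpha_\chi$, and distinct weight spaces are orthogonal.
\item Every weight $\chi$ is of the form $\overline{\omega_\alpha}-\sum_{\beta\in\Pi} n_\beta \beta$ with $n_\beta\geq 0$.
\item The highest weight space $L^\alpha$ is a line.
\item $L^\alpha$ is preserved by the minimal parabolic $P=MAU$. Indeed, $\mathfrak{g}_\beta$ for positive $\beta$ raises weights, so it kills $L^\alpha$ and $U$ acts trivially on it. $A$ acts on it by the character $e^{\overline{\omega_\alpha}}$. $M$ centralizes $A$, so it preserves each weight space.
\end{enumerate}

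For item (1), I would write $g=k\exp(\kappa(g))k'$ with $k,k'\in K$. Since $K$ acts isometrically, $\norm{\rho_\alpha(g)}=\norm{\rho_\alpha(\exp\kappa(g))}$. This is a diagonal self-adjoint operator, so its norm is $\max_\chi e^{\chi(\kappa(g))}$. For $\kappa(g)\in\mathfrak{a}^+$ each $\beta\in\Pi$ is nonnegative, so the maximum is attained at the highest weight and equals $e^{\overline{\omega_\alpha}(\kappa(g))}$.

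For item (2), I would apply (1) to $g^n$, divide by $n$, and pass to the limit. On one side, $\lambda(g)=\lim_n \frac1n\kappa(g^n)$ by definition. On the other, Gelfand's formula gives $\lim_n \norm{\rho_\alpha(g)^n}^{1/n}$ equal to the spectral radius of $\rho_\alpha(g)$, which is the largest modulus of an eigenvalue. Here $\lambda_1$ should be read as this spectral radius. One can alternatively argue via the Jordan decomposition: the hyperbolic part is conjugate to $\exp\lambda(g)$, and its top eigenvalue is $e^{\overline{\omega_\alpha}(\lambda(g))}$ by the same weight argument, while the elliptic and unipotent parts do not change moduli of eigenvalues.

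For item (3), write $\xi=kP$ with $k\in K$. Any $h$ with $\xi=hP$ satisfies $h=kp$ with $p\in P$, and $P$ preserves $L^\alpha$ by the fact above, so $hL^\alpha=kL^\alpha$. I would then take $v=\rho_\alpha(k)v_0$ with $v_0\in L^\alpha$. Decompose $gk=k''\exp(\sigma(g,\xi))u$ as in the definition of $\sigma$. Then $u v_0=v_0$ and $\exp(\sigma)v_0=e^{\overline{\omega_\alpha}(\sigma(g,\xi))}v_0$. By $K$-invariance of the norm,
$$\norm{\rho_\alpha(g)v}=e^{\overline{\omega_\alpha}(\sigma(g,\xi))}\norm{v_0}=e^{\overline{\omega_\alpha}(\sigma(g,\xi))}\norm{v}.$$

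The main obstacle is not the computations but justifying the structural input: the existence of an inner product for which $K$ is orthogonal and $\mathfrak{p}$ is symmetric, and the fact that $L^\alpha$ is a $P$-stable line. For the inner product, I would take the standard route of averaging over $K$ and using the compact real form $\mathfrak{k}\oplus i\mathfrak{p}$, which makes $\rho_\alpha$ unitary on the compact form, so that $\mathfrak{p}$ acts symmetrically. For the line, I would invoke the Tits theorem as stated, in the form that the highest weight is of multiplicity one.
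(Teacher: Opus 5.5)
Your proposal is correct and is essentially the standard argument behind the cited lemma of Benoist--Quint; the paper gives no proof of its own and only cites it. The ingredients match: for (1), the Cartan decomposition plus orthogonality of the weight spaces; for (2), $\lambda(g)=\lim \frac{1}{n}\kappa(g^n)$ combined with the spectral radius formula; for (3), the Iwasawa decomposition, with $U$ fixing and $A$ rescaling the highest weight line. You are also right to require that the adapted inner product make $\mathfrak{p}$ act symmetrically, which is more than the paper's literal ``$K^o$-invariant'' definition: item (2) does not depend on the norm and item (3) uses only $K$-invariance, but item (1) fails for a merely $K$-invariant inner product (already for the adjoint representation of $SL_2(\mathbb{R})$).
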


Given $\Theta \subset \Pi$, one can prove that for each $\alpha \in \Theta$, the parabolic $P_\Theta$ fixes the highest weight line $L^\alpha \subseteq V^\alpha$. 
This defines a $G$-equivariant map
$$\iota^\alpha: \mathcal{F}_\Theta \to \PP(V^\alpha)$$
by $\iota^\alpha(gP_\Theta) = \rho_\alpha(g)L^\alpha$.
Analogously, one can take $\rho_\alpha^*: G \to GL((V^\alpha)^*)$, and define $\iota^{\alpha}_-: \mathcal{F}^{op}_\Theta \to \PP((V^\alpha)^*)$.  
One can check that the map $\iota_\Theta =  \prod_{\alpha \in  \Theta} \iota^\alpha$ is an embedding from $\mathcal{F}_\Theta$ to $\prod_{\alpha \in \Theta}\PP(V^\alpha)$. 
It is called the \emph{Plücker embedding}. 

We say that a pair of flags $(x, y) \in \mathcal{F}_\Theta \times \mathcal{F}_\Theta^{op}$ is \emph{transverse} if for all $\alpha \in \Theta$, $\iota^\alpha(x)$ does not belong to $\iota_-^\alpha(y)$ (thought of as a hyperplane in projective space).
More geometrically, one can check that this is equivalent to the existence of a geodesic in the symmetric space converging to a point at infinity stabilized by each of the parabolics at $\infty$ and $-\infty$, respectively.

\subsection{$\Theta$-Loxodromic elements}

Fix $o \in \mathbb{X}$ and a Weyl chamber $\mathfrak{a}^+ \subset T_o \mathbb{X}$.
Given $\alpha \in \Pi$ a simple root, we get $\rho_\alpha : G \to PGL (V^\alpha)$ the associated fundamental representation as in the previous section.
The choice of $o \in \mathbb{X}$ induces an \emph{adapted inner product} on $V^\alpha$.
We will fix a subset of simple roots $\Theta \subset \Pi$ for the remainder of the section.

\begin{defi}
\label{defi:separatedloxodromic}
 Given $\Theta \subset \Pi$, we say that $g \in G$ \emph{$\Theta$-loxodromic} if $\rho_\alpha (g)$ is proximal for every $\alpha \in \Theta$. 
 Moreover, if the $\rho_\alpha(g)$ are all $\varepsilon$-separated, we say that $g$ is \emph{$(\Theta,\varepsilon)$-separated}.
\end{defi}

We can endow the partial flag manifold $G/P_\Theta$ with a metric by pulling back the induced metric on $V^\alpha$ for the good inner products via the Plücker embedding $$\iota_\Theta: \mathcal{F}_\Theta \to \prod_{\alpha \in \Theta} \mathbb{P} (V^\alpha).$$
This distance coincides with the visual metric induced by the basepoint $o \in \mathbb{X}$.

Any $\varepsilon$-separated element $g$ fixes a pair of flags $(g_+,g_-) \in \mathcal{F}_\Theta \times \mathcal{F}_\Theta^{op}$ called the attractive (respectively, repelling) flags.
Under the Plücker embedding, the attractive flag $g_+$ corresponds to the tuple $(\rho_\alpha (g)_+)_{\alpha \in \Theta}$, and the repelling flag $g_-$ to the tuple of hyperplanes $(\rho_\alpha^* (g^{-1})_+)_{\alpha \in \Theta}$.
Moreover, one can show that any $\Theta$-loxodromic verifies that $\lambda_\Theta (g) \in \mathfrak{a}_\Theta$ lies in the interior of the positive cone bounded by $\ker \alpha$ for $\alpha \in \Theta$.
We will denote this cone as $\mathfrak{a}_\Theta^+$.
One can use this cone to define a partial order in $\mathfrak{a}_\Theta$, via
$$x \leq_\Theta y \Leftrightarrow y - x \in \mathfrak{a}_\Theta^+.$$
If we ever forget the subscript, it means that we are taking $\Theta = \Pi$.
Just like for $\varepsilon$-separated proximal elements, we will denote
$$\mathcal{U}^-_\Theta (g,\varepsilon) = \{ f \in \mathcal{F}_\Theta: d(\iota^\alpha (f), \rho_\alpha (g)_-) \geq \varepsilon, \: \forall \alpha \in \Theta\}.$$
This is the complement of an $\varepsilon$-neighborhood of the flags in $\mathcal{F}_\Theta$ that are non-transverse with respect to $g_-\in \mathcal{F}_\Theta^{op}$, also called the Schubert locus.
The estimates of Corollary \ref{lemma:busemannprox} and Lemma \ref{lemma:agglomerated}, along with the discussion on the fundamental representations of Lemma \ref{lemma:fundamentalrep} lead us to the following proposition

\begin{prop}
\label{coro:estimatesgeneralcase}
 Let $\varepsilon >0$, and $g \in G$ be $\Theta$-loxodromic, then, given $R$ the unique vector such that $\overline{\omega}_\alpha (R) =1$ for all $\alpha \in \Pi$, and $R_\varepsilon = - \log \sin (\varepsilon) R$. Then
 \begin{itemize}
    \item $\lambda_\Theta (g) - \sigma_\Theta (g,f) \leq_\Theta R_{\varepsilon}$, for every $f \in \mathcal{U}^-_\Theta (g,\varepsilon)$.
    \item If $g$ is $\varepsilon$-separated, then 
    $$0 \leq_\Theta \kappa_\Theta (g) - \sigma_\Theta (g,f) \leq_\Theta 2 R_\varepsilon,$$ 
 for all $f \in \mathcal{U}^-_\Theta (g,\varepsilon)$.
 \end{itemize}
\end{prop}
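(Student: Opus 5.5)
The plan is to reduce everything to the fundamental representations. The order $\leq_\Theta$ on $\mathfrak{a}_\Theta$ is defined by the cone $\mathfrak{a}_\Theta^+$, so an inequality $x \leq_\Theta y$ amounts to $\overline{\omega}_\alpha(x) \leq \overline{\omega}_\alpha(y)$ for every $\alpha \in \Theta$. The weights $\overline{\omega}_\alpha$, $\alpha\in\Theta$, are the coordinates dual to the facet cone. They factor through $p_\Theta$, since $\omega_\alpha$ vanishes on the orthogonal complement of $\mathfrak{a}_\Theta$ for $\alpha \in \Theta$. Hence $\overline{\omega}_\alpha \circ \kappa_\Theta = \overline{\omega}_\alpha\circ\kappa$, and likewise for $\lambda$ and $\sigma$. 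Moreover $\overline{\omega}_\alpha(R_\varepsilon) = -\log\sin\varepsilon$.

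The first step is to fix $\alpha\in\Theta$ and $f \in \mathcal{U}^-_\Theta(g,\varepsilon)$, and to choose a lift $\xi = hP \in G/P$ of $f$. By Lemma \ref{lemma:fundamentalrep}, a vector $v \in \rho_\alpha(h)L^\alpha$ spans exactly $\iota^\alpha(f)$, because $P\subset P_\Theta$ fixes $L^\alpha$. The same lemma then gives three translations:
\[
\overline{\omega}_\alpha(\sigma_\Theta(g,f)) = \log \frac{\norm{\rho_\alpha(g)v}}{\norm{v}},\quad \overline{\omega}_\alpha(\kappa_\Theta(g)) = \log\norm{\rho_\alpha(g)},\quad \overline{\omega}_\alpha(\lambda_\Theta(g))=\log\lambda_1(\rho_\alpha(g)).
\]
The hypothesis $f\in\mathcal{U}^-_\Theta(g,\varepsilon)$ says precisely that $[v] \in \mathcal{U}^-(\rho_\alpha(g),\varepsilon)$. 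Here I must check that the repelling hyperplane of $\rho_\alpha(g)$ matches the one used in the definition of $\mathcal{U}^-_\Theta$.

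For the first bullet, I apply Lemma \ref{lemma:agglomerated} to $\rho_\alpha(g)$, which is proximal since $g$ is $\Theta$-loxodromic. This gives $\log\lambda_1 - \log\frac{\norm{\rho_\alpha(g)v}}{\norm v} \leq -\log\sin\varepsilon$, which is the desired inequality in the $\alpha$-coordinate.

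For the second bullet, the lower bound $0 \leq \log\norm{\rho_\alpha(g)} - \log\frac{\norm{\rho_\alpha(g)v}}{\norm v}$ is the trivial operator norm bound. The upper bound comes from Corollary \ref{lemma:busemannprox} applied to the $\varepsilon$-separated proximal map $\rho_\alpha(g)$, which gives the difference at most $-2\log\sin\varepsilon = \overline{\omega}_\alpha(2R_\varepsilon)$. Since this holds for all $\alpha\in\Theta$, the order statements follow.

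I expect the main obstacle to be the bookkeeping identification of $\leq_\Theta$ with coordinatewise inequalities of $\overline{\omega}_\alpha$ for $\alpha\in\Theta$. This needs the $\overline{\omega}_\alpha$ to be dual to the edges of $\mathfrak{a}_\Theta^+$ inside $\mathfrak{a}_\Theta$, and needs the normalization of $R$ to be consistent with the integer multiples $\overline{\omega}_\alpha$. Since $R$ is defined through the $\overline{\omega}_\alpha$ themselves, the scaling is automatic, but the duality should be stated explicitly.
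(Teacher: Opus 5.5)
Your per-weight estimates are correct and are what the paper's one-line justification rests on. Lemma \ref{lemma:agglomerated} and Corollary \ref{lemma:busemannprox}, read through Lemma \ref{lemma:fundamentalrep}, give for every $\alpha\in\Theta$ and $f\in\mathcal{U}^-_\Theta(g,\varepsilon)$ that $\overline{\omega}_\alpha(\lambda(g)-\sigma(g,f))\le-\log\sin\varepsilon$ and $0\le\overline{\omega}_\alpha(\kappa(g)-\sigma(g,f))\le-2\log\sin\varepsilon$. The genuine problem is the step you called bookkeeping: the $\overline{\omega}_\alpha$, $\alpha\in\Theta$, are \emph{not} the coordinates dual to $\mathfrak{a}^+_\Theta$. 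The paper defines $\mathfrak{a}^+_\Theta$ as the cone in $\mathfrak{a}_\Theta$ bounded by the walls $\ker\alpha$, $\alpha\in\Theta$. Its edges are spanned by the vectors dual to the simple roots, so its dual coordinates are the restricted simple roots $\alpha|_{\mathfrak{a}_\Theta}$, not the weights. Since each $\omega_\alpha$ is a nonnegative combination of simple roots, you only get $x\leq_\Theta y\Rightarrow\overline{\omega}_\alpha(x)\le\overline{\omega}_\alpha(y)$. That is the opposite of the implication you need, and the converse fails in general once $|\Theta|\ge2$.

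This cannot be repaired within the literal statement, because the second bullet is false for that order. In $SL_3(\RR)$ with $\Theta=\Pi$, let $g=\mathrm{diag}(e^a,e^b,e^c)$ with $a+b+c=0$ and $a-b$, $b-c$ large, so that $g$ is $\varepsilon$-separated in both fundamental representations. Let $f=([e_1],\langle e_1,e_2+e_3\rangle)$. Its Pl\"ucker components lie at distance $\pi/2$ and $\pi/4$ from the repelling hyperplanes, so $f\in\mathcal{U}^-_\Theta(g,\varepsilon)$ for $\varepsilon<\pi/4$. Then $\overline{\omega}_{\alpha_1}(\kappa(g)-\sigma(g,f))=0$ and $\overline{\omega}_{\alpha_2}(\kappa(g)-\sigma(g,f))=b-\tfrac12\log\tfrac{e^{2b}+e^{2c}}{2}$. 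Using $\alpha_1=2\overline{\omega}_{\alpha_1}-\overline{\omega}_{\alpha_2}$,
\[
\alpha_1\bigl(\kappa(g)-\sigma(g,f)\bigr)=\tfrac12\log\tfrac{1+e^{2(c-b)}}{2}<0,
\]
so $0\leq_\Theta\kappa_\Theta(g)-\sigma_\Theta(g,f)$ fails. What your argument proves is the proposition with $x\leq_\Theta y$ meaning $\overline{\omega}_\alpha(y-x)\ge0$ for all $\alpha\in\Theta$. The paper's appeal to the same lemmas silently proves only this as well, and it is also the reading under which Remark \ref{rmk:upperbound} holds. You should adopt that order explicitly as the definition rather than assert that it agrees with $\mathfrak{a}^+_\Theta$. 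Be aware that the later use of monotonicity of $\varphi$ for this order then requires $\varphi$ to be a nonnegative combination of the $\overline{\omega}_\alpha$.
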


This is the key estimate that we will need for our main theorem.
This proposition should be compared with \cite[Lemma 3.4]{quint2003indicateur}.

%% file: sections/thermodynamic.tex
\section{Thermodynamic Formalism and applications}
\label{sec:thermodynamic}

The goal of this section is to review the notion of Markov coding of a Gromov hyperbolic group and adapt the multicone criterion developed in \cite{bochi2019anosov} to this setup.
This will allow us to import techniques from symbolic dynamics to measure quantitative properties of Anosov representations.
In particular, following Quint \cite{quint2005schottky}, we define a Busemann potential and show how to compute critical exponents from it.

\subsection{Markov structures and sofic shifts}
\label{sec:markov}

In this section, we will review the notion of a strong Markov structure adapted to a Gromov hyperbolic group $\Gamma$ equipped with a symmetric set of generators $S$, inducing a word-length function $|\cdot|_S$.
Such structure will allow us to codify the group action on itself, and on its boundary, allowing us to use techniques from thermodynamic formalism. This notion was introduced in \cite{ghys1990sur}, and we follow the terminology used in Cantrell \cite{cantrell2025mixing}, and Cantrell-Tanaka \cite{cantrell2025manhattan}.

Recall that a \emph{directed graph} is a tuple $(\mathcal{V},\mathcal{E}, i,f)$, where $\mathcal{V}$ is called the set of vertices, $\mathcal{E}$ the set of edges, and $i,f: \mathcal{E} \rightarrow \mathcal{V}$ functions denoting the initial, and final vertices corresponding to a directed edge $e \in \mathcal{E}$.
We will denote $v \xrightarrow{e} v'$ if $i(e) =v$ and $f(e) = v'$.
We will assume that directed graphs are finite.

Given $S$ a finite set, we say that a \emph{graph labeled on $S$} is a directed graph equipped with a function $\pi: \mathcal{E} \rightarrow S$ called a labeling.
We will be interested in the case where $S$ is a symmetric system of generators of a Gromov hyperbolic group $\Gamma$.
In this case, given a graph $\mathcal{G}$ labeled on $S$, and a path
$$v_0 \xrightarrow{\pi(e_1)} v_1 \xrightarrow{\pi(e_2)} \ldots \xrightarrow{\pi(e_{n})} v_n,$$ 
we define the evaluation of the path $ev (e_1,\ldots,e_n)$ as the product $\pi (e_n) \ldots \pi (e_1)$.

\begin{defi}
 Given $(\Gamma, S)$ a Gromov hyperbolic group with a symmetric set of generators, we will call a labeled graph $\mathcal{G}$ equipped with a special vertex $*$ a \emph{strong Markov structure} if it satisfies the following properties:
   \begin{enumerate}
      \item For each vertex $v \in \mathcal{V}$, there exists a path starting at $*$ and ending at $v$.
      \item For each path $(e_1,\ldots,e_n)$ with $i(e_1) = *$, we get that the length $|ev(e_1,\ldots,e_n)|_S$ is $n$.
      \item Every $\gamma \in \Gamma$ is equal to $ev (e_1,\ldots,e_n)$ for a unique path $(e_1,\ldots,e_n)$ starting at $*$.
   \end{enumerate}
\end{defi}

One can prove that such a strong Markov structures always exist, see \cite[Section 3.2]{calegari2011ergodic} for a proof (such result is usually attributed to Cannon \cite{cannon1984combinatorial}).

Notice that each labeled directed graph $\mathcal{G}$ has a unique labeled subgraph $\mathcal{G}^r$ that is \emph{maximally recurrent} in the sense that it is a union of cycles, and it is not contained in any other subgraph with this property.
The finiteness of the graph $\mathcal{G}$ implies the following:

\begin{lemma}
\label{lemma:densityandfiniteness}
 Given $\mathcal{G}$ a strong Markov structure for $(\Gamma,S)$, then there exists $c >0$ such that the evaluation of paths $(e_1,\ldots,e_n)$ in the recurrent graph $\mathcal{G}^r$ is $c-$dense in $\Gamma$.
 That means, for every $\gamma \in \Gamma$, there is $\gamma' = ev(e_1,\ldots,e_n)$ for which $d(\gamma,\gamma') \leq c$.
 Moreover, each element of $\Gamma$ is the evaluation of finitely many finite paths in $\mathcal{G}^r$, and this number is independent of the chosen element.
\end{lemma}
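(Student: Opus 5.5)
The plan is to prove the second assertion first, since it only uses the uniqueness axiom (3), and then to establish density through the strongly connected component structure of $\mathcal{G}$.

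\textbf{Finiteness.} For each vertex $v \in \mathcal{V}$, fix once and for all a path $q_v$ from $*$ to $v$; such a path exists by axiom (1).
\begin{itemize}
\item Let $(e_1,\ldots,e_n)$ be a path in $\mathcal{G}^r$ starting at $v$ with $ev(e_1,\ldots,e_n)=\gamma'$.
\item The concatenation $q_v\cdot(e_1,\ldots,e_n)$ starts at $*$ and evaluates to $\gamma' \, ev(q_v)$.
\item By axiom (3) it is therefore the unique $*$-path representing $\gamma' \, ev(q_v)$, and $(e_1,\ldots,e_n)$ is its tail after the first $|q_v|$ edges.
\item Hence, once $v$ is fixed, the path is determined by $\gamma'$.
\end{itemize}
So each $\gamma'$ is the evaluation of at most $|\mathcal{V}|$ paths in $\mathcal{G}^r$. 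I read ``independent of the chosen element'' as this uniform bound $|\mathcal{V}|$; the exact count can vary with $\gamma'$.

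\textbf{Density: decomposing a coding path.} Given $\gamma$, take its unique $*$-path $(e_1,\ldots,e_n)$ with $n=|\gamma|_S$.
\begin{itemize}
\item An edge lying on no cycle can be traversed at most once by any path, since a second traversal would close a cycle through it. So the path uses at most $|\mathcal{E}|$ non-recurrent edges.
\item Collapse the strongly connected components to get a DAG. A path cannot return to a component after leaving it.
\item Hence the path splits as $b_0 w_1 b_1 w_2 \cdots w_k b_k$. Here the $w_i$ are (possibly empty) paths inside distinct recurrent components $C_1 \prec \cdots \prec C_k$ in DAG order, and $k \le |\mathcal{V}|$.
\item The connecting pieces $b_i$ consist of non-recurrent edges, so $\sum |b_i| \le |\mathcal{E}|$.
\end{itemize}
If only one $w_i$ is nonempty, set $\gamma' = ev(w_i)$. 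Then $\gamma$ differs from $\gamma'$ by left and right multiplication by elements of length $\le |\mathcal{E}|$. For the right factor this is exactly a distance bound. For the left factor I would instead compare with the conjugate, using that the reversed order of $ev$ lets us choose which end is the bounded one.

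\textbf{Main obstacle: several long recurrent segments.} The hard case is when two segments $w_i, w_j$ in different components are both long. A single path in $\mathcal{G}^r$ cannot then follow the whole geodesic, because it cannot pass between components.

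What I would try first is to exploit that axiom (3) forces the $*$-path of every element to be unique. Along each edge $C_i \to C_j$ of the component DAG, I would show that the $C_j$-part can be re-coded. The idea is a \emph{thin triangle} argument with the geodesic $[1,\gamma]$ and the $*$-path of $ev(w_j b_j\cdots)$, which stays inside $\mathcal{G}^r$ apart from a bounded prefix. This would replace the long $C_i$-portion by a bounded detour, so that $\gamma$ is within bounded distance of the evaluation of one recurrent path.

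If this fails, my fallback is to modify the strong Markov structure (for example by merging or pruning components, as in Calegari's construction) so that $\mathcal{G}^r$ becomes strongly connected. The single-segment argument above then applies directly, with $c=|\mathcal{E}|$.
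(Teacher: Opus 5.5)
\textbf{The finiteness half of your proposal is correct; the density half has two genuine gaps.} The finiteness argument, by concatenating with a fixed path $q_v$ and invoking axiom (3), gives the uniform bound $|\mathcal{V}|$. Reading ``independent of the element'' as a uniform bound is the right interpretation. The paper only remarks that the lemma follows from finiteness of $\mathcal{G}$, so density is where an actual argument is needed.

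\emph{First gap: the left factor in the one-long-segment case.} Since $ev(e_1,\ldots,e_n)=\pi(e_n)\cdots\pi(e_1)$, a decomposition $b_0wb_1$ gives $\gamma=ev(b_1)\,ev(w)\,ev(b_0)$. The tail at the end of the path is forced to be the left factor, so there is no freedom to ``choose which end is bounded.'' The distance from $\gamma$ to $ev(w)$ is then comparable to the length of the conjugate $ev(w)^{-1}ev(b_1)^{-1}ev(w)$, which is unbounded in general. What you actually prove is the two-sided statement $\gamma=a\gamma'b$ with $|a|,|b|\le|\mathcal{E}|$. That is all the paper uses later: in Theorem~\ref{thm:coding} and Proposition~\ref{prop:poincareradioespectral} only Cartan projections and singular value gaps matter, and these move by bounded amounts under bounded multiplication on either side. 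So it is a sensible target, but you should state it as such rather than claim $c$-density.

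\emph{Second gap: several long segments in different components.} You rightly call this the crux, but it is left open. A path in $\mathcal{G}^r$ never leaves a strongly connected component, so nothing can be spliced, and the proposed recoding has no mechanism behind it.
\begin{itemize}
\item The $*$-path of $ev(w_jb_j\cdots)$ has no reason to lie in $\mathcal{G}^r$ after a bounded prefix. It is unrelated to the path read from the vertex where $w_j$ starts, and it may itself cross several components.
\item A long $C_i$-portion cannot be traded for a bounded detour without moving far from $\gamma$.
\end{itemize}

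This step cannot follow from finiteness and axioms (1)--(3) alone. Take $\mathbb{Z}^2=\langle a,b\rangle$ and the graph reading exactly the words $a^mb^n$:
\begin{itemize}
\item vertices $*,A^{\pm},B^{\pm}$;
\item an edge labelled $a^{\pm1}$ into $A^{\pm}$ from $*$ and from $A^{\pm}$;
\item an edge labelled $b^{\pm1}$ into $B^{\pm}$ from each of $*,A^+,A^-,B^{\pm}$.
\end{itemize}
This graph satisfies all three axioms, its recurrent part is the four loops, and $a^nb^n$ is at distance at least $n$ from every evaluation of a path in $\mathcal{G}^r$. So hyperbolicity must enter essentially, and in practice one needs a growth argument. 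An example is a growth-tightness argument in the spirit of Calegari--Fujiwara, showing that a component of maximal growth realizes every element up to bounded multiplication on both sides.

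Your fallback does not help either. The lemma is about the given $\mathcal{G}$, and the paper applies it to the specific coding carrying the multicones. Moreover, a strong Markov structure with strongly connected recurrent part is not something one knows how to produce in general.
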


We will denote the set of conjugacy classes of $\Gamma$ as $[\Gamma]$.
The following result due to Cantrell tells us that each conjugacy class has nice representatives in the recurrent part of a strong Markov structure:

\begin{prop}[{\cite[Corollary 3.5]{cantrell2025mixing}}]
\label{prop:cantrell}
 Let $\mathcal{G}$ a strong Markov structure for $(\Gamma,S)$, and $\mathcal{C}$ a connected component of $\mathcal{G}^r$.
 Then, there exists $N>0$ such that for any non-torsion conjugacy class $[g] \in [\Gamma]$ there is a periodic path $(e_0,\ldots, e_l)$ (i.e., with $i(e_0) = f(e_l)$) contained in $\mathcal{C}$ for which $ev (e_0,\ldots,e_l)$ belongs to one of $[g^{\pm N}]$.   
\end{prop}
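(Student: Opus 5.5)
This result is quoted from \cite[Corollary 3.5]{cantrell2025mixing}. Were I to reprove it, I would go through axes of non-torsion elements and the finite-state nature of the coding.

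\textbf{Step 1: a periodic geodesic.} Let $g\in\Gamma$ be non-torsion. Since $\Gamma$ is hyperbolic, $g$ has two fixed points $g^\pm\in\partial\Gamma$. Every bi-infinite geodesic in the Cayley graph joining $g^-$ to $g^+$ lies in a uniformly bounded neighbourhood of any quasi-axis of $g$, with a bound depending only on $(\Gamma,S)$. By local finiteness, the set of such geodesics, taken modulo the action of $\langle g\rangle$, is finite, of cardinality bounded by a constant $N_1=N_1(\Gamma,S)$. Consequently some power $g^{m}$ with $1\le m\le N_1!$ preserves one of these geodesics $\ell$ and acts on it as a translation. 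After conjugating $g$ we may assume that $\ell$ passes through the identity. Then the label sequence of $\ell$ is a bi-infinite periodic word whose period $w$ satisfies $w=g^{\pm m}$ in $\Gamma$. The sign depends on orientation, which is where the $\pm$ in the statement comes from.

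\textbf{Step 2: coding periodic geodesics by cycles.} Bi-infinite geodesics are coded by bi-infinite paths in $\mathcal G^r$. To see this, take longer and longer finite subsegments of $\ell$, translate each so that it starts at the identity, and code it by its unique path from $*$ given by property (3). A path in the finite graph eventually leaves the transient part, and a diagonal (K\"onig-type) argument then produces a bi-infinite path in $\mathcal G^r$ whose labels are exactly those of $\ell$. Since $\ell$ is $g^m$-periodic and $\mathcal V$ is finite, the pigeonhole principle gives two times differing by a multiple $k m$ of the period at which the path visits the same vertex. Here $k\le|\mathcal V|$, and we may align these times with the period. Cutting the path between them gives a periodic path whose evaluation is conjugate to $g^{\pm km}$. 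Taking $N$ to be a common multiple of all the possible values $km$ (which are uniformly bounded) and concatenating the cycle with itself gives evaluation conjugate to $g^{\pm N}$.

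\textbf{Step 3: forcing the cycle into $\mathcal C$ (the main obstacle).} Step 2 only yields a cycle in \emph{some} component of $\mathcal G^r$, whereas the statement asks for a fixed component $\mathcal C$. I expect this step to be the hardest. My plan is to exploit the freedom of conjugating $g$ and to use the fact that the state reached after reading a geodesic word depends only on a bounded neighbourhood of its endpoint, i.e.\ the Cannon cone-type structure.

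Concretely, I would first show that $\mathcal C$ contains a cycle $c_0$ whose evaluation $h$ is loxodromic. Then I would consider geodesic words of the form $h^{a}\,u\,h^{b}$, where $u$ ranges over paths approximating the axis of a conjugate of $g$. Lemma \ref{lemma:densityandfiniteness}, applied inside $\mathcal C$, provides uniform approximations. The aim is to show that, for a suitable conjugate, the coding path of the periodic geodesic can be chosen to re-enter and stay in $\mathcal C$ because its states are determined by local data that also occur along $c_0$. This may require increasing $N$ by a further uniformly bounded factor, to absorb the combinatorics of switching between components.

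If this local argument fails, the fallback is to follow Cantrell's approach directly: use mixing and ergodicity properties of the Patterson--Sullivan-type measure on the component $\mathcal C$ to show that every conjugacy class has a representative, up to a bounded power, realized by a cycle in $\mathcal C$.
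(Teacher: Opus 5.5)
\textbf{Step~3 is a genuine gap, and it cannot be closed for an arbitrary component.} Step~3 is where all the content of the proposition lies, and you leave it (and the fallback) as a plan. More seriously, as quoted, with $\mathcal{C}$ any connected component of $\mathcal{G}^r$, the statement is false.

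Here is a counterexample. Take $\Gamma=F_2=\langle a,b\rangle$ and $S=\{a^{\pm1},b^{\pm1}\}$. Let $\mathcal{G}$ have vertices $*$ and $(x,\epsilon)$, where $x\in S$ is the last letter read and $\epsilon\in\{0,1\}$ records whether some $b^{\pm1}$ has been read; discard the unreachable vertices $(b^{\pm1},0)$. From $*$ there is an edge labelled $y$ for every $y\in S$. From $(x,\epsilon)$ there is an edge labelled $y$ for every $y\neq x^{-1}$. Each such edge leads to $(y,\epsilon')$, where $\epsilon'=1$ iff $\epsilon=1$ or $y=b^{\pm1}$.

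Paths from $*$ spell exactly the reduced words, so (1)--(3) hold. However, no edge returns to $\epsilon=0$. Hence the vertex $(a,0)$ with its self-loop labelled $a$ is a connected component of $\mathcal{G}^r$. Its periodic paths evaluate to powers $a^k$, which are never conjugate to $b^{\pm N}$.

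The same example shows that Lemma~\ref{lemma:densityandfiniteness} cannot be ``applied inside $\mathcal{C}$''. Density holds for $\mathcal{G}^r$ as a whole, not component by component, so your cone-type argument has nothing to run on.

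What is needed is that $\mathcal{C}$ be a \emph{maximal} component, i.e.\ its adjacency matrix has the same spectral radius as that of $\mathcal{G}$. For the codings of Example~\ref{ex:codings}, $\mathcal{G}^r$ is connected, so this is harmless there. You also need the Calegari--Fujiwara property of maximal components: there is $L$ such that for every $g\in\Gamma$ there are $u,v$ with $|u|,|v|\le L$ for which $ugv$ is the evaluation of a path lying in $\mathcal{C}$.

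\textbf{How to finish once you have that input.} The proof is then a short pigeonhole argument, and you need neither the periodic geodesic of Step~1 nor the coding of a fixed line in Step~2.
\begin{enumerate}
\item Conjugate $g$ to minimal word length, so that $k\mapsto g^k$ is a quasi-geodesic with uniform constants, and apply the property to $g^n$ with $n$ large.
\item The resulting path in $\mathcal{C}$ is preceded by a path from $*$, so it traces a geodesic. By the Morse lemma, this geodesic passes within a uniform distance $D$ of every point of a translate of $\{g^{k}\}_{0\le k\le n}$ (up to the inversion built into the convention for $ev$).
\item At each such passage, record the pair (current vertex, offset in the ball $B(D)$). Set $M=|\mathcal{V}|\cdot\#B(D)$. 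Among $M+1$ consecutive values of $k$, two records coincide.
\item The subpath between them is a periodic path in $\mathcal{C}$ whose evaluation lies in $[g^{\pm j}]$ for some $1\le j\le M$. Traversing it $M!/j$ times gives the statement with $N=M!$.
\end{enumerate}

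\textbf{Steps~1 and~2 are also wrong as written.}
\begin{itemize}
\item In Step~1, finiteness of the set of lines from $g^-$ to $g^+$ modulo $\langle g\rangle$ says nothing about stabilizers, since $g$ acts trivially on its own orbit space. That set need not even be finite: in $\ZZ\times\ZZ/4$ with generators $a^{\pm1},z^{\pm1}$, the lines $\cdots aa\,z\,a^m z\,aa\cdots$, $m\ge 0$, are pairwise $\langle a\rangle$-inequivalent. The existence of a line preserved by a uniformly bounded power of $g$ is a nontrivial fact needing its own proof.
\item In Step~2, property~(3) gives for each element the path spelling the automaton's \emph{preferred} geodesic word, not the word read along $\ell$. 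So the K\"onig limit is some geodesic near $\ell$ whose labels generally differ from those of $\ell$ and need not be periodic. The alignment with the period then breaks down; the (vertex, offset) pigeonhole above is what replaces it.
\end{itemize}
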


A strong Markov structure for $(\Gamma, S)$ can be used to define a dynamical system via
$$X_\mathcal{G} := \{ (s_i) \in S^\mathbb{Z} : \exists (e_n)_{n \in \mathbb{Z}} \text{ biinfinite path s.t. }\pi (e_n) = s_n,\: \forall n \in \mathbb{Z}\},$$
and equipping it with the \emph{shift map} $T: X_\mathcal{G} \rightarrow X_\mathcal{G}$ sending $(x_n)_{n \in \mathbb{Z}}\to (x_{n+1})_{n\in \mathbb{Z}}$.
Since bi-infinite paths are contained in $\mathcal{G}^r$, such a definition depends solely on the recurrent part of the graph.
We can analogously define a unilateral shift $X_\mathcal{G}^+$ by taking the positive labels of a bi-infinite path.

Notice that we can define a map $p : X_\mathcal{G}^+ \rightarrow \partial \Gamma$ by sending
$$\underline{x} \to \lim_n x_0^{-1} \ldots x_n^{-1},$$
which is well defined because $\gamma_n = x_0^{-1}\ldots x_n^{-1}$ is a geodesic on $\Gamma$.

Such dynamical systems are called \emph{sofic shifts} (see \cite{lind2021introduction} for more information).
These systems are always finite factors of subshifts of finite type.
To see this, notice that one can define $\Sigma_\mathcal{G}$ as the space of infinite sequences of edges $(e_n)_{n \in \mathbb{Z}}$ corresponding to biinfinite paths.
Since each vertex $v \in \mathcal{V}$ has at most one edge for each label (by the third condition of a strong Markov structure), we get that the projection map $\Sigma_{\mathcal{G}} \rightarrow X_\mathcal{G}$ is finite to one (with fibers being bounded by the number of vertices).

\begin{ex}
\label{ex:codings}
 Suppose $\Gamma = F_2$, Figure \ref{fig:markov_partition} represents two labeled graphs defining strong Markov structures for the system of generators $S = \{a,b,a^{-1},b^{-1}\}$, for $a,b$ a free basis, and $S = \{a,b,c, a^{-1},b^{-1},c^{-1}\}$ with $abc = id$.
\begin{figure}[h!]
   \begin{center}
      \includegraphics[scale=1.2]{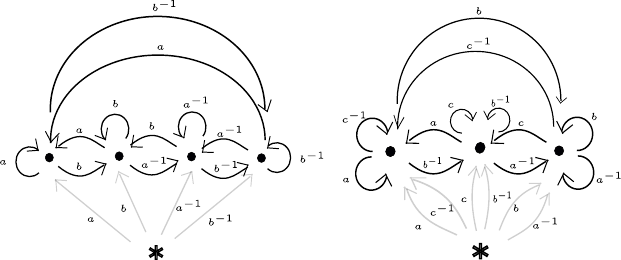}
   \end{center}
   \caption{Strong Markov structures for the free group. Arrows in grey are not in the recurrent part of the graph.}
   \label{fig:markov_partition}
\end{figure}

Both of these graphs have the property that edges with the same label end in the same vertex.
This translates to the fact that the sofic shift $X_\mathcal{G}$ is in fact, a subshift of finite type.
\end{ex}

\subsection{Anosov representations and the multicone criterion}

In this section, we briefly recall the definition of Anosov representations. 
We will then explain a characterization via invariant multicone families adapted to strong Markov partitions, based on the work of Bochi-Potrie-Sambarino \cite{bochi2019anosov}.

Following \cite{bochi2019anosov} and \cite{kapovich2017anosov}, we can think of the Anosov conditions as a strengthening of a group being quasi-isometrically embedded in the symmetric space.

\begin{defi}
 Let $G$ be a semisimple Lie group with no compact factors, $\mathfrak{a}$ a Cartan subspace, and $\Theta \subset \Pi$ a subset of simple roots.
 We say that a representation $\rho : \Gamma \rightarrow G$ is $\Theta$-\emph{Anosov} if there exist $A,B >0$ such that:
    $$\alpha (\kappa (\rho (g))) \geq A |g| - B,\: \forall g \in \Gamma, \alpha \in \Theta,$$
 where $|\cdot|$ is a choice of word metric on $\Gamma$, and $\kappa$ is the Cartan projection.    
\end{defi}

In \cite{kapovich2017anosov}, the authors show that if a group $\Gamma$ admits an Anosov representation, then it is Gromov hyperbolic.
Moreover, the limit set on $\mathcal{F}_\Theta$ of a $\Theta-$Anosov representation can be equivariantly parametrized by $\partial\Gamma$.

More precisely, there exists a pair of maps $(\xi,\xi^{op}): \partial \Gamma \rightarrow \mathcal{F}_\Theta \times \mathcal{F}_\Theta^{op}$, with $\xi(x)$ transverse to $\xi^{op}(y)$ for $x \neq y$. These maps are dynamics-preserving, particularly implying that for $\gamma \in \Gamma$ we have that $\xi (\gamma^+)$ and $\xi^{op} (\gamma^-)$ are the attractors and repellors for the $\Theta-$loxodromic element $\rho (\gamma)$.
We will refer to $(\xi,\xi^{op})$ as the \emph{limit maps}.
See \cite{bochi2019anosov} or \cite{kapovich2017anosov} for a proof of this characterization.

We will now see how the Anosov property can be certified using a strong Markov structure for $(\Gamma, S)$ and an invariant family of multicones, as described below.

\begin{defi}
 Let $\mathcal{G}$ be a strong Markov structure for $(\Gamma,S)$, $\rho : \Gamma \rightarrow G$ a representation and $\Theta \subset \Pi$.
   \begin{itemize}
      \item A \emph{multicone} in $\mathcal{F}_\Theta$ is an open subset $M$ with the property that, for every $\alpha \in \Theta$, $\iota^\alpha (M)$ is an open subset of $\mathbb{P} (V_\alpha)$ contained in the complement of some hyperplane.
      \item An \emph{invariant family of  $\Theta-$multicones} for $\rho$ adapted to $\mathcal{G}$ is a collection $\{M_v\}_{v \in \mathcal{V}}$ of multicones indexed in vertices of $\mathcal{G}$ with the property that whenever we get an edge $v_0 \xrightarrow{e} v_1$, we get:
      $$\overline{\rho(\pi (e)^{-1}) M_{v_1}} \subset M_{v_0}.$$ 
   \end{itemize}
\end{defi}

The following criterion follows from the techniques used in \cite{bochi2019anosov}.
Since our setup is slightly different from theirs, we include a roadmap of the proof for the reader's convenience.

\begin{thm}
   \label{thm:coding}
 Let $(\Gamma, S)$ be a hyperbolic group with a symmetric set of generators.
 A representation $\rho$ is $\Theta-$Anosov if and only if there is an invariant family of $\Theta-$multicones adapted to $\mathcal{G}$.
 Moreover, given $p : X_\mathcal{G}^+ \rightarrow \partial \Gamma$ the coding of the boundary, we get:
   $$\xi (p(\underline{x})) = \lim_n \rho (x_0^{-1}\ldots x_n^{-1}) P_{n+1},$$
 for $\xi$ the limit map of $\rho$ and $P_{n+1} \in M_{v_{n+1}}$ in the vertex $f(e_n)$, for $e_n$ defining the labeling $x_n$.
 Particularly, $\xi (p (\underline{x})) \in M_{v_0}$ the initial vertex of the path defining $\underline{x}$.
\begin{proof}
 Up to composing with a fundamental representation, we can assume that $G = SL_n (\mathbb{R})$, and study the projective Anosov case (i.e., $\Theta = \{\alpha_1\}$ such that $\mathcal{F}_\Theta$ is projective space).
   
 Let $\mathcal{G}^{op}$ be the labeled graph obtained by reversing the orientation of every edge, and changing the label $s$ to $s^{-1}$.
 Such a labeled graph induces a (bi-infinite) sofic shift $X_{\mathcal{G}^{op}}$.
 An invariant family of multicones for $\rho$ adapted to $\mathcal{G}$ is the same as having an invariant family of cones for the linear cocycle $T_\rho: X_{\mathcal{G}^{op}} \times \mathbb{R}^n \rightarrow X_{\mathcal{G}^{op}} \times \mathbb{R}^n$ sending:
$$(\underline{x}, v)\to (T\underline{x}, \rho(x_0) v).$$
 A standard argument (see for instance \cite[Theorem 2.6]{crovisier2015introduction}) implies that such cones exist iff there exists a pair $(E^{cu}, E^{cs}): X_{\mathcal{G}^{op}} \rightarrow (\mathbb{P}(\mathbb{R}^n), \mathbb{P}((\mathbb{R}^n)^*))$ defining a dominated splitting for $T_\rho$.

 By the work of Bochi-Gourmelon \cite{bochi2009some}, the existence of dominated splittings is in turn equivalent to having exponential gaps between the first two singular values for every finite product $\rho (ev (e_1,\ldots,e_l)^{-1})$ corresponding to a path $ev (e_1,\ldots,e_l)$ in the recurrent piece $\mathcal{G}^r$.
 Since these elements are $c-$dense in $\Gamma$ by Lemma \ref{lemma:densityandfiniteness}, this is equivalent to having singular value gaps in the whole group, which is precisely the Anosov property.

 Finally, Lemma 4.7 and Proposition 5.4 of \cite{bochi2019anosov} imply that if $x$ is the limit of a geodesic ray $\lim_n x_0^{-1}\ldots x_l^{-1}$, then the limit map is defined by
 $$\{\xi (x) \} = \bigcap \rho(x_0^{-1}\ldots x_n^{-1})M_{v_n},$$
 which implies the last claim.
\end{proof}
\end{thm}

\subsection{Critical exponents via sofic shifts}

The goal of this section is to explain how we can use thermodynamic formalism in the sofic shift to compute critical exponents for an Anosov representation into a Lie group $G$. 
As before, we assume a choice of $\mathfrak{a}$, a Cartan subspace with a fixed Weyl chamber $\mathfrak{a}^+$. 

The limit cone $\mathcal{L}_\Gamma$ of a subgroup $\Gamma$ is the minimal closed cone containing the Jordan projection of every element in $\Gamma$.
It can analogously be defined as the asymptotic cone generated by the Cartan projection of elements of $\Gamma$, i.e.,
$x \in \mathcal{L}_\Gamma$ if and only if there exists $t_n \to 0$, and $\gamma_n \in \Gamma$ going to infinity such that $x = \lim_n t_n \kappa (\gamma_n)$ (see \cite[Remark 6.3]{benoist2016random}).

We say that $\varphi \in \mathfrak{a}^*$ is \emph{positive in the limit cone} if $\varphi|_{\mathcal{L}_\Gamma - \{0\}} > 0$.

\begin{defi}
 Given $\Gamma \subset G$ a discrete subgroup of $G$, and $\varphi \in \mathfrak{a}^*$, positive in the limit cone of $\Gamma$, we define the \emph{critical exponent} associated to $\varphi$ as
 $$
 h_\Gamma (\varphi)
 =\limsup_{L\to\infty}\frac{1}{L}\log\left(\#\{\gamma\in\Gamma:\ \varphi(\kappa(\gamma))\le L\}\right).
    $$
\end{defi}
This definition differs from the one given in the introduction, where the critical exponent is defined using the Jordan projection and conjugacy classes. However, the two notions coincide (see, for instance, \cite{glorieux2023hausdorff}).

Note that the number $h_\Gamma (\varphi)$ coincides with the critical exponent of the Dirichlet series given by
\[
s \to \sum_{\gamma \in \Gamma} e^{-s\varphi(\kappa(\gamma))}.
\]
We will be exclusively concerned with the case when $\Gamma$ is the image of an Anosov representation. 

Fix $\mathcal{G}$ a strong Markov system for $(\Gamma,S)$ inducing a (unilateral) sofic shift $X_{\mathcal{G}}^+$.
Given a finite sequence $w = (x_0,\ldots,x_{n-1}) \in S^{n}$, we will define its \emph{cylinder} $[w] \subset X_\mathcal{G}^+$ to be the set of all sequences whose first $n$ entries coincide with $w$ (perhaps empty if such word is not admissible).

Given $f \in C(X_\mathcal{G}^+)$ a continuous function, we will denote by $\Sigma_n f = \sum_{i=0}^{n-1} f \circ T^i$ its partial Birkhoff sum.
Recall the following notion from thermodynamic formalism (see for instance \cite{viana2016foundations})

\begin{defi}
   \label{def:pressure}
 Given $f \in C(X_\mathcal{G}^+)$ a continuous function, we say that the \emph{pressure of $f$} is the number:
   $$P(f) = \lim_{n \to \infty}\frac{1}{n} \log \left( \sum_{w \in S^n} \exp (\sup_{\underline{x} \in [w]}\Sigma_n f(\underline{x})) \right),$$
 taking the sup as $-\infty$ if the set $[w]$ is empty.
\end{defi}

We will refer to functions on $X_\mathcal{G}^+$ as \emph{potentials}.
Notice that pressure is monotonic in the sense that given $f \leq g$ potentials, then $P(f) \leq P(g)$.

Following Quint \cite{quint2003indicateur}, we will now define a $\mathfrak{a}_\Theta^+$-valued potential which, when composed with $\varphi \in (\mathfrak{a}_\Theta)^*$, defines potentials whose pressure recovers the critical exponents, as we will soon see.

For $\sigma, \kappa$ and $\lambda$ the Iwasawa, Cartan and Jordan projections, we denote $\sigma_\Theta = p_\Theta \circ \sigma, \kappa_\Theta = p_\Theta \circ \kappa$, and $\lambda_\Theta = p_\Theta \circ \lambda$. 

\begin{defi}
\label{defi:busemannpotential}
 Given $\mathcal{G}$ a strong Markov structure for $(\Gamma,S)$ and $\rho : \Gamma \rightarrow G$ a $\Theta-$Anosov representation, we define the \emph{Busemann potential} $B: X_\mathcal{G} \rightarrow \mathfrak{a}_\Theta^+$ as
 $$B (\underline{x}) = \sigma_\Theta(\rho (x_0)^{-1}, \xi \circ p (T (\underline{x}))),$$
 where $\sigma_\Theta$ is the $\Theta-$Iwasawa cocycle, $p: X_\mathcal{G}^+ \rightarrow \partial \Gamma$ is the coding of the limit set as described in Section \ref{sec:markov}, and $\xi :\partial \Gamma \rightarrow \mathcal{F}_\Theta$ is the limit map.
\end{defi}

It is not difficult to see that $p: X_\mathcal{G}^+ \rightarrow \partial \Gamma$ is a H\"older map.
Moreover, the limit map $\xi : \partial \Gamma \rightarrow \mathcal{F}_\Theta$ is also H\"older by \cite[Theorem A.15]{bochi2019anosov}.
These two facts and the analyticity of $\sigma_\Theta$ imply that $B$ is an H\"older potential. Given $\varphi \in \mathfrak{a}_\Theta^*$, we denote $B_\varphi = \varphi \circ B$.

The following result links the critical exponent of the Poincar\'{e} series with the pressure:

\begin{prop}
   \label{prop:poincareradioespectral}
 Let $\rho : \Gamma \rightarrow G$ a $\Theta-$Anosov representation, $\mathcal{G}$ a strong Markov coding for $(\Gamma,S)$ and $B$ the Busemann potential. 
 Given $\varphi \in \mathfrak{a}_\Theta^*$ positive in the limit cone of $\rho$, we get that the equation $P(-s B_\varphi)= 0$ has a unique solution at $s = h_\rho (\varphi)$. 
\end{prop}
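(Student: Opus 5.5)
The plan is to relate Birkhoff sums of $B$ to Cartan projections of group elements by the cocycle property, and then to compare the pressure of $-sB_\varphi$ with the Poincaré series. First, uniqueness of the zero. I will check that $s \mapsto P(-sB_\varphi)$ is continuous, convex and strictly decreasing, with $P(0) = h_{top}(X^+_\mathcal{G}) > 0$ and $P(-sB_\varphi)\to -\infty$. It then suffices to show that the Birkhoff sums grow linearly, i.e.\ $\Sigma_n B_\varphi \geq an - b$ for some $a>0$. This linear growth will follow from the comparison with Cartan projections below, together with the Anosov property and the positivity of $\varphi$ on the limit cone. Positivity on the limit cone gives $\varphi(\kappa_\Theta(\rho\gamma)) \geq a'|\gamma| - b'$, using the asymptotic-cone description of $\mathcal{L}_\rho$.

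\textbf{Cocycle identity.} For $\underline{x}\in X^+_\mathcal{G}$ set $\gamma_n = x_0^{-1}\cdots x_{n-1}^{-1}$. The cocycle property of $\sigma_\Theta$ gives
$$\Sigma_n B(\underline{x}) = \sigma_\Theta\big(\rho(x_{n-1}\cdots x_0)^{-1}, \xi(p(T^n\underline{x}))\big) = \sigma_\Theta\big(\rho(\gamma_n),\xi(p(T^n \underline{x}))\big).$$
This uses equivariance of $\xi$: the point $p(T^{k}\underline{x})$ is obtained from $p(T^{k+1}\underline{x})$ by applying $x_k^{-1}$.

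\textbf{Uniform comparison with Cartan projections.} I want a uniform constant $D$ such that
$$\|\sigma_\Theta(\rho(\gamma_n),\xi(p(T^n\underline{x}))) - \kappa_\Theta(\rho(\gamma_n))\| \le D.$$
By Theorem~\ref{thm:coding}, $\xi(p(T^n\underline{x}))$ lies in the multicone $M_{v_n}$, and there are finitely many vertices. The inequality $\sigma_\Theta \leq_\Theta \kappa_\Theta$ is standard. For the reverse bound I work in each fundamental representation $\rho_\alpha$, $\alpha\in\Theta$, using Lemma~\ref{lemma:fundamentalrep}(3) and Lemma~\ref{lemma:bps}. It suffices that the line $\iota^\alpha(\xi(p(T^n\underline{x})))$ stays at angle bounded below from $S(\rho_\alpha(\gamma_n))$.

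This is the step I expect to be the main obstacle. I would argue it via the Anosov gap and transversality. Since $\gamma_n$ is a geodesic ray converging to $p(\underline{x})$, the hyperplane $S(\rho_\alpha(\gamma_n))$ is close to $\xi^{op}$ of a point near $\gamma_n^{-1}$'s direction, i.e.\ the backward endpoint. Transversality of $\xi(y)$ and $\xi^{op}(z)$ for $y\ne z$, with the Gromov product of the two points bounded because they lie on opposite sides of a geodesic through the identity, then gives a uniform angle. Alternatively, one uses that $\overline{\rho(s)^{-1}M_{v_1}}\subset M_{v_0}$ with compact closure, so that the products $\rho(\gamma_n)$ restricted to $M_{v_n}$ expand uniformly.

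\textbf{Pressure versus Poincaré series.} With the comparison in hand, $\Sigma_n B_\varphi$ equals $\varphi(\kappa_\Theta(\rho(ev)))$ up to a bounded error on each cylinder $[w]$, where $ev$ is the evaluation of the path $w$. Hence
$$P(-sB_\varphi) = \lim_n \tfrac1n \log \sum_{\text{paths of length } n \text{ in } \mathcal{G}^r} e^{-s\varphi(\kappa(\rho(ev)))}.$$
To control this sum I use two facts from Lemma~\ref{lemma:densityandfiniteness}: each group element is represented by boundedly many paths, and the paths are $c$-dense. Changing an element by a bounded amount changes $\kappa$ by a bounded amount. Thus the sum over paths of length $n$ is comparable, up to multiplicative constants, to $\sum_{|\gamma|=n} e^{-s\varphi(\kappa(\rho\gamma))}$.

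It follows that $P(-sB_\varphi)<0$ implies that the Poincaré series converges, so $s\ge h_\rho(\varphi)$. Conversely, $P(-sB_\varphi)>0$ implies divergence, since the terms of length $n$ grow exponentially, so $s\le h_\rho(\varphi)$. Continuity and strict monotonicity of the pressure then give $P(-h_\rho(\varphi)B_\varphi)=0$.
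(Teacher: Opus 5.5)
Your proposal is correct and follows essentially the same route as the paper. Both go through the cocycle identity for Birkhoff sums, then a uniform Busemann--Cartan comparison in each fundamental representation from Lemma~\ref{lemma:bps} and a uniform angle bound supplied by the Anosov property (the paper cites Lemma~2.5 of Bochi--Potrie--Sambarino for this), and finally the comparison of path sums with the Poincar\'e series via Lemma~\ref{lemma:densityandfiniteness}, which gives the convergence/divergence dichotomy. Your extra step, deriving strict monotonicity and existence of the zero from the linear bound $\Sigma_n B_\varphi \geq an-b$, is a real gain in rigor, since the paper only appeals to non-strict monotonicity via the variational principle.
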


The key ingredient is the following lemma: 

\begin{lemma}
   \label{lemma:multiconesbusemann}
 Let $\rho: \Gamma \to G$ be a $\Theta$-Anosov representation, and $\mathcal{G}$ a strong Markov coding for $(\Gamma,S)$.
 Then, there exists $C>0$ and $N \in \mathbb{Z}_{>0}$ for which: 
   $$\|\sigma_\Theta(\rho(x_{0}^{-1}\ldots x_{n-1}^{-1}), \xi \circ p (T^n(\underline{x}))) - \kappa_\Theta(\rho(x_0^{-1}\ldots x_{n-1}^{-1}))\| \leq C,$$
 for every $\underline{x} \in X_\mathcal{G}^+$ and $n > N$. 
\begin{proof}
 Composing $\rho$ with a fundamental representation, we obtain $\rho_\alpha \circ \rho : \Gamma \rightarrow SL (V_\alpha)$ for every $\alpha \in \Theta$, which we abuse notation and denote simply by $\rho$.
 Given $g \in SL(V_\alpha)$, we denote by $U(g)$ the space spanned by the first singular vector, and by $S(g)$ its orthogonal complement.

 By Lemma \ref{lemma:fundamentalrep}, the bound we want to prove is equivalent to showing the bound 
   $$|\log ||\rho (x_0^{-1} \ldots x_{n-1}^{-1}) (v)|| - \log \sigma_1 (\rho (x_0^{-1} \ldots x_{n-1}^{-1}))| \leq C,$$
 for $v \in \xi (T^n (\underline{x})) = \lim_{l\to \infty} U(x_n^{-1}\ldots x_l^{-1})$ (by Lemma 4.7 of \cite{bochi2019anosov}) of unit norm with respect to $|| \cdot ||$ any adapted norm in $V_\alpha$.

 Lemma 2.5 of \cite{bochi2019anosov} shows that there exists $\delta >0$, and $N \in \mathbb{Z}_{\geq 0}$ for which the angle

   $$\angle \left( U(\rho (x_n^{-1}\ldots x_l^{-1})), S(\rho (x_0^{-1}\ldots x_{n-1}^{-1})) \right) > \delta,$$
 for every $\underline{x} \in X_{\mathcal{G}}^+$, and $\min \{l-n, n\} > N$.
 The proof now follows from Lemma \ref{lemma:bps}.
\end{proof}
\end{lemma}

\begin{proof}[Proof of Proposition \ref{prop:poincareradioespectral}]
 Let $\mathcal{P}_n$ the set of paths $(e_1,\ldots,e_n)$ of length $n$ that are entirely contained in the recurrent piece $\mathcal{G}^r$.
 By Lemma \ref{lemma:densityandfiniteness}, the critical exponent of the Dirichlet series associated to $\varphi$ is the same as the critical exponent of the series
   $$\sum_{n \geq 0} \sum_{(e_1,\ldots,e_n) \in \mathcal{P}_n} e^{-s \varphi (\kappa (ev(e_1,\ldots,e_n)^{-1}))},$$
 because each element in the image of the evaluation of finite paths is represented by at most finitely many paths, and such elements are $c-$dense in the group.

 This Dirichlet series converges (diverges) if 
 $$\limsup_n \frac{1}{n} \log \left( \sum_{(e_1,\ldots,e_n) \in \mathcal{P}_n} e^{-s \varphi (\kappa (ev(e_1,\ldots,e_n)^{-1}))} \right) <0,$$
 ($>0$ respectively).

 On the other hand, notice that $\Sigma_n B (\underline{x}) = \sigma_\Theta (\rho (x_0^{-1}\ldots x_n^{-1}), \xi \circ p (T^n (\underline{x})))$ by the cocycle property.
 Suppose that $P(-s B_\varphi) <0$, then

   $$\limsup_n \frac{1}{n} \log \left( \sum_{|w| = n} \exp\left(\sup_{\underline{x} \in [w]}\Sigma_n B_\varphi(\underline{x})\right) \right) <0,$$

 In particular, the previous lemma states that when this happens, the condition of the previous paragraph is satisfied, so the series converges.
 The same argument shows that $P(-s B_\varphi) >0$ implies that the Dirichlet series diverges, and hence $h_\rho(\varphi) > s$.

 The variational principle (see \cite[Theorem 10.4.1]{viana2016foundations}) implies that the function $s \mapsto P(-s B_\varphi)$ is monotonic, particularly, there can be at most one solution to $P(-sB_\varphi) =0$.
\end{proof}

\begin{rmk}
A result by Quint (see Section 7 of \cite{quint2002patterson}) shows one can compute the critical exponent on the symmetric space 
  $$h^\mathbb{X}_\rho = \limsup_{t \to \infty}\frac{1}{t}\log\#\{\gamma \in \Gamma: \|\kappa(\rho(\gamma))\| \le t\}$$
   as the critical exponent of a particular functional $\varphi_0 \in \mathfrak{a}^*$ (depending on the representation). Thus, Theorem \ref{thm:general+} also provides control on this exponent. 
\end{rmk}

%% file: sections/proof.tex
\section{Separation and its consequences}
\label{sec:proof}

In this section, we define separation conditions and explain how they allow us to control the length spectrum and critical exponents of Anosov representations.

For the rest of the section, $G$ will be a real semisimple Lie group with non-compact factors, $o \in \mathbb{X}$ a basepoint with a preferred choice of Weyl chamber $\mathfrak{a}^+$ for $\mathfrak{a}$ a Cartan subspace tangent to $o$.
The basepoint induces a distance in all partial flag manifolds as described in Section \ref{subsec:symmetric}.
The separation conditions are defined as follows:


\begin{defi}
   \label{def:separatednested}
 Given $\rho : \Gamma \rightarrow G$ a representation and $\mathcal{G}$ a strong Markov structure for $(\Gamma,S)$, we say that $\rho$ is
    \begin{enumerate}
        \item \emph{$\varepsilon$-separated} with respect to $\Theta$ and $\mathcal{G}$ if there exists $(M_v)_{v \in \mathcal{V}}$ an invariant family of multicones for which $M_v$ is contained in $\mathcal{U}^- (\rho (\pi (e)^{-1}),\varepsilon)$, the complement of the $\varepsilon$-neighborhood of $\rho(\pi(e)^{-1})^-$, for every edge $e$ directed towards $v \in \mathcal{V}$.
        \item \emph{$\varepsilon$-strongly separated}, if it is $\varepsilon$-separated, and, for every $g \in S$, $\rho (g)$ is an $\varepsilon-$separated $\Theta$ loxodromic.
    \end{enumerate}
\end{defi}

It follows from Theorem \ref{thm:coding} that every $\varepsilon$-separated representation is $\Theta-$Anosov, and any $\Theta-$Anosov representation is separated for some $\varepsilon >0$ and some $\mathcal{G}$.

Recall that the cone $\mathfrak{a}_\Theta^+ \subset \mathfrak{a}_\Theta$ induces a partial order $\leq_\Theta$ in $\mathfrak{a}_\Theta$.
Moreover, define $R_\Theta \in \mathfrak{a}^+_\Theta$ via the condition that $\overline{\omega_\alpha} (R_\Theta) = 1$ for every $\alpha \in \Theta$.
We can rephrase Proposition \ref{coro:estimatesgeneralcase} as giving us estimates for the Busemann cocycle as introduced in Definition \ref{defi:busemannpotential}

\begin{lemma}
\label{lemma:key}
 Let $\rho: \Gamma \rightarrow G$ be a representation, and $R_\varepsilon \in \mathfrak{a}_+$ defined as $R_\varepsilon = - (\log \sin \varepsilon) R_\Theta$.  
 Then the following hold:
    \begin{enumerate}
        \item If $\rho$ is $\varepsilon$- separated with respect to $\mathcal{G}$ and $\Theta$, then given $J : X_\mathcal{G} \rightarrow \mathfrak{a}^+_\Theta$ the $1$-locally constant potential $J (\underline{x}) = \lambda_\Theta (\rho (x_0)^{-1})$, we have $$J - R_\varepsilon \leq_\Theta B.$$
        \item If $\rho$ is $\varepsilon$-strongly separated with respect to $\mathcal{G}$ and $\Theta$, then defining $C : X_\mathcal{G} \rightarrow \mathfrak{a}$ as $C (\underline{x}) =  \kappa_\Theta (\rho (x_0)^{-1})$, then $$C -  2R_\varepsilon \leq_\Theta B.$$
    \end{enumerate}
    \begin{proof}
 Both proofs are identical, so we only prove the first inequality.
 Let $p : X_\mathcal{G} \rightarrow \partial \Gamma$, the coding coming from the sofic shift, and $\xi : \partial \Gamma \rightarrow \mathcal{F}_\Theta$ the limit map of $\rho$.
 It follows from Theorem \ref{thm:coding} that $\xi \circ p (T (\underline{x}))$ is contained in a multicone of the vertex of $\mathcal{V}$ which has $x_0$ as a label of an outgoing edge.
 Particularly, $\xi \circ p (T (\underline{x})) \in \mathcal{U}^- (\rho (x_0)^{-1},\varepsilon)$ by the separation condition.
 This allows us to use Proposition \ref{coro:estimatesgeneralcase} to get the desired inequality.
\end{proof}
\end{lemma}

\begin{rmk}
\label{rmk:upperbound}
 Notice that $B \leq_\Theta C$ regardless of any separation condition.
\end{rmk}

The previous lemma is the key estimate for our main theorems.
\subsection{Consequences of separation}

In this section, we exhibit a series of consequences of Lemma \ref{lemma:key}.
We start by observing that such estimates yield control over the length spectrum.
Whenever a system of generators is chosen on $\Gamma$, we will denote by $|\cdot|$ its induced word metric on $\Gamma$.

\begin{prop}
\label{prop:lengthcomparison}
 Let $\rho$ be a $\varepsilon$-separated representation with respect to $\mathcal{G}$ and $\Theta$, and let $\gamma \in \Gamma$ be represented by a closed loop in the geodesic automaton associated with $(\Gamma, S)$ with labels $g_1,\ldots, g_l$, then 
   $$\sum_{i=1}^l \lambda_\Theta (\rho(g_i)) - |\gamma|_S R_\varepsilon \leq_\Theta \lambda_\Theta (\rho(g)).$$
 Moreover, if $\rho$ is strongly separated, then
   $$\sum_{i=1}^l \kappa_\Theta (\rho (g_i)) - 2|\gamma|_S R_\varepsilon \leq_\Theta \lambda_\Theta (\rho (g)) \leq_\Theta \sum_{i=1}^l \kappa_\Theta (\rho (g_i)).$$
\begin{proof}
 Recall that given $g \in G$ a $\Theta-$loxodromic element, then $\lambda_\Theta (g) = \sigma_\Theta (g,g_+)$.
 Particularly, notice that if a closed loop in the automaton represents $g$, it corresponds to an element $\overline{x}$ of $X_\mathcal{G}$ that is periodic for the shift map, with period equal to $|\gamma|_S$, and $\Sigma_n B (\underline{x}) = \lambda_\Theta (\gamma^{-1})$ by the cocycle condition.
 Taking Birkhoff sums on the estimate given by the first or second item of Lemma \ref{lemma:key} yields the desired result.
   \end{proof}
\end{prop}
Given $\varphi \in \mathfrak{a}_\Theta^*$ positive in the limit cone, use the following notations: $B_\varphi = \varphi \circ B$, $J_\varphi = \varphi \circ J$, and $C_\varphi = \varphi \circ C$.

\begin{defi}
 Given $S$ a system of generators in $\Gamma$, $\rho : \Gamma \rightarrow G$ a $\Theta-$Anosov representation, and $\varphi \in (\mathfrak{a}_\Theta)^*$ positive on the limit cone of $\rho$.
 We define 
   \begin{itemize}
    \item the \emph{approximating Jordan exponent} $h^\lambda_\rho (\varphi)$ as the number such that the pressure $P(- h^\lambda_\rho (\varphi) J_\varphi) = 0$, for $J$ the potential defined in Lemma \ref{lemma:key}.
    \item the \emph{approximating Cartan exponent} $h^\kappa_\rho (\varphi)$ is the number such that \newline $P(- h^\kappa_\rho (\varphi) C_\varphi) = 0$.
   \end{itemize}
\end{defi}

The approximating exponents are much easier to compute than the critical exponents. Since the Cartan and Jordan cocycles are locally constant, this amounts to computing the spectral radius of a specific matrix (see Appendix \ref{app:subshifts}). The following proposition tells us how good the approximation of these exponents is.

\begin{prop}
\label{prop:criticalexponents}
 Let $\rho$ be a $\varepsilon$-separated representation with respect to $\mathcal{G}$ and $\Theta$, and $\varphi \in (\mathfrak{a}_\Theta)^*$ verifying that $m^\lambda_{\rho,\varphi} := \min_{g \in S} \varphi (\lambda (\rho (g))) > \varphi (R_{\varepsilon})$, we get the upper bound
   $$h_\rho (\varphi) \leq \left( 1 - \frac{\varphi (R_{\varepsilon})}{m^\lambda_{\rho,\varphi}} \right)^{-1} h_\rho^\lambda.$$
 Moreover, if $\rho$ is $\varepsilon$-strongly separated and $m^\kappa_{\rho,\varphi} := \min_{g \in S} \varphi (\kappa (\rho (g))) > 2 \varphi (R_{\varepsilon})$, then:
   $$h_\rho^\kappa (\varphi) \leq h_\rho (\varphi) \leq \left( 1 - \frac{2\varphi (R_{\varepsilon})}{m^\kappa_{\rho,\varphi}} \right)^{-1} h^\kappa_\rho (\varphi).$$
\end{prop}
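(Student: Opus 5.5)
The plan is to combine the pointwise sandwich of Lemma \ref{lemma:key} and Remark \ref{rmk:upperbound} with the pressure characterization of Proposition \ref{prop:poincareradioespectral}. We use the monotonicity of pressure, and the fact that $s\mapsto P(-sf)$ is strictly decreasing whenever $f$ is bounded below by a positive constant.

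First, since $\varphi$ is positive on $\mathfrak{a}_\Theta^+$ (at least on the relevant cone), we apply $\varphi$ to Lemma \ref{lemma:key}(1). This gives
$$B_\varphi \geq J_\varphi - \varphi(R_\varepsilon) \geq \Big(1-\frac{\varphi(R_\varepsilon)}{m^\lambda_{\rho,\varphi}}\Big) J_\varphi,$$
where the second inequality uses $J_\varphi(\underline x)=\varphi(\lambda(\rho(x_0)^{-1}))\geq m^\lambda_{\rho,\varphi}$. Here we need $\varphi\circ\lambda_\Theta=\varphi\circ\lambda$, because $\varphi\in\mathfrak{a}_\Theta^*$ and $p_\Theta$ is an orthogonal projection. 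The minimum is over the symmetric set $S$, so inverses are included. Set $c=1-\varphi(R_\varepsilon)/m^\lambda_{\rho,\varphi}\in(0,1]$.

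For $s\geq0$, monotonicity of pressure then gives
$$P(-sB_\varphi)\leq P(-scJ_\varphi).$$
Take $s=h_\rho^\lambda/c$. The right-hand side becomes $P(-h^\lambda_\rho J_\varphi)=0$, so $P(-sB_\varphi)\le 0$. Since $s\mapsto P(-sB_\varphi)$ is decreasing and vanishes exactly at $h_\rho(\varphi)$ by Proposition \ref{prop:poincareradioespectral}, we get $h_\rho(\varphi)\le s=c^{-1}h^\lambda_\rho$. Strict monotonicity holds because $B_\varphi$ has Birkhoff sums growing linearly. Indeed, $\Sigma_n B_\varphi$ is within a bounded error of $\varphi(\kappa)$ of a word of length $n$ by Lemma \ref{lemma:multiconesbusemann}, and this grows linearly since $\varphi$ is positive on the limit cone. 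Alternatively, the lower bound $B_\varphi\ge cJ_\varphi\ge c\,m^\lambda_{\rho,\varphi}>0$ already gives strict monotonicity directly.

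The strongly separated case follows the same argument with Lemma \ref{lemma:key}(2). It gives
$$B_\varphi\geq C_\varphi-2\varphi(R_\varepsilon)\geq \Big(1-\frac{2\varphi(R_\varepsilon)}{m^\kappa_{\rho,\varphi}}\Big)C_\varphi,$$
using $C_\varphi\ge m^\kappa_{\rho,\varphi}$, which yields the upper bound. For the lower bound, Remark \ref{rmk:upperbound} gives $B_\varphi\le C_\varphi$. Hence
$$0=P(-h_\rho(\varphi)B_\varphi)\ge P(-h_\rho(\varphi)C_\varphi).$$
Since $s\mapsto P(-sC_\varphi)$ is strictly decreasing ($C_\varphi\ge m^\kappa>0$) with zero at $h^\kappa_\rho(\varphi)$, this forces $h^\kappa_\rho(\varphi)\le h_\rho(\varphi)$.

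The main point requiring care is the well-definedness of $h^\lambda_\rho$ and $h^\kappa_\rho$ as unique zeros. We need $P(0)=h_{top}>0$ finite, $P(-sf)\to-\infty$ as $s\to\infty$ for $f$ bounded below by a positive constant, and continuity in $s$. All of these are standard for locally constant potentials on the sofic shift. We also need $\varphi\circ R_\varepsilon$ to be the correct scalar after projecting; this holds because $\varphi$ is nonnegative on $\mathfrak{a}_\Theta^+$ and so preserves $\le_\Theta$.
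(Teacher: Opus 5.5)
Your proposal is correct and is essentially the paper's proof. You sandwich $B_\varphi$ between $C_\varphi$ (Remark~\ref{rmk:upperbound}) and $\bigl(1-2\varphi(R_\varepsilon)/m^\kappa_{\rho,\varphi}\bigr)C_\varphi$ (Lemma~\ref{lemma:key}; use $J_\varphi$ in the first case), then conclude by monotonicity of pressure and the uniqueness of zeros from Proposition~\ref{prop:poincareradioespectral}; your remarks on strict monotonicity and on the symmetry of $S$ only fill in details the paper leaves implicit. One tacit step is shared with the paper: applying $\varphi$ to the $\le_\Theta$ inequalities, and getting $\varphi(R_\varepsilon)\ge 0$, needs $\varphi\ge 0$ on $\mathfrak{a}_\Theta^+$, which is not among the stated hypotheses (positivity on the limit cone does not give it), though the paper relies on it in exactly the same way.
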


\begin{proof}
Notice that Proposition \ref{prop:lengthcomparison} implies that $\varphi$ is positive on the limit cone of $\rho$.
Recall from Proposition \ref{prop:poincareradioespectral}, that $h_\rho (\varphi)$ verifies that $P(- h_\rho (\varphi)B_\varphi) = 0$. 
We will proceed to prove the second chain of inequalities, as the same argument shows the first one.

Lemma \ref{lemma:key} implies
$$-t C_\varphi \leq -t B_\varphi \leq -t \left( 1 - \frac{2\varphi (R_\varepsilon)}{m_{\rho,\varphi}^\kappa}\right) C_\varphi.$$
Notice that the right-hand side is positive by hypothesis.
Using the left inequality and monotonicity of the pressure implies that since $P(-h_\rho^\kappa (\varphi) C_\varphi) = 0$, then $P(-h_\rho^\kappa (\varphi) B_\varphi) \geq 0$.
Thus, $h_\rho^\kappa (\varphi) \leq h_\rho (\varphi)$ by Proposition \ref{prop:poincareradioespectral}.
The same argument applied to the inequality on the right leads to the desired result.
\end{proof}

We hope this separation notion can be used in future computational work on Anosov representations to approximate critical exponents.
An answer to the following question would be useful in this direction:

\textbf{Question:} Given $\rho : \Gamma \rightarrow G$ a $\Theta-$Anosov representation, is there always a strong Markov structure $\mathcal{G}$ such that $\rho$ is $\varepsilon$-strongly separated for some $\varepsilon$? 

Such a generating system would allow us to use the last inequality in the previous proposition to estimate the critical exponent.
The more ambitious question would be to ask whether there are always generators $S_n$ of $\Gamma$, making the error in the approximating exponent go to zero.
Such an inductive procedure would be similar in nature to the algorithm proposed in \cite{mcmullen1998hausdorff} to approximate critical exponents in rank one.

\subsection{Strongly separated diverging representations}

Notice that fixing $\varepsilon$, the bounds on the previous section get better when the Jordan/Cartan projection of the generators gets big.
This section explores some consequences of that observation.

Given $(\rho_n) \in Hom (\Gamma, G)^\mathbb{N}$ a sequence of representations, we say that they form a \emph{(strongly) separated sequence} with respect to $\Theta \subset \Pi$ and a strong Markov coding $\mathcal{G}$ if there exists $(\varepsilon_n)$ a sequence such that $\rho_n$ is $\varepsilon_n$-(strongly) separated with respect to $\mathcal{G}$ and $\Theta$.

Putting together the results from the previous section, we get that the strongly separated conditions give us control over the critical exponent and the Thurston asymmetric length for such representations.

Let $G$ a connected, simple and center-free Lie group, $\Theta \subseteq \Pi$, and $\mathcal{X}$ a subset of Zariski-dense, $\Theta$-Anosov representations.
Suppose $\varphi \in \mathfrak{a}^*_\Theta$ is positive on the limit cone of each representation in $\mathcal{X}$. Assume also that every automorphism of $G$ leaving $\varphi$ invariant is inner. We define the \emph{Thurston asymmetric metric} as 
$$d^\varphi_{Th}(\rho_1, \rho_2) = \log\left(\sup_{[\gamma] \in [\Gamma]}\frac{h_{\rho_2}(\varphi)L^\varphi_{\rho_2}(\gamma)}{h_{\rho_1}(\varphi)L^\varphi_{\rho_1}(\gamma)}\right)$$

This notion was introduced in \cite{carvajales2024thurston}, where it was shown to be an asymmetric metric. We are now ready to state the main theorem of the paper.

\begin{thm}
\label{thm:general+}
Let $\Theta \subseteq \Pi$ with, $\varphi \in \mathfrak{a}_\Theta^*$, and $(\rho_n) \in Hom (\Gamma,G)^\mathbb{N}$ be a $\Theta-$strongly separated sequence of representations with respect to a strong Markov coding $\mathcal{G}$ such that $\frac{\min_{g \in S} \varphi (\kappa (\rho_n (g)))}{ -\log \sin \varepsilon_n} \to \infty,$ with $(\varepsilon_n)$ a sequence as above. 
Then
$$\lim_{n \to \infty} \frac{h_{\rho_n} (\varphi)}{h_{\rho_n}^\kappa (\varphi)} = 1.$$
Moreover, assuming $\iota(\Theta) = \Theta$ and that $0 \neq \inf_n h_{\rho_n}^\kappa \varphi (\kappa (\rho_n (g)))$ for any $g \in S$, then
$$\sup_n d_{Th}^\varphi (\rho_n, \rho_0) <\infty,$$
and similarly, if $\sup_n h_{\rho_n}^\kappa \varphi (\kappa (\rho_n (g))) <\infty$ for every $g \in S$, then
$$\sup_n d_{Th}^\varphi (\rho_0,\rho_n) < \infty.$$
\end{thm}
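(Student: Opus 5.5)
The plan is to derive the first claim from Proposition \ref{prop:criticalexponents} and the Thurston-metric claims from Proposition \ref{prop:lengthcomparison}.

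\textbf{Critical exponents.} Write $m_n = \min_{g\in S}\varphi(\kappa(\rho_n(g)))$. Since $\varphi(R_{\varepsilon_n}) = -\log\sin(\varepsilon_n)\,\varphi(R_\Theta)$, the hypothesis gives $2\varphi(R_{\varepsilon_n})/m_n \to 0$; here $\varphi(R_\Theta)$ is a fixed constant, and if it is nonpositive the needed inequality is even easier. So for $n$ large we have $m_n > 2\varphi(R_{\varepsilon_n})$, and the second chain of inequalities in Proposition \ref{prop:criticalexponents} gives
$$1 \le \frac{h_{\rho_n}(\varphi)}{h^\kappa_{\rho_n}(\varphi)} \le \Bigl(1 - \frac{2\varphi(R_{\varepsilon_n})}{m_n}\Bigr)^{-1} \to 1.$$

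\textbf{Thurston metric: length bounds.} Take a conjugacy class $[\gamma]$ of infinite order. By Proposition \ref{prop:cantrell}, some power $\gamma^{\pm N}$ is conjugate to the evaluation of a closed loop with labels $g_1,\dots,g_l$. Positive powers and conjugation scale the ratio $L_{\rho_n}/L_{\rho_0}$ trivially. The inversion $\gamma\mapsto\gamma^{-1}$ changes $\lambda$ by $\iota$, and this is harmless because $\iota(\Theta)=\Theta$. Proposition \ref{prop:lengthcomparison} then yields
$$\sum_i \bigl(\varphi(\kappa(\rho_n(g_i))) - 2\varphi(R_{\varepsilon_n})\bigr) \le L^\varphi_{\rho_n}(\gamma^{\pm N}) \le \sum_i \varphi(\kappa(\rho_n(g_i))).$$
For $n$ large, the subtracted term is at most $\tfrac12 m_n \le \tfrac12\varphi(\kappa(\rho_n(g_i)))$, so $L^\varphi_{\rho_n}(\gamma^{\pm N})$ is comparable, within a factor of $2$, to $\sum_i \varphi(\kappa(\rho_n(g_i)))$. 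Finitely many small $n$ only change finitely many terms of the supremum, so they can be ignored.

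\textbf{Thurston metric: comparison with $\rho_0$.} For $\rho_0$, fixed and Anosov, I only need the coarse comparison $L^\varphi_{\rho_0}(\gamma^{\pm N}) \asymp l$ with multiplicative constants. This holds because $\varphi$ is positive on the limit cone and the loops are geodesic words in the automaton.
\begin{itemize}
\item Lower bound on the Jordan length: I would try to establish it by applying Lemma \ref{lemma:multiconesbusemann} to periodic points together with Proposition \ref{prop:poincareradioespectral}. If that fails, I would use the standard fact that stable translation length is comparable to the word length of cyclically reduced loop representatives.
\item Upper bound: by subadditivity of $\kappa$.
\end{itemize}
Combining, $h_{\rho_n}L_{\rho_n}/(h_{\rho_0}L_{\rho_0})$ is bounded above by a constant times $\max_{g} h^\kappa_{\rho_n}\varphi(\kappa(\rho_n(g)))$. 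Here the first part of the proof lets us replace $h_{\rho_n}$ by $h^\kappa_{\rho_n}$ up to a factor tending to $1$. The ratio is also bounded below by a constant times $\min_g h^\kappa_{\rho_n}\varphi(\kappa(\rho_n(g)))$.

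\textbf{Reading off the two claims.} The supremum over $[\gamma]$ is controlled by these extremes, which gives the two stated bounds.
\begin{itemize}
\item $d_{Th}(\rho_n,\rho_0)$ involves the reciprocal ratio, so the hypothesis $\inf_n h^\kappa\varphi(\kappa(\rho_n(g)))>0$ bounds it.
\item $d_{Th}(\rho_0,\rho_n)$ is handled by $\sup_n h^\kappa\varphi(\kappa(\rho_n(g)))<\infty$.
\end{itemize}

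\textbf{Main obstacle.} The delicate step is the reduction from arbitrary conjugacy classes to automaton loops. It must handle the $\pm$ sign and the power $N$ through $\iota$-invariance, and it needs a uniform linear comparison $L_{\rho_0}(\gamma)\asymp$ (loop length) for the fixed representation $\rho_0$.
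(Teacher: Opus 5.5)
Your proposal is correct and follows the paper's proof. The first claim comes from Proposition \ref{prop:criticalexponents}; the Thurston bounds come from Proposition \ref{prop:cantrell}, Proposition \ref{prop:lengthcomparison} and $\iota(\Theta)=\Theta$ for negative powers. Your factor-$2$ comparison replaces the paper's $1\pm\epsilon$ from Lemma \ref{lemma:lasdistancias}, and this suffices for boundedness. The linear comparison for $\rho_0$ that you hedge on is the Kassel--Potrie estimate the paper cites; it also follows directly, since iterating a length-$l$ loop in $\mathcal{G}^r$ gives geodesic words of length $kl$, so the loop element has stable length $l$ and the Anosov bound on $\kappa$ passes to $\lambda$.
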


\begin{rmk}
   Notice that both conditions are ensured if $$\max_{g \in S} \varphi (\kappa (\rho (g)))/ \min_{g \in S} \varphi (\kappa (\rho (g)))$$ is bounded away from zero and infinity for all $n$, as stated in the introduction.
\end{rmk}

For the proof, we will need the following consequence of Proposition \ref{prop:lengthcomparison}:

\begin{lemma}
   \label{lemma:lasdistancias}
 Let $(\rho_n) \in Hom (\Gamma,G)^\mathbb{N}$ be a strongly separated sequence of representations, and $\varphi \in \mathfrak{a}^*$ verifying the conditions of Theorem \ref{thm:general+}.
 Then given $\gamma \in \Gamma$ represented by a closed loop in the geodesic automaton representing the word $\gamma = g_1\ldots g_l$, then
   $$\lim_{n \to \infty} \frac{\varphi(\lambda_\Theta (\rho_n (\gamma)))}{\sum_{i=1}^l \varphi (\lambda_\Theta (\rho_n (g_i)))} =1,$$
 moreover, the convergence is uniform on $\gamma$.
\end{lemma}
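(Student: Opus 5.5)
The plan is to sandwich both the numerator and the denominator between multiples of the same quantity, namely $\sum_{i=1}^l \varphi(\kappa_\Theta(\rho_n(g_i)))$, with error terms that depend only on $n$ and not on $\gamma$.

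Throughout, write $\varepsilon=\varepsilon_n$ and $m_n = \min_{g\in S}\varphi(\kappa(\rho_n(g)))$, and assume $\varphi$ is nonnegative on $\mathfrak{a}_\Theta^+$. The assumption is needed so that applying $\varphi$ preserves $\leq_\Theta$. In particular $\varphi(R_\Theta)\ge 0$, and $\varphi(R_{\varepsilon_n}) = -\log\sin\varepsilon_n\,\varphi(R_\Theta)$. The hypothesis of Theorem \ref{thm:general+} then says exactly that $\varphi(R_{\varepsilon_n})/m_n \to 0$.

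\emph{Numerator.} Since $\gamma$ is represented by a closed loop in the automaton, of length $l=|\gamma|_S$, the strong separation part of Proposition \ref{prop:lengthcomparison} gives
$$\sum_{i=1}^l \kappa_\Theta(\rho_n(g_i)) - 2l R_{\varepsilon_n} \leq_\Theta \lambda_\Theta(\rho_n(\gamma)) \leq_\Theta \sum_{i=1}^l \kappa_\Theta(\rho_n(g_i)).$$
I apply $\varphi$ to this chain. Writing $K_n(\gamma)=\sum_i \varphi(\kappa_\Theta(\rho_n(g_i)))$, and using $K_n(\gamma)\geq l\,m_n$, I get
$$\left(1-\frac{2\varphi(R_{\varepsilon_n})}{m_n}\right)K_n(\gamma) \le \varphi(\lambda_\Theta(\rho_n(\gamma))) \le K_n(\gamma).$$

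\emph{Denominator.} Each $\rho_n(g_i)$ is an $\varepsilon_n$-separated $\Theta$-loxodromic element. So for each $\alpha\in\Theta$, Lemma \ref{lemma:proximalestimate} applied to $\rho_\alpha(\rho_n(g_i))$, combined with Lemma \ref{lemma:fundamentalrep}, yields
$$\kappa_\Theta(\rho_n(g_i)) - R_{\varepsilon_n} \leq_\Theta \lambda_\Theta(\rho_n(g_i)) \leq_\Theta \kappa_\Theta(\rho_n(g_i)).$$
Summing over $i$ and applying $\varphi$ gives
$$\left(1-\frac{\varphi(R_{\varepsilon_n})}{m_n}\right)K_n(\gamma)\le \sum_{i=1}^l\varphi(\lambda_\Theta(\rho_n(g_i)))\le K_n(\gamma).$$

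\emph{Conclusion.} Dividing the two chains, the ratio in the statement lies in the interval
$$\left[\,1-\frac{2\varphi(R_{\varepsilon_n})}{m_n},\ \left(1-\frac{\varphi(R_{\varepsilon_n})}{m_n}\right)^{-1}\right].$$
The endpoints of this interval are independent of $\gamma$ and both tend to $1$. This proves the limit together with its uniformity in $\gamma$.

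The main point needing care is bookkeeping rather than analysis. First, $\varphi$ must be checked to be monotone for $\leq_\Theta$; this follows from its positivity on $\mathfrak{a}^+_\Theta$, or on the relevant cones. Second, the orientation conventions must match: Proposition \ref{prop:lengthcomparison} is phrased for $\rho(g_i)$ versus the inverses $\rho(x_0)^{-1}$ used in the potentials. If a mismatch appears, I would fix it using $\kappa(g^{-1})=\iota\kappa(g)$ and $\lambda(g^{-1})=\iota\lambda(g)$, applied to the loop read in reverse, which is again a closed loop in $\mathcal{G}^{op}$.
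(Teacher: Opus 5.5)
Your proposal is correct and follows the paper's argument. Proposition~\ref{prop:lengthcomparison} sandwiches $\varphi(\lambda_\Theta(\rho_n(\gamma)))$ against $\sum_{i}\varphi(\kappa_\Theta(\rho_n(g_i)))$ with a relative error of order $\varphi(R_{\varepsilon_n})/\min_{g\in S}\varphi(\kappa(\rho_n(g)))$ that does not depend on $\gamma$, and one then passes from $\kappa$ to $\lambda$ on the finitely many generators. The only difference is that you make this last step explicit and quantitative via Lemma~\ref{lemma:proximalestimate}, obtaining $\kappa_\Theta-R_{\varepsilon_n}\leq_\Theta\lambda_\Theta\leq_\Theta\kappa_\Theta$ on each $\rho_n(g_i)$, whereas the paper simply observes that $\varphi(\kappa(\rho_n(g)))/\varphi(\lambda(\rho_n(g)))\to 1$ for each $g\in S$.
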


\begin{proof}
 Dividing the inequality provided by Proposition \ref{prop:lengthcomparison} by $\sum_{i=1}^l \varphi (\kappa (\rho_n (g_i)))$, we get that      $$\vline \frac{\varphi (\lambda_\Theta (\rho_n (\gamma)))}{\sum_{i=1}^{|\gamma|} \varphi (\kappa (\rho_n (g_i)))} -1 \vline \leq \frac{l \varphi (R_{\varepsilon_n})}{\sum_{i=1}^l \varphi (\kappa (\rho_n (g_i)))} \leq \frac{\varphi (R_{\varepsilon_n})}{\min_{g \in S} \varphi (\kappa (g))}.$$
 Notice that our hypothesis implies that the right-hand side goes to zero with $n$ and does not depend on $\gamma \in \Gamma$. 
 This shows the result after noticing that $$\lim_{n \to \infty} \frac{\varphi (\kappa (\rho_n (g)))}{\varphi (\lambda (\rho_n (g)))} = 1$$ for any $g \in S$.
\end{proof}

\begin{proof}[Proof of Theorem \ref{thm:general+}]
The convergence of the critical exponent $h_{\rho_n} (\varphi)$ to $h_{\rho_n}^\kappa$ follows immediately from Proposition \ref{prop:criticalexponents} and the observation that the error term goes to zero by our assumptions.

To prove the second part, we will begin by controlling the quotients 
$$\frac{h_{\rho_n} (\varphi) L_{\rho_n}^\varphi ([\gamma])}{h_{\rho_0} (\varphi) L_{\rho_0}^\varphi ([\gamma])},$$
for $[\gamma] \in [\Gamma]$ a conjugacy class.
Notice that such a quotient is independent of whether we take the conjugacy class of an element or the conjugacy class of a positive power.
By Proposition \ref{prop:cantrell}, there exists $\gamma'$ and $N \in \ZZ$ such that $[\gamma^N] = [\gamma']$, such that $\gamma' = g_1 \ldots g_l$ is represented by a closed loop in $\mathcal{G}^r$. 

First, we assume $N > 0$.
By definition, the stable norm is given by $|\gamma|_\infty = \lim_{n \to \infty} \frac{|\gamma^n|}{n}$. We have that $|\gamma^N|_\infty = N|\gamma|_\infty = l.$
By the previous lemma and the first item, we get that given $\varepsilon >0$, there is $n$ large enough for which
$$1- \varepsilon \leq \frac{h_{\rho_n} (\varphi) \varphi (\lambda_\Theta (\gamma'))}{h_{\rho_n}^\kappa (\varphi) \sum^l_{i=1} \varphi (\kappa (\rho_n (g_i))) } \leq 1 + \varepsilon,$$
for some $\gamma$ as chosen in the paragraph above.
Moreover, this $n$ is uniform in $\gamma$.

On the other hand, since $\rho_0$ is Anosov, we get from Kassel-Potrie \cite{fanny2022} that there exists $A,B>0$ for which:
$$A |\gamma'|_\infty - B \leq \varphi (\lambda_\Theta (\rho_0 (\gamma'))) \leq A |\gamma'|_\infty + B,\: \forall \gamma \in \Gamma.$$

Putting everything together, we get that
$$\frac{h_{\rho_0} (\varphi) L_{\rho_0}^\varphi ([\gamma])}{h_{\rho_n} (\varphi) L_{\rho_n}^\varphi ([\gamma])} =  \frac{h_{\rho_0} (\varphi) L_{\rho_0}^\varphi ([\gamma'])}{h_{\rho_n} (\varphi) L_{\rho_n}^\varphi ([\gamma'])} \leq (1-\varepsilon)^{-1} \frac{h_{\rho_0} (\varphi) (A|\gamma'|_\infty + B)}{h_{\rho_n}^\kappa (\varphi) |\gamma'|_\infty \min_{g \in S} \varphi (\kappa \rho_n (g))}.$$
Notice that under the condition $0 \neq \inf_n h_{\rho_n}^\kappa \varphi (\kappa (\rho_n (g))) = K$, we get
$$\frac{h_{\rho_0} (\varphi) L_{\rho_0}^\varphi ([\gamma])}{h_{\rho_n} (\varphi) L_{\rho_n}^\varphi ([\gamma])} \leq \frac{h_{\rho_0} (\varphi) (A + B)}{K(1-\varepsilon)}.$$
This is a uniform bound for every homotopy class $[\gamma] \in [\Gamma]$ and $n$ large enough, which proves the claim.

If $N < 0$, we get that $$L^\varphi_{\rho_n}(\gamma) = \frac{L^{\varphi \circ \iota}_{\rho_n}(\gamma'^{-N})}{-N},$$
where $\iota$ is the opposition involution. Because we assumed $\Theta = \iota(\Theta)$, we get that $\varphi \circ \iota$ is well-defined in $\mathfrak{a}_\Theta$ and positive in the limit cone, so $\varphi \circ \iota$ lies in the hypothesis of Lemma \ref{lemma:lasdistancias}, and the same calculation yields the same result.

Finally, notice that
$$\frac{h_{\rho_n} (\varphi) L_{\rho_n}^\varphi ([\gamma])}{h_{\rho_0} (\varphi) L_{\rho_0}^\varphi ([\gamma])} \leq (1 + \varepsilon) \frac{h_{\rho_n}^\kappa (\varphi) |\gamma|_\infty \max_{g \in S} \varphi (\kappa (\rho_n (g)))}{h_{\rho_0} (\varphi)(A |\gamma|_\infty - B)},$$
where up to changing $[\gamma]$ to $[\gamma^n]$, we can always assume that the right-hand side is positive. 
The last claim follows similarly to the previous case.
\end{proof}

For examples of strongly divergent representations, one can reinterpret these results geometrically using equivariant graphs on the symmetric space.

\begin{rmk}
Suppose $\Gamma$ is the free group on $k$ generators, and $S$ is the standard symmetric set of generators.
In this case, given our basepoint $o \in \mathbb{X}$ and $\rho$ a representation, one can embed the Cayley graph $Cay (\Gamma, S)$ as a $\rho-$equivariant tree in the symmetric space $T \subset \mathbb{X}$ in such a way that its vertex set coincides with the orbit of $o$ with geodesic edges.

Such trees inherit a metric from the symmetric space, or more generally, an asymmetric metric from a functional $\varphi \in \mathfrak{a}_\Theta^*$ by declaring $d_\varphi (go,g'o) = \varphi (\kappa (g^{-1}g'))$ for neighboring vertices.
Now given $\rho_n$ a sequence of strongly separated representations verifying the conditions of the previous theorem, then the length of a loop $(T_n/\rho_n (\Gamma), d^\varphi)$ in the homotopy class on $\gamma \in \Gamma$ approximates the length $\varphi (\lambda_\Theta (\rho (\gamma)))$, and correspondingly, the critical exponent is asymptotic to those of $\rho$.
We can think of this graph as a ``spine'' for $\mathbb{X}/\rho_n (\Gamma)$, and should remind the reader of the classical spinal graph construction for non-compact hyperbolic surfaces (see \cite{bowditch1988natural}).
\end{rmk}

Notice that the conditions of our second item are met whenever the potentials $h_{\rho_n} (\varphi) B_{\rho_n,\Theta}$ converge to a limiting potential in $C(X_\mathcal{G})$.
Thinking in terms of the geometric interpretation explained above, this case corresponds to the situation in which the spinal graph obtained by rescaling by the critical exponent converges to another metric graph.
It is natural to think of such paths in the character variety as being incomplete, and their completion to correspond to the limiting rescaled graph.

Under the strong separation conditions, we also get good control over the limit cone of the representations.
The following is a direct consequence of Lemma \ref{lemma:lasdistancias}.

\begin{coro}
\label{coro:limitcones}
Let $(\rho_n) \in Hom (\Gamma, G)^\mathbb{N}$ be a strongly separated sequence of representations with respect to $\Theta \subseteq \Pi$ and a strong Markov coding $\mathcal{G}$, such that $\iota(\Theta) = \Theta$. Assume that $\frac{\min_{g \in S} \alpha (\kappa (\rho_n(g)))}{-\log \sin \varepsilon_n} \to \infty$ for every $\alpha \in \Theta$.
Under these conditions, given $\mathcal{C}^\Theta_n \subset \mathfrak{a}_\Theta$ the cone generated by $\kappa_\Theta (\rho_n (g))$, for $g \in S$, we get that:
$$d_{H} (p_\Theta (\mathcal{L}_{\rho_n}), \mathcal{C}^\Theta_n) \to_n 0,$$
for $d_{H}$ the Hausdorff distance in $\mathfrak{a}_\Theta$.
\end{coro}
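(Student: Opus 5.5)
Throughout, I read the Hausdorff distance as being between the traces of the two closed cones on the unit sphere of $\mathfrak{a}_\Theta$, which is the natural meaning for cones. I will prove the two approximate inclusions separately. Both will come from a single quantitative fact: under the hypothesis $\min_{g\in S}\alpha(\kappa(\rho_n(g)))/(-\log\sin\varepsilon_n)\to\infty$ for all $\alpha\in\Theta$, the additive error $2l R_{\varepsilon_n}$ in Proposition \ref{prop:lengthcomparison} is negligible \emph{in direction}, not just in $\varphi$-value.

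To make this precise, fix a norm on $\mathfrak{a}_\Theta$. Since $\mathfrak{a}_\Theta^+$ is a pointed cone, there is $c>0$ with $\norm{v}\ge c\,\max_{\alpha\in\Theta}\alpha(v)$ for $v\in \mathfrak{a}^+_\Theta$. Hence for any $v_1,\dots,v_l\in\mathfrak{a}_\Theta^+$,
$$\norm{\textstyle\sum_i v_i}\ge c\, l\,\min_i\max_\alpha \alpha(v_i).$$
Also $\norm{R_{\varepsilon}} = -\log\sin\varepsilon\,\norm{R_\Theta}$. Now let $x,y\in\mathfrak{a}^+_\Theta$ with $0\le_\Theta x-y\le_\Theta 2lR_{\varepsilon_n}$. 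Then $\norm{x-y}\le C\,l(-\log\sin\varepsilon_n)$, and the normalized directions satisfy
$$\Bigl\|\tfrac{x}{\norm x}-\tfrac{y}{\norm y}\Bigr\|\le \frac{2\norm{x-y}}{\norm x}.$$
Whenever $x=\sum_{i=1}^l\kappa_\Theta(\rho_n(g_i))$, this quotient is bounded by a quantity $\delta_n\to 0$ that is independent of $l$ and of the word.

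\textbf{Limit cone near $\mathcal{C}^\Theta_n$.} $p_\Theta(\mathcal{L}_{\rho_n})$ is the closed convex cone spanned by the vectors $\lambda_\Theta(\rho_n(\gamma))$. Let $\gamma$ be non-torsion. By Proposition \ref{prop:cantrell} there is a loop $g_1\cdots g_l$ in $\mathcal{G}^r$ representing a conjugate $\gamma'$ of $\gamma^{\pm N}$. Proposition \ref{prop:lengthcomparison} (strong case) gives
$$0\le_\Theta \sum_{i=1}^l\kappa_\Theta(\rho_n(g_i))-\lambda_\Theta(\rho_n(\gamma'))\le_\Theta 2lR_{\varepsilon_n}.$$
By the previous paragraph, the direction of $\lambda_\Theta(\rho_n(\gamma'))$ is $\delta_n$-close to that of a vector in $\mathcal{C}^\Theta_n$.

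Next I relate $\gamma'$ back to $\gamma$. For the sign $+$, $\lambda_\Theta(\gamma^N)=N\lambda_\Theta(\gamma)$, so the direction is unchanged. For the sign $-$, $\lambda(\gamma^{-1})=\iota\lambda(\gamma)$. Since $\iota(\Theta)=\Theta$ and $S$ is symmetric with $\kappa(g^{-1})=\iota\kappa(g)$, the cone $\mathcal{C}^\Theta_n$ is $\iota$-invariant; as $\iota$ is an isometry, the estimate transfers to $\lambda_\Theta(\rho_n(\gamma))$.

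Thus every generating direction of the limit cone lies in the $\delta_n$-neighbourhood of the convex set $\mathcal{C}^\Theta_n$ (intersected with the sphere). Passing to convex combinations and closures keeps this within $O(\delta_n)$: everything lives in the pointed cone $\mathfrak{a}^+_\Theta$, so normalizing a convex combination distorts distances by a bounded factor.

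\textbf{$\mathcal{C}^\Theta_n$ near the limit cone.} It suffices to treat the generators $\kappa_\Theta(\rho_n(g))$, $g\in S$, and then use convexity of $\mathcal{L}_{\rho_n}$ exactly as above. Since $\rho_n(g)$ is an $\varepsilon_n$-separated $\Theta$-loxodromic, its attracting flag lies in $\mathcal{U}^-_\Theta(\rho_n(g),\varepsilon_n)$, because $d(g_+,g_-)>2\varepsilon_n$. Using $\lambda_\Theta(g)=\sigma_\Theta(g,g_+)$, Proposition \ref{coro:estimatesgeneralcase} gives
$$0\le_\Theta\kappa_\Theta(\rho_n(g))-\lambda_\Theta(\rho_n(g))\le_\Theta 2R_{\varepsilon_n}.$$
This is the case $l=1$ of the direction estimate, so $\kappa_\Theta(\rho_n(g))$ is $\delta_n$-close in direction to $\lambda_\Theta(\rho_n(g))\in p_\Theta(\mathcal{L}_{\rho_n})$.

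\textbf{Main obstacle.} The delicate step is the first direction estimate: the error $2lR_{\varepsilon_n}$ grows linearly in $l$ and must be beaten uniformly over all words. The key is the lower bound $\norm{\sum_i v_i}\gtrsim l\min$, which works because all summands lie in the pointed cone $\mathfrak{a}^+_\Theta$. It is here that the hypothesis must be made for every $\alpha\in\Theta$, rather than for a single functional $\varphi$.
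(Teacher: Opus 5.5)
Your proof is correct and follows the paper's argument. The paper derives the corollary directly from Lemma \ref{lemma:lasdistancias}, which is the functional-by-functional (ratio) form of the same word-uniform additive error $2lR_{\varepsilon_n}$ from Proposition \ref{prop:lengthcomparison} that you turn into a direction estimate, with $\iota(\Theta)=\Theta$ handling the negative powers from Proposition \ref{prop:cantrell} as in the proof of Theorem \ref{thm:general+}. One correction to your closing remark: your bound only uses $\min_{g}\max_{\alpha\in\Theta}\alpha(\kappa_\Theta(\rho_n(g)))$, together with $\alpha\circ p_\Theta\geq\alpha$ on $\mathfrak{a}^+$ for $\alpha\in\Theta$ (a step you left implicit), so your route needs the growth hypothesis for only one $\alpha\in\Theta$; it is the paper's coordinatewise-ratio route that needs it for every $\alpha$.
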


Even though our definition of strongly separated representation gives us the freedom to choose a set of generators $S$ (however complicated), we can prove that the hypothesis of Theorem \ref{thm:general+} forces $\Gamma$ to be (virtually) free.

\begin{prop}
\label{prop:onlyfree}
 Let $\rho_n : \Gamma \rightarrow G$ be a strongly separated sequence of representations with respect to a strong Markov coding $\mathcal{G}$, and $\Theta$, such that $ \frac{-\log\sin\epsilon_n}{\min_{g \in S} \varphi (\kappa (\rho(g)))} \to 0$ for $\varphi \in \mathfrak{a}_\Theta^*$, then $\Gamma$ is virtually free.
\end{prop}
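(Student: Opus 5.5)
The plan is to show that the Gromov boundary $\partial\Gamma$ has topological dimension zero, and then invoke the classical fact that a hyperbolic group with totally disconnected boundary is virtually free. That fact follows from Stallings' ends theorem together with Dunwoody accessibility, or directly from Kapovich–Benakli. Since $\xi_n:\partial\Gamma\to\mathcal{F}_\Theta$ is a homeomorphism onto the limit set for every $n$, it is enough to find a single $n$ for which $\xi_n(\partial\Gamma)$ has Hausdorff dimension $<1$ with respect to the visual metric on $\mathcal{F}_\Theta$. Covering dimension is bounded above by Hausdorff dimension, so this gives dimension zero.

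To estimate the Hausdorff dimension, I would use the coding of Theorem \ref{thm:coding}. For each admissible word $w=(x_0,\ldots,x_{l-1})$ the cylinder $[w]$ satisfies
$$\xi_n(p([w]))\subset \rho_n(x_0^{-1})\cdots\rho_n(x_{l-1}^{-1})M_{v_l}.$$
The number of such words is at most $|S|^l$. The core step is a uniform diameter bound of the form
$$\operatorname{diam}\bigl(\rho_n(x_0^{-1}\cdots x_{l-1}^{-1})M_{v_l}\bigr)\le K\,e^{-l\,(c\,m_n - D_n)},$$
where $m_n=\min_{g\in S}\varphi(\kappa(\rho_n(g)))$ and $D_n=O(-\log\sin\varepsilon_n)$.

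I would prove this bound by iterating one-step contraction estimates in each fundamental representation $\rho_\alpha$, $\alpha\in\Theta$. By $\varepsilon_n$-separation, each intermediate set $\rho_n(x_{k}^{-1}\cdots x_{l-1}^{-1})M_{v_l}$ lies inside $\mathcal{U}^-(\rho_n(x_{k-1}^{-1}),\varepsilon_n)$. On that set, Lemma \ref{lem:implicaproximal} and Corollary \ref{lemma:busemannprox} show that $\rho_\alpha(\rho_n(x_{k-1}^{-1}))$ acts on $\mathbb{P}(V^\alpha)$ as a contraction. Its Lipschitz constant is of order
$$\frac{\sigma_2}{\sigma_1}\bigl(\rho_\alpha\rho_n(x_{k-1}^{-1})\bigr)\,\sin(\varepsilon_n)^{-2}=e^{-\alpha(\kappa(\rho_n(x_{k-1}^{-1})))}\sin(\varepsilon_n)^{-2}.$$
Multiplying these $l$ factors gives the bound above, with the simple roots $\alpha\in\Theta$ playing the role of $\varphi$. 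The resulting cover then shows
$$\dim_H\xi_n(\partial\Gamma)\le\frac{\log|S|}{c\,m_n-D_n},$$
which tends to $0$ under the hypothesis $(-\log\sin\varepsilon_n)/m_n\to 0$.

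The main obstacle is that the hypothesis controls $\varphi\circ\kappa$, whereas the contraction rate is governed by $\min_{\alpha\in\Theta}\alpha\circ\kappa$. To bridge this, I would first try to use strong separation itself. Each $\rho_n(g)$ is $(\Theta,\varepsilon_n)$-separated, and by Lemma \ref{lemma:proximalestimate} its Jordan and Cartan projections agree up to $O(-\log\sin\varepsilon_n)$ in each $\overline{\omega}_\alpha$. The hope is that, combined with $\lambda_\Theta(\rho_n(g))$ lying in the interior of $\mathfrak{a}^+_\Theta$ and with $\varphi$ being a fixed functional on $\mathfrak{a}_\Theta$, this yields $\alpha(\kappa_\Theta(\rho_n(g)))\gtrsim \varphi(\kappa(\rho_n(g)))-O(-\log\sin\varepsilon_n)$. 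If that comparison fails in general, I would fall back on the positivity of $\varphi$ on the limit cones: it forces the relevant root gaps of the generators to grow at a rate comparable to $m_n$, up to a constant depending on $\varphi$.

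A secondary point is that the set $M_{v_l}$ at the start of the iteration must have bounded diameter and sit inside $\mathcal{U}^-$. This holds by the definition of $\varepsilon_n$-separation for every vertex with an incoming edge, and in the recurrent graph every vertex has one.
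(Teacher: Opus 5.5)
Your route is the same as the paper's. You cover the limit set by the iterated multicone images $\rho_n(x_0^{-1}\cdots x_{l-1}^{-1})M_{v_l}$, show they shrink at a rate given by the root gaps of the generators up to an $O(-\log\sin\varepsilon_n)$ loss, beat the exponential count of paths, and conclude by total disconnectedness and Stallings. The argument breaks exactly at the step you flag yourself: \textbf{neither proposed bridge from $\varphi\circ\kappa$ to $\alpha\circ\kappa$ works.}

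\textbf{The first bridge is false.} Take $SL_3(\mathbb{R})$ with $\Theta=\Pi$, $\varphi=\omega_1$, and $g_t=\mathrm{diag}(e^{t+1},e^{t},e^{-2t-1})$, $t\ge 0$.
\begin{itemize}
\item In the first fundamental representation, $d(g_+,g_-)=\pi/2>1.2$. A unit $v$ with $[v]\in\mathcal{U}^-(g_t,0.6)$ has $|v_1|\ge\sin 0.6$, so $g_tv$ makes angle at most $\arctan(\cot(0.6)/e)\approx 0.49<0.6$ with $e_1$.
\item In $\Lambda^2$ the eigenvalue gap is $e^{3t+1}$, so the same check passes.
\end{itemize}
Hence $g_t$ is $(\Pi,0.6)$-separated with regular Jordan projection. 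Yet $\omega_1(\kappa(g_t))=t+1\to\infty$ while $\alpha_1(\kappa(g_t))=1$. Separation of a single element only gives an $\varepsilon$-dependent lower bound on its root gaps, never growth.

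\textbf{The second bridge carries no information.} Positivity of $\varphi$ on $\mathcal{L}_{\rho_n}$ is a qualitative, $n$-dependent condition on Jordan projections of the whole group, not on Cartan projections of the generators. It is automatic when $\varphi>0$ on $\mathfrak{a}^+\setminus\{0\}$. For instance, for $\Theta=\{\alpha_1\}$ in $SL_d(\mathbb{R})$ one has $\mathfrak{a}_\Theta^*=\mathbb{R}\,\omega_1$. The hypothesis then only says $\log\sigma_1(\rho_n(g))\gg-\log\sin\varepsilon_n$, which says nothing about $\sigma_1/\sigma_2$, the quantity that drives your contraction.

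The paper's proof does not close this gap either. It writes the rate as $m_{\rho_n,\alpha}$ and simply chooses $n$ so that it beats $\delta(\Gamma,S)$; that is, it tacitly uses the hypothesis with a simple root $\alpha\in\Theta$ in place of $\varphi$. Its exponent also carries the $\log\sin\varepsilon_n$ correction with the wrong sign; yours is right.

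Under the hypothesis $\min_{g\in S}\alpha(\kappa(\rho_n(g)))/(-\log\sin\varepsilon_n)\to\infty$ for some $\alpha\in\Theta$, your argument is complete. A single root suffices, since $\iota^\alpha\circ\xi_n$ is already injective. Your Lipschitz formulation is the right way to make the paper's ``apply the estimate $k$ times'' precise. From Corollary~\ref{lemma:busemannprox} and $\norm{\Lambda^2 h}=\sigma_1\sigma_2(h)$, the Lipschitz constant on $\mathcal{U}^-$ in the metric $\sin d$ is $e^{-\alpha(\kappa(g))}\sin^{-4}(\varepsilon_n)$. Your $\dim_H<1$ criterion is equivalent to the paper's ``total diameter $\to 0$'', i.e. $\mathcal{H}^1=0$.

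Two small repairs are still needed:
\begin{itemize}
\item A word can label up to $|\mathcal{V}|$ paths ending at different vertices $v_l$, so count paths rather than words.
\item $p(X^+_{\mathcal{G}})$ need not be all of $\partial\Gamma$. Cover $\partial\Gamma$ by the finitely many translates $\gamma_0\,p(X^+_{\mathcal{G}})$, with $|\gamma_0|$ bounded by the length of the transient part of $\mathcal{G}$. These are images under the diffeomorphisms $\rho_n(\gamma_0)$, so Hausdorff dimension is unchanged.
\end{itemize}
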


\begin{proof}
Let $\mathcal{G}^r$ be the recurrent part of the strong Markov coding. 
Since $\mathcal{G}$ is finite, there exists $p > 0$ such that every path 
of length greater than $p$ eventually enters $\mathcal{G}^r$. In particular, 
every boundary point of $\Gamma$ is the limit of a geodesic ray that is 
eventually contained in $\mathcal{G}^r$, so the shadows of paths in 
$\mathcal{G}^r$ cover $\partial\Gamma$.

Let $(e_0,\ldots,e_k)$ be a path in $\mathcal{G}^r$ starting at $v_0$. 
We define 
$$C(e_0,\ldots,e_k) = \rho'_n(\pi(e_0)^{-1}\ldots\pi(e_k)^{-1})M_{v_k}.$$
Observe that 
$$C(e_0,\ldots,e_k) = \rho'_n(\pi(e_0)^{-1}) C(e_1,\ldots,e_k).$$
By the invariance of the multicone family, $C(e_1,\ldots,e_k) \subset M_{v_1}$, 
and by the separation condition, $M_{v_1} \subset \mathcal{U}^-(\rho'_n(\pi(e_0)^{-1}),\varepsilon_n)$.
Lemmas~\ref{lemma:proximalestimate} and~\ref{lem:implicaproximal} then imply, applying the estimate $k$ times, that 
$$\diam C(e_0,\ldots,e_k) \leq Ae^{-k(m_{\rho_n,\alpha} - \log\sin\varepsilon_n)},$$
for the constant $A = \max_{v \in \mathcal{V}} \diam M_{v}$, which is uniformly bounded on $n$, and independent of the path.

Let $\delta(\Gamma, S)$ be the exponential growth rate of $\Gamma$ with respect to $S$. By Lemma~\ref{lemma:densityandfiniteness}, the number of paths of length $k$ 
in $\mathcal{G}^r$ is bounded by $Ke^{k\delta(\Gamma,S)}$, where $K$ is a constant that depends on the number of vertices in the strong Markov coding, and on the constant of density of the recurrent graph.
Fix $n$ large enough so that $m_{\rho_n,\alpha} - \log\sin\varepsilon_n > \delta(\Gamma,S)$.
For such $n$, the total diameter of all shadows of length $k$ is bounded by
$$Ke^{k\delta(\Gamma,S)} \cdot Ae^{-k(m_{\rho_n,\alpha} - \log\sin\varepsilon_n)} \to 0$$
as $k \to \infty$. Since the shadows $C(e_0,\ldots,e_k)$ cover $\partial\Gamma$ 
and their total diameter goes to zero, any connected subset of $\partial\Gamma$ 
of positive diameter leads to a contradiction. Hence $\partial\Gamma$ is totally 
disconnected, and by Stallings' theorem~\cite{stallings1968torsion}, $\Gamma$ is virtually free.
\end{proof}

The proof also shows that the limit set is supported in a ball around finitely many flags for sufficiently large $ n$.

We wrap up the section by remarking that some partial results can be obtained under (non-strong) separation hypotheses on a sequence.

\begin{prop}
\label{thm:nested}
 Let $(\rho_n) \in Hom (\Gamma ,G)^\mathbb{N}$ be a sequence of separated representations with respect to $\Theta$ and $\mathcal{G}$, and $\varphi \in (\mathfrak{a}_\Theta)^*$ satisfying that $\frac{-\log \sin \varepsilon_n}{\min_{g \in S} \varphi (\lambda (g))} \to 0$.
 Under these conditions, we get that for any $\delta >0$, there exists $n_0$ such that
 $$h_{\rho_n} (\varphi) \leq (1 + \delta) \frac{C(\mathcal{G})}{\min_{g \in S} \varphi (\lambda (\rho (g)))},$$
 for every $n \geq n_0$, where $C(\mathcal{G})$ is a constant depending on the combinatorics of $\mathcal{G}$.
 \begin{proof}
 Proposition \ref{prop:criticalexponents} implies
   $$\limsup_n \frac{h_{\rho_n} (\varphi)}{h_{\rho_n}^\lambda (\varphi)} \leq 1.$$
 Recall that the approximating Jordan exponent can be computed in terms of the Jordan potential $J_\varphi$.
 Bounding $J_\varphi$ below by the constant potential defined by $\min_{g \in S} \varphi (\lambda (g))$, we get the desired result.
 \end{proof}
\end{prop}

\begin{rmk}
 Notice that the hypotheses of the previous proposition are not enough to guarantee separation.
 This can be seen with the matrices
    $$g_n = 
    \begin{pmatrix}
 n^2 & 0 & 0\\
    0 & n & n^2 \\
    0 & 0 & \frac{1}{n^3}
    \end{pmatrix}.$$
 Notice that the attracting eigenline is $[e_1]$, the repelling hyperplane of $g_n$ is given by $x = 0$, however $g_n (1:0:1) = (1:1:1/n^5) \to (1:1:0) \neq (1:0:0)$.
 Moreover, one can check that $\alpha_1 (\lambda (g_n)) \to \infty$, but $\alpha_1 (\kappa (g_n))$ stays bounded. 
\end{rmk}

We do not know whether the analogous restriction ($\Gamma$ virtually free) holds in this situation as well.

%% file: sections/sl3.tex
\section{Application to convex projective structures on a pair of pants}
\label{sec:sl3}
The goal of this section is to apply the main theorem of the paper in a concrete example. We explain the Fock Goncharov parametrization of convex projective structures on a pair of pants, and we estimate the degeneration of the critical exponent along families of representations.

\subsection{Projective invariants}
The space of $PGL_2(\mathbb{R})$-orbits of ordered $4$-tuples of pairwise distinct points in $\mathbb{P}^1$ is one-dimensional and can be parametrized with the cross-ratio. 
The normalization 
$$CR(\infty,-1,0,c)=c$$
uniquely defines this invariant, where $\infty, -1,0,c$ denotes the given $4$-tuple of points in a chosen affine chart.
Particularly, notice that the cross-ratio of $(A, B, C, D)$ is positive if and only if the points are cyclically ordered.
In higher-dimensional projective spaces, this defines an invariant of $4-$tuples of aligned points.

We will be interested in constructing invariants of configurations of flags $F = (p,\ell) \in \mathbb{P}^2 \times (\mathbb{P}^2)^*$ in $\mathbb{R}^3$.
Recall that a flag can be represented by a pair $(v,\alpha) \in \mathbb{R}^3 \times (\mathbb{R}^3)^*$ such that $\alpha (v) = 0$, and this representation is well-defined up to rescaling on each factor.
We say that two flags $F_1, F_2$ are in general position if $p_1,p_2$ and $\ell_1 \cap \ell_2$ are in general position.

The set of $SL_3(\RR)$ orbits of triples of flags in pairwise general position can be parametrized by one single invariant, called the \emph{triple ratio}, defined via
$$TR (F_1,F_2,F_3) = \frac{\alpha_1 (v_2)\alpha_2 (v_3)\alpha_3 (v_1)}{\alpha_1 (v_3)\alpha_2 (v_1)\alpha_3 (v_2)},$$
where $(v_i,\alpha_i)$ represent $F_i$.
This invariant measures how far away the three lines joining $p_i$ with $\ell_{i-1} \cap \ell_{i+1}$ are from being coincident at a point.

\subsection{Fock-Goncharov parametrization}

The goal of the section is to review Fock-Goncharov's parametrization of the space of marked convex projective structures $\mathbb{RP} (\Sigma)$ on a non-compact orientable surface $\Sigma$ with boundary.
We closely follow the exposition of the original article \cite{fock2007moduli}. Proofs of unreferenced claims can be found there.

Sometimes, we do not normalize matrices by the determinant. This is not a problem when considering the projective action.

Recall that a convex-projective structure on a surface $\Sigma$, is a $(\mathbb{P}^2, SL_3 (\mathbb{R}))$ geometric structure on a surface in the sense of Ehresmann-Thurston (see \cite[Chapter 3]{thurston2022geometry}), verifying that the image of the developing map is a convex subset of $\mathbb{P}^2$.
We assume that each boundary component develops into a line segment.

A convex-projective structure is determined by a pair $(D,\rho)$, where $D$ is the developing map $D: \widetilde{\Sigma} \rightarrow \mathbb{P}^2$, equivariant with respect to a representation $\rho: \pi_1 (\Sigma) \rightarrow SL_3 (\mathbb{R})$ that we call the \emph{holonomy representation} (and this pair is unique up to the action of $SL_3 (\mathbb{R})$).
It follows from Fock-Goncharov that the holonomies of these structures are always Anosov representations and therefore admit a limit map $(\xi,\xi^*): \partial \pi_1 (\Sigma) \rightarrow \mathcal{F}$.
Moreover, this limit map satisfies some positivity properties that we will soon describe.

Denote by $\hat{\Sigma}$ the punctured surface obtained from $\Sigma$ by removing the boundary.
We can always choose an ideal triangulation $\mathcal{T}$ of $\hat{\Sigma}$ with the property that every edge connects punctures.
This triangulation can be realized in $\Sigma$ as a geodesic lamination $\mathcal{L}_\mathcal{T}$ after choosing a hyperbolic metric. 
To see this, tighten the edges of the triangulation into geodesics that spiral infinitely toward the boundary components. 
We assume the boundary geodesics receive an orientation from $\Sigma$, and the leaves of $\mathcal{L}$ spiral in the direction opposite to this orientation.

We will now explain how such a choice of triangulation leads to a parametrization of the space $\mathbb{RP} (\Sigma)$ using the geometry of the limit map.
Notice that the auxiliary hyperbolic structure on $\Sigma$ identifies $\partial \pi_1 (\Sigma)$ with a closed subset of $\partial \mathbb{H}^2$.

\begin{description}
\item[Triangle invariants] 
Lift an ideal triangle $t$ bounded by the lamination defined by the triangulation to $\mathbb{H}^2$ (corresponding to a face of $\mathcal{T}$).
Such a lift has vertices at three points $x_1,x_2,x_3 \in \partial \mathbb{H}^2$ ordered counter-clockwise.
Using the limit map, we can define three flags $F_i = (\xi(x_i),\xi^*(x_i))$, $i=1,2,3$ and use this triple to define an invariant of $T$ via:
$$X_t = \text{TR} (F_1,F_2,F_3).$$
This number is independent of the choice of lift and hyperbolic metric.
Moreover, Fock-Goncharov proved that $X_t$ is always strictly positive.
\item[Edge invariants] We will assign to every oriented edge $e$ of $\mathcal{T}$ a pair of invariants.
 Choose $L_e$ a leaf of $\mathcal{L}_\mathcal{T}$ corresponding to $e$, and lift it to $\mathbb{H}^2$.
 Such a lift will be adjacent to a pair of ideal triangles, and let $x_1,x_2,x_3,x_4$ be a counter-clockwise ordered tuple of points corresponding to the vertices of such triangles in such a way that $x_1$ and $x_3$ are the positive and negative endpoints of our leaf.
 This defines four flags $F_i = (\xi,\xi^*) (x_i) = (p_i,\ell_i)$, $i=1,2,3,4$ in general position.

 Given such a $4-$tuple of points, we can define a shear invariant $Z_e$ as the cross ratio of the lines $\ell_3, p_3p_2, p_3p_1$ and $p_3p_4$ (notice that these are four lines passing through a point, therefore represent four aligned points in the dual projective plane).
 Similarly, we define $W_e$ as the cross-ratio of the lines $\ell_1, p_1p_4, p_1 p_3$ and $p_1 p_2$.
 Fock-Goncharov proved that in the Hitchin component, all these numbers are positive.
\end{description}

Such invariants parametrize the space of convex projective structures $\mathbb{RP} (\Sigma)$.

\begin{thm}[{\cite[Theorem 2.7]{fock2007moduli}}]
 Let $\Sigma$ be a compact oriented surface with boundary.
 Given triangulation $\mathcal{T}$ of $\widehat{\Sigma}$ with $F$ faces, and $E$ edges, we get that the map $\mathbb{RP}(\Sigma) \rightarrow \mathbb{R}_{>0}^F \times \mathbb{R}_{>0}^{2E}$ sending a real convex projective structure to its $X_t$, $W_e$ and $Z_e$ parameters is a homeomorphism.
\end{thm}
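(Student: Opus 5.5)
The statement is Fock--Goncharov's theorem, and the plan is to prove it by building an explicit inverse to the invariant map $\mathbb{RP}(\Sigma) \to \mathbb{R}_{>0}^F \times \mathbb{R}_{>0}^{2E}$. The inverse reconstructs an equivariant configuration of flags from the parameters, triangle by triangle. Let $\widetilde{\mathcal{T}}$ be the lift of the lamination $\mathcal{L}_\mathcal{T}$ to $\mathbb{H}^2$, and let $\mathcal{P} \subset \partial\mathbb{H}^2$ be the countable set of ideal vertices of its triangles. These vertices are endpoints of lifts of the boundary geodesics, since the leaves spiral into the boundary.

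\emph{Forward map: well-defined and continuous.} For a convex projective structure with holonomy $\rho$, the limit map $(\xi,\xi^*)$ restricted to $\mathcal{P}$ gives flags in pairwise general position. I would deduce this from strict convexity of the developed domain together with the Anosov property stated in the paper. Positivity of $X_t$, $Z_e$ and $W_e$ should then follow from the cyclic order on $\partial\mathbb{H}^2$: the points $\xi(x_i)$ lie in cyclic order on the boundary of a convex set, so the relevant cross-ratios and triple ratios are positive. Continuity in $\rho$ follows from the continuous dependence of limit maps on Anosov representations, since the invariants are rational functions of the flags.

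\emph{Inverse map: reconstruction.} Given positive parameters, first normalize the flags of one ideal triangle $t_0$ of $\widetilde{\mathcal{T}}$. Up to $SL_3(\mathbb{R})$, a triple of generic flags is determined by its triple ratio, so $X_{t_0}$ fixes it. Next, for an adjacent triangle sharing the edge $e$ with $t_0$, three flags $F_1, F_2, F_3$ are known and the fourth flag $F_4$ is determined. Its point $p_4$ is fixed by the two cross-ratios $Z_e$ and $W_e$, which single out the lines $p_3p_4$ and $p_1p_4$ in the pencils through $p_3$ and $p_1$. Its line $\ell_4$ is then fixed by the triple ratio of the new triangle. Propagating along the dual tree of $\widetilde{\mathcal{T}}$ gives a map $\mathcal{P} \to \mathcal{F}$. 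This map is well defined because the dual graph is a tree, so there are no consistency conditions to check. For each deck transformation $\gamma$, the translated configuration has the same invariants, so there is a unique $\rho(\gamma) \in SL_3(\mathbb{R})$ realizing the translation, and $\rho$ is a homomorphism. Uniqueness at each step also gives injectivity of the forward map, up to knowing that a convex structure with boundary is determined by its holonomy, which I would take from Goldman.

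\emph{Main obstacle: surjectivity.} The hard step is to show that every positive parameter vector yields a convex projective structure with this holonomy. I would first show that positivity of all invariants makes every finite subconfiguration of flags, read in cyclic order, a positive tuple in the sense of Fock--Goncharov. I would try to prove this by writing the gluing matrices as products of totally positive elementary matrices in a suitable basis. Given that, the points $\xi(\mathcal{P})$ lie on the boundary of a convex domain $\Omega$, namely the closure of the union of the projective triangles with vertices $p_i$. This union should be an increasing union of convex polygons, and $\rho(\pi_1\Sigma)$ acts on it properly discontinuously. The boundary holonomies are products of totally positive matrices, hence diagonalizable with distinct positive eigenvalues, and their fixed flags are the accumulation points of the spiraling vertices. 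This is what makes each boundary component develop into a segment.

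Finally, the parameter space is an open cell of dimension $F+2E = 8|\chi(\Sigma)|$, which equals $\dim \mathbb{RP}(\Sigma)$ by Goldman. So the continuous injective map is a homeomorphism onto its image by invariance of domain, and surjectivity completes the proof.
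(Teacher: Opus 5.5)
The paper does not prove this statement. It quotes Fock--Goncharov and only records the explicit inverse (the matrices $T(X_t)$, $E(Z_e,W_e)$ on the blown-up dual graph), and your propagation of flags along the dual tree is the geometric version of that inverse. The genuine gap is in your surjectivity step, at the claim that ``the boundary holonomies are products of totally positive matrices, hence diagonalizable with distinct positive eigenvalues''.

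A peripheral monodromy fixes the flag at the spiraling vertex. So in an adapted basis it is a product of the upper triangular blocks $E(Z,W)T(X)$ displayed in the paper. These blocks are not totally positive. Their product has diagonal entries $\prod_i Z_i^{-1}X_i$, $1$, $\prod_i W_i$, which is the computation behind Lemma~\ref{lemma:eigenvalues}, and these are independent positive quantities.

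\emph{Cusps.} Choose positive parameters with $\prod_i Z_i^{-1}X_i=\prod_i W_i=1$. Then the monodromy is unipotent. Its $(1,2)$ and $(2,3)$ entries are positive, so it is a single Jordan block with a unique fixed point in $\mathbb{P}^2$. That hole is a cusp: it does not develop into a segment, and the representation has no Anosov limit map.

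\emph{Eigenvalue order.} Even when the three eigenvalues are distinct, their order relative to the vertex flag is arbitrary. Your forward map uses the dynamics-preserving limit map with the spiraling direction fixed. It therefore only produces parameters for which the vertex flag is the attracting flag of the peripheral holonomy. That forces inequalities of the type $\prod_i Z_i^{-1}X_i<1<\prod_i W_i$ at every hole, which is exactly what the paper has to assume in Lemma~\ref{lemma:eigenvalues}. The $PGL_2$ analogue is familiar: shears around a puncture sum to $\pm$ the boundary length, with the sign forced by the spiraling direction. So for the objects you work with, the map is at best a homeomorphism onto a proper open subset. No argument can give surjectivity onto all of $\mathbb{R}_{>0}^{F+2E}$.

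The missing idea is the framing. Fock--Goncharov's theorem concerns framed convex projective structures: a suitable monodromy-invariant flag is chosen at each hole, and holes may be cusps. It is this space, not the unframed one with a fixed spiraling convention, that the coordinates identify with $\mathbb{R}_{>0}^{F+2E}$. You can repair the proof in one of two ways.
\begin{enumerate}
\item Replace $\mathbb{RP}(\Sigma)$ by the framed space. Define the forward map from the chosen framing flags rather than from Anosov limit maps. In the surjectivity step, treat each type of peripheral monodromy separately, with unipotent monodromy giving cusps.
\item Keep $\mathbb{RP}(\Sigma)$ and restrict the target to the open set cut out by the eigenvalue-ordering inequalities at each hole. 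This is the regime the paper actually uses in Section~\ref{sec:sl3}.
\end{enumerate}
In either case, total positivity is available for monodromies of non-peripheral loops, never for peripheral ones, since those must fix a flag.

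Two smaller points. First, positivity of the triple ratios in your forward map needs the lines $\xi^*(x_i)$ to be supporting lines of the convex domain, not merely the points $\xi(x_i)$ to be cyclically ordered. Second, appealing to the Anosov property ``stated in the paper'' is circular here, since the paper attributes it to Fock--Goncharov.
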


In fact, the inverse map is completely explicit. 
We conclude the section by briefly reviewing how to recover the holonomy $\rho$ of a convex projective structure starting from the Fock-Goncharov parameters $X_t$, $W_e$, and $Z_e$, for every $t$ face of the triangulation, and $e$ and edge (see \cite[Section 5]{fock2007moduli}).

Start by picking a point inside each face $t$, and two points in each edge $e$, which we assume is an oriented edge.
Assign to the point in $t$ the number $X_t$, and to the pair of points in $e$, the pair $W_e$, $Z_e$, in such a way that $W_e$ lies after $Z_e$ with the order inherited from the orientation of $e$.
Now consider the graph dual to the triangulation, so that each edge passes through the middle of the two marked points corresponding to the $Z'$s and $W'$s.
Finally, ``blow up'' each vertex of this dual graph to obtain a triangle contained entirely within the corresponding simplex of our triangulation, ensuring each vertex of this new triangle connects to exactly one outgoing dual edge (see figure \ref{fig:fockgoncharov}).

The procedure from the last paragraph produces an embedded graph on $\Sigma$.
We will orient each triangle inside a face according to the orientation of $\Sigma$, and the remaining vertices (coming from the dual graph) in whichever way we prefer. 
We will refer to an edge inside the triangle as a $ t$-edge, and the rest as $ e$-edges.

To reconstruct the holonomy, we assign to each $t$ and $e-$edges the matrices:
$$T(X_t) = 
\begin{pmatrix}
0 & 0 & 1\\
0 & -1 & -1\\
X_t & 1 + X_t & 1
\end{pmatrix}, \: 
E(Z_e,W_e) = 
\begin{pmatrix}
   0 & 0 & Z_e^{-1} \\
   0 & -1 & 0\\
 W_e & 0 & 0
\end{pmatrix},$$
where we assume the $t-$edge is contained in the face $t$, and the oriented $e-$edge verifies that the point corresponding $Z_e$ lies to the right of this edge, and $W_e$ to its left.

Notice that the graph we just created is a spine of $\Sigma$, and is therefore homotopy equivalent to it. 
In particular, any $\gamma \in \pi_1 (\Sigma)$ can be represented by a loop in the graph passing through a favorite vertex $v_0$.
We define $\rho$ by assigning to such an element the product of matrices on the graphs that it traverses (mind the orientations).
One can show that this defines a representation $\rho$ corresponding to the holonomy of a convex projective structure.

Remarkably, one gets that
$$E(Z,W) T(X) = 
\begin{pmatrix}
 Z^{-1} X & Z^{-1} (1+X) & Z^{-1} \\
   0 & 1 & 1\\
   0 & 0 & W
\end{pmatrix}.
$$
is a positive, upper triangular matrix.
This will allow us to pin down the positions of eigenvectors and compute the eigenvalues of the holonomies of boundary elements.

\begin{lemma}
\label{lemma:eigenvalues}
 Let $\mathfrak{a} \subset \mathfrak{sl}_3 (\mathbb{R})$ be the standard Cartan subspace, and $\lambda : SL_3 (\mathbb{R}) \rightarrow \mathfrak{a}^+$ the Jordan projection.
 Then given $g = E(Z_k,W_k) T(X_k) \ldots E(Z_1,W_1) T(X_1)$, such that $\prod_{i=1}^l Z_i^{-1} X_i < 1$ and $\prod_{i=1}^l W_i > 1$, then we get that
 $$\alpha_1 (\lambda (g)) = \sum_{i=1}^l \log W_i, \: \alpha_2 (\lambda (g)) = \sum_{i=1}^l \log Z_i - \sum_{i=1}^l \log X_i .$$
 Moreover, if $W_i^n,Z_i^n$, and $X_i^n$ are sequences verifying the inequalities above, and such that $Z_i^n, W_i^n \to \infty$, $(Z_i^n)^{-1} X_i^n \to 0$, then the stable, central and unstable direction of $g_n = E(Z_k^n,W_k^n) T(X_k^n) \ldots E(Z_1^n,W_1^n) T(X_1^n)$ converge to $e_1$, $e_2$ and $e_3$ respectively. 
\begin{proof}
 The first part of the proof is a straightforward computation.
 As for the second part, notice that under these conditions, the first row of the blocks $E(Z_i^n, W_i^n) T(X_i^n)$ goes to zero, when $n \to \infty$.
 Multiplying such upper triangular matrices, we get something of the form
   $$g_n = 
   \begin{pmatrix}
 o(1) & o(1) & o(\prod_{i=1}^l W_i^n) \\
   0 & 1 & o(\prod_{i=1}^l W_i^n)\\
   0 & 0 & \prod_{i=1}^l W_i^n
   \end{pmatrix}.$$
 The stable, central, and unstable directions of these matrices are approximately $e_1$, $e_2$, and $e_3$, respectively.
\end{proof}
\end{lemma}

\subsection{Strongly separated holonomies in the pair of pants}
In this section, we specialize to the case where $\Sigma$ is a pair of pants.
The goal is to detect an explicit family of convex projective structures whose holonomy is strongly separated, as in Definition \ref{def:separatednested}.

Recall that by equipping a pair of pants with a hyperbolic metric with totally geodesic boundary, we can decompose this pants as a union of two ideal triangles, each bounded by a lamination that spirals around the cuffs in the direction opposite to their orientation (as in Figure \ref{fig:fockgoncharov}).
This triangulation parametrizes the space of convex projective structures using eight parameters: two triple ratios $X_1$ and $X_2$, and six cross ratios $ Z_1$, $ W_1$, $ Z_2$, $ W_2$, $ Z_3$, and $W_3$.

\begin{center}
    \begin{figure}[h!]
        \includegraphics[scale=1]{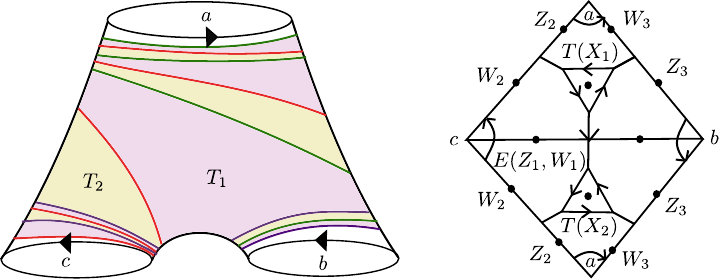}
        \caption{Lamination on the pair of pants, and the graph recovering the holonomy. Notice that the edges are identified to get a punctured sphere.}
        \label{fig:fockgoncharov}
    \end{figure}
\end{center}

Moreover, the procedure described in the previous section allows us to explicitly compute the holonomy $\rho_{\vec{X},\vec{W},\vec{Z}}$ associated with these parameters.
A quick inspection of Figure \ref{fig:fockgoncharov} shows
\begin{equation}
\begin{split}
&\rho_{\vec{X},\vec{W},\vec{Z}}(a^{-1}) = E(W_3,Z_3) T(X_2) E(Z_2,W_2) T(X_1),\\
&\rho_{\vec{X},\vec{W},\vec{Z}}(b^{-1}) = Ad (T(X_1)) (E(W_1,Z_1) T(X_2) E(Z_3,W_3) T(X_1)),\\
&\rho_{\vec{X},\vec{W},\vec{Z}}(c^{-1}) = Ad(T(X_1)^{-1}) ( E(W_2,Z_2) T(X_2) E(Z_1,W_1) T(X_1)),
\end{split}
\end{equation}
where $a,b,c$ are the cuffs of the pants.
Notice that $\pi_1 (\Sigma) \cong F_2$, and $a,b,c$ represent a set of generators verifying the relation $abc = id$ (just as in Example \ref{ex:codings}).
We will denote by $S$ the symmetric system of generators defined by $a,b$, and $c$.

Our goal is to impose conditions on the parameters that guarantee that our representation is strongly separated.

\begin{defi}
\phantom{v}
\begin{itemize}
\item We will refer to a convex projective structure on the pairs of pants as \emph{admissible} if its Fock-Goncharov parameters verify the equations
\begin{equation}
\label{eq:admissible}
\begin{split}
   & \log W_i + \log Z_{i-1} - \log X_1 + \log X_2 >0,\\
   & \log Z_i + \log W_{i-1} > 0,
\end{split}
\end{equation}   
for every $i = 1,2,3$ (indices taken mod $3$).

\item A sequence $(\rho_n)\in Hom(\pi_1 (\Sigma),SL_3 (\mathbb{R}))^\mathbb{N}$ of holonomies of convex projective structures is called \emph{shear divergent} if the left hand sides in the inequalities of Equation \ref{eq:admissible} diverge to infinity with $n$.
\end{itemize}
\end{defi}

Thanks to Lemma \ref{lemma:eigenvalues}, these conditions allow us to pin down the positions for the eigenvectors of the holonomy.
Moreover, we have the following theorem:
\begin{thm}
\label{thm:divergingpants}
 Let $(\rho_n) \in Hom(\pi_1 (\Sigma), SL_3 (\mathbb{R}))^\mathbb{N}$ be a sequence of shear diverging convex projective structures such that $0< \inf_n X_1^n < \sup X_1^n < \infty$.
 Under these conditions, if $\varepsilon>0$, there exists $n_0$ such that $\rho_n$ is  $\varepsilon$-strongly separated with respect to the system of generators $S$.
\end{thm}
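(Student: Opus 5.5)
The plan is to verify the two conditions of Definition~\ref{def:separatednested} directly, for $\Theta=\Pi=\{\alpha_1,\alpha_2\}$ and the strong Markov structure for $S=\{a^{\pm1},b^{\pm1},c^{\pm1}\}$ with $abc=id$ from Example~\ref{ex:codings}. We work in the two fundamental representations $\mathbb{R}^3$ and $(\mathbb{R}^3)^*$, so that full flags are pairs (point, line). The strategy is a limiting argument: show that the attracting and repelling flags of the six generators converge, along a shear divergent sequence, to explicit limiting flags in general position, while the generators become arbitrarily contracting. One caveat on $\varepsilon$: as written, the statement cannot hold for every $\varepsilon>0$, since $d(g_+,g_-)>2\varepsilon$ forces $\varepsilon$ below a fixed gap. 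I would therefore prove it for every $\varepsilon$ smaller than a constant $\varepsilon_0$ determined by the limiting configuration. That constant is uniform because $X_1$ ranges in a compact subset of $(0,\infty)$; I would pass to subsequences with $X_1^n\to X_1^\infty$ to make the limits explicit.

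\emph{Step 1 (generators).} Write $\rho_n(a^{-1})$, $\rho_n(b^{-1})$, $\rho_n(c^{-1})$ as conjugates, by $id$, $T(X_1)$ and $T(X_1)^{-1}$, of products of two upper triangular blocks $E(Z,W)T(X)$, as in the displayed formulas for the holonomy. The admissibility inequalities \eqref{eq:admissible} are exactly the hypotheses $\prod Z_i^{-1}X_i<1$ and $\prod W_i>1$ of Lemma~\ref{lemma:eigenvalues}, up to the normalisation by $X_2/X_1$. Shear divergence then gives $\alpha_1(\lambda),\alpha_2(\lambda)\to\infty$ for each generator. The second part of Lemma~\ref{lemma:eigenvalues}, extended by the same triangular computation to the present normalisation where $X_2$ need not be bounded, shows that the eigen-directions of the unconjugated products converge to $e_3,e_2,e_1$. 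Consequently the attracting flag converges to $([e_3],\langle e_3,e_2\rangle)$ and the repelling flag to $([e_1],\langle e_1,e_2\rangle)$. Conjugating by $T(X_1^{\pm1})$ gives the limits for $b$ and $c$, and inverting swaps the roles. The result is six explicit limiting flags $F^\pm_g$.

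\emph{Step 2 (transversality in the limit).} For the combinatorics to work, the following must hold. For every edge $v_0\xrightarrow{x}v_1$ and every label $y$ on an edge leaving $v_1$, the limiting attracting flag of $\rho(y^{-1})$ must be transverse to the limiting repelling flag of $\rho(x^{-1})$. The only exceptions allowed are the pairs that the automaton forbids, namely cancellations and the relation-induced ones coming from $abc=id$. In addition, $F^+_g$ must be transverse to $F^-_g$. These are finitely many determinant computations involving $e_1,e_2,e_3$ and the explicit matrix $T(X_1^\infty)$; positivity of the Fock--Goncharov data should make all the relevant minors nonzero. The minimum of the resulting distances defines $\varepsilon_0$.

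\emph{Step 3 (separation and multicones).} Apply Lemma~\ref{lem:implicaproximal} in each of $\mathbb{R}^3$ and $(\mathbb{R}^3)^*$. The ratio $\norm{g|_{g_-}}/\lambda_1(g)$ equals $e^{-\alpha_i(\lambda(g))}$ times a factor controlled by the convergence of the eigenbases. This ratio tends to $0$, so for $n\ge n_0$ each $\rho_n(g)$ is an $\varepsilon$-separated $\Theta$-loxodromic. For the multicones, define $M_v$ as the union, over labels $y$ of edges leaving $v$, of the $\varepsilon$-balls around the attracting flag of $\rho_n(y^{-1})$. Step~2 gives $M_{v_1}\subset\mathcal{U}^-(\rho_n(x^{-1}),\varepsilon)$ for every edge $v_0\xrightarrow{x}v_1$. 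Then $\varepsilon$-separation of $\rho_n(x^{-1})$ gives
\[
\overline{\rho_n(x^{-1})M_{v_1}}\subset\mathcal{U}^+(\rho_n(x^{-1}),\varepsilon)\subset M_{v_0},
\]
which is exactly the invariance condition. It remains to check that each $M_v$ is a genuine multicone, that is, avoids some line and some dual line. This follows because the finitely many limiting attracting flags involved lie in a common affine chart.

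I expect the main obstacle to be Step~2 together with the uniformity in Step~1. The difficulty is identifying precisely which pairs of limiting flags the automaton requires to be transverse, and proving that transversality holds uniformly even though only $X_1$, and not $X_2$, is assumed bounded. I would first try to absorb $X_2$ into the triangular blocks, using that only the combinations appearing in \eqref{eq:admissible} enter the dominant entries.
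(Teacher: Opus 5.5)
Your plan follows the paper's proof step for step. Eigen-directions come from the upper triangular blocks (Lemma \ref{lemma:eigenvalues}). Each $\rho_n(g)$ is shown $\varepsilon$-separated via Lemma \ref{lem:implicaproximal}, which is the paper's Lemma \ref{lem:estimativasl3} and Corollary \ref{coro:estanseparadas}. A transversality check is made at each vertex. $M_v$ is the union of $\varepsilon$-balls around the attracting flags of $\rho_n(y^{-1})$ for the labels $y$ leaving $v$, and invariance follows by ping-pong. Taking outgoing labels is what the invariance step needs, and it is what the paper's Steps 2--3 use. Your restriction to small $\varepsilon$ matches the paper's ``$\varepsilon$ small enough''.

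The genuine gap is in Step 1. Under shear divergence the eigen-directions need not converge to $e_3,e_2,e_1$, and the obstruction has nothing to do with $X_2$. Write $\rho_n(a^{-1})=E(W_3,Z_3)T(X_2)\,E(Z_2,W_2)T(X_1)$. The last two rows of this product are $(0,1,1+W_2)$ and $(0,0,Z_3W_2)$. So the top eigenline is spanned by $(x,y,1)$ with $y=(1+W_2)/(Z_3W_2-1)$. This depends on $Z_3$ by itself, whereas \eqref{eq:admissible} only controls $Z_3W_2$, so your idea of absorbing everything into the admissibility combinations cannot work. For example, take $X_1=X_2=1$, $Z_1=k$, $Z_3=1/k$ and $Z_2=W_1=W_2=W_3=k^2$. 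All six quantities in \eqref{eq:admissible} tend to $+\infty$. Yet the attracting line of $\rho_k(a^{-1})$ is close to $[(k^{-1},k,1)]$, which tends to $[e_2]$. Its repelling plane is exactly $\langle e_1,e_2\rangle$, since the other eigenvectors are $e_1$ and $(x',1,0)$. Hence $d(g_+,g_-)\to 0$, and $\rho_k(a^{-1})$ is not $\varepsilon$-separated for any fixed $\varepsilon$. So no refinement of the triangular computation gives the statement from shear divergence and bounded $X_1$ alone.

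What your argument actually needs, and what the paper's argument also needs, is the hypothesis of the second part of Lemma \ref{lemma:eigenvalues} for each triangular factor. For $a$ this means $Z_2,W_2,Z_3,W_3\to\infty$ with $Z_2^{-1}X_1\to 0$ and $W_3^{-1}X_2\to 0$, and similarly for $b$ and $c$. The paper gets the eigen-direction limits only by citing that lemma through Corollary \ref{coro:estanseparadas}. These conditions do hold in Theorem \ref{thm:sl3}, where all $Z_i,W_i\to\infty$ at fixed triple ratios.

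Under these stronger hypotheses your Step 2 checks out. At $v(a)=v(c^{-1})$:
\begin{itemize}
\item the limiting attracting points of $\rho(a^{-1})$ and $\rho(b)$ are both $e_3$, and those of $\rho(b^{-1})$ and $\rho(c)$ are both $e_1-e_2+e_3$;
\item these avoid the common limiting repelling plane $\langle e_1,e_2\rangle$ of $\rho(a^{-1})$ and $\rho(c)$;
\item dually, their attracting planes avoid the common repelling line $[e_1]$.
\end{itemize}
The other vertices follow by the symmetry of the coding.
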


We will start by showing that $\rho (s)$ is an $\varepsilon$-loxodromic element for every $s \in S$.
In $SL_3 (\mathbb{R})$, this means loxodromic respect to the whole root system.
The following lemma will be useful:

\begin{lemma}
   \label{lem:estimativasl3}
 Let $(g_n)_{n\in\mathbb N}$ be a sequence in $SL_3(\mathbb R)$ of loxodromic matrices with eigenvalues $\lambda_{1,n},\ \lambda_{2,n},\ \lambda_{3,n}$
 ordered by modulus.
 Let $\ell_{i,n}\in \mathbb P(\mathbb R^3)$ be the eigenline corresponding to $\lambda_{i,n}$.
   
 Assume that
 \[
 \frac{|\lambda_{1,n}|}{|\lambda_{2,n}|}\xrightarrow[n\to\infty]{}+\infty,
 \qquad
 \frac{|\lambda_{2,n}|}{|\lambda_{3,n}|}\xrightarrow[n\to\infty]{}+\infty,
 \]
 and that $\ell_{1,n}\to \ell_1$, $\ell_{2,n}\to \ell_2$, $\ell_{3,n}\to \ell_3$ in $\mathbb P(\mathbb R^3)$, where $\ell_1,\ell_2,\ell_3$ are pairwise distinct.
   
 Then for every sufficiently small $\varepsilon>0$ there exists $n_0\in\mathbb N$ such that for all $n\ge n_0$ the element $g_n$ is $\varepsilon$-separated for every root.
\end{lemma}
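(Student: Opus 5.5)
The plan is to reduce the statement to the criterion of Lemma~\ref{lem:implicaproximal}, applied in the two fundamental representations of $SL_3(\mathbb{R})$. These are the standard representation on $\mathbb{R}^3$ (for $\alpha_1$) and $\Lambda^2\mathbb{R}^3$ (for $\alpha_2$), each with the $SO(3)$-invariant inner product induced from the standard one. Recall that $g$ is $\varepsilon$-separated for every root if and only if both $g$ and $\Lambda^2 g$ are $\varepsilon$-separated proximal maps. By Lemma~\ref{lem:implicaproximal}, it suffices to check two things for each representation:
\[
d(g_+,g_-) > 2\varepsilon \quad\text{and}\quad \frac{\norm{g|_{g_-}}}{\lambda_1(g)} \le \sin^2(\varepsilon).
\]

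First I would record the eigendata. For $g_n$ on $\mathbb{R}^3$ we have $(g_n)_+ = \ell_{1,n}$ and $(g_n)_- = \ell_{2,n}\oplus\ell_{3,n}$. For $\Lambda^2 g_n$, the top eigenvalue is $\lambda_{1,n}\lambda_{2,n}$ with eigenline $\ell_{1,n}\wedge\ell_{2,n}$. The invariant complement is $\mathrm{span}(\ell_{1,n}\wedge\ell_{3,n},\ \ell_{2,n}\wedge\ell_{3,n})$, with eigenvalues $\lambda_{1,n}\lambda_{3,n}$ and $\lambda_{2,n}\lambda_{3,n}$.

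Here there is a point I expect to have to address explicitly. Pairwise distinctness of $\ell_1,\ell_2,\ell_3$ does not by itself prevent them from being coplanar, and in that case $d(\ell_{1,n},\ell_{2,n}\oplus\ell_{3,n})\to 0$. I would therefore use, or add as the intended reading, the hypothesis that the limits are in general position, i.e.\ that they span $\mathbb{R}^3$. This is the situation of the application, where the limits are $e_1,e_2,e_3$ by Lemma~\ref{lemma:eigenvalues}.

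Under this assumption, the limit quantities $d(\ell_1,\ell_2\oplus\ell_3)$ and $d(\ell_1\wedge\ell_2,\ \mathrm{span}(\ell_1\wedge\ell_3,\ell_2\wedge\ell_3))$ in $\mathbb{P}(\Lambda^2\mathbb{R}^3)$ are both positive, since the wedge products of a basis form a basis. I would choose $\varepsilon$ so small that $2\varepsilon$ is less than half of both of these limits. By continuity of these distances in the eigenlines, the first condition then holds for all large $n$.

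The main step, and the one I expect to be the real obstacle, is a uniform bound on $\norm{g_n|_{(g_n)_-}}$. Let $v_{i,n}\in\ell_{i,n}$ be unit vectors, and let $P_n$ be the matrix with columns $v_{i,n}$. Then $P_n\to P$, where $P$ has columns spanning $\ell_1,\ell_2,\ell_3$ and is invertible by general position. Hence $\norm{P_n}\,\norm{P_n^{-1}}\le K$ for $n$ large. Writing $g_n = P_n D_n P_n^{-1}$, this gives $\norm{g_n|_{\ell_{2,n}\oplus\ell_{3,n}}}\le K|\lambda_{2,n}|$, so
\[
\frac{\norm{g_n|_{(g_n)_-}}}{\lambda_1(g_n)}\le K\frac{|\lambda_{2,n}|}{|\lambda_{1,n}|}\to 0.
\]
The identical argument with $\Lambda^2 P_n$ yields the bound
\[
K'\,\frac{|\lambda_{1,n}\lambda_{3,n}|}{|\lambda_{1,n}\lambda_{2,n}|}=K'\frac{|\lambda_{3,n}|}{|\lambda_{2,n}|}\to 0.
\]
Thus there is $n_0$ beyond which both ratios are at most $\sin^2\varepsilon$. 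Lemma~\ref{lem:implicaproximal} then gives that $g_n$ is $\varepsilon$-separated in both fundamental representations, i.e.\ $(\Pi,\varepsilon)$-separated.
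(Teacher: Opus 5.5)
Your proof is correct and follows essentially the paper's route: reduce to Lemma~\ref{lem:implicaproximal} in both fundamental representations, and bound $\norm{g_n|_{(g_n)_-}}$ by a conditioning constant times $|\lambda_{2,n}|$. The paper uses $g\mapsto (g^t)^{-1}$ instead of $\Lambda^2$, and computes the conditioning of the $2\times 2$ eigenbasis of $\ell_{2,n}\oplus\ell_{3,n}$ explicitly through its singular values $\sqrt{1\pm\cos\theta_n}$.

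Your added general-position hypothesis is genuinely necessary, and adding it improves on the paper. The paper only fixes $\varepsilon<m/4$ with $m$ the minimal pairwise distance of the $\ell_i$, and this does not control $d(\ell_{1,n},\ell_{2,n}\oplus\ell_{3,n})$. For coplanar limits, $(g_n^t)^{-1}$ also fails to satisfy the same hypotheses, contrary to what the paper claims. Eigenlines $[e_1+e_2+\tfrac1n e_3],[e_1],[e_2]$ with eigenvalues $n^2,1,n^{-2}$ give a counterexample to the statement as literally written. In the application the limits are $e_1,e_2,e_3$, which are in general position, so the use of the lemma there is unaffected.
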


\begin{proof}
 The fundamental representation corresponding to the root $\alpha_2$ in $SL_3(\RR)$ is the representation that sends $g \to (g^{t})^{-1}$. Because $(g_n^t)^{-1}$ satisfies the same hypothesis of the lemma, it is enough to prove that the sequence is $\epsilon$-proximal. Fix $\epsilon < m/4$, where $m$ is the minimum pairwise distance between $\ell_i$ and $\ell_j$. We will prove the lemma as a consequence of Lemma \ref{lem:implicaproximal}. For this, we need to estimate $\|g_n|_{g_n^-}\|$.

 Observe that $g_n|_{g_n^-}$ can be thought of as a matrix in $SL_2(\RR)$. Without loss of generality, we may assume that the eigenvectors are $v_{2,n} = (1,0)$ and $v_{3,n} = (cos(\theta_n), sin(\theta_n))$, for $\theta_n = d(\ell_{2, n}, \ell_{3, n})$. Because the matrix whose columns are $v_{2, n}$ and $v_{3,n}$ has singular values $\sqrt{1 \pm cos(\theta_n)}$, we have that $$\|g_n|_{g_n^-}\| \le \lambda_{2,n}\sqrt{\frac{1 + \cos(\theta_n)}{1 - \cos(\theta_n)}}.$$
 The bound on $\theta_n$ implies that $\sqrt{\frac{1 + \cos(\theta_n)}{1 - \cos(\theta_n)}}$ is uniformly bounded, so for $n$ big enough we have that 
 $$\frac{\|g_n|_{g_n^-}\|}{\lambda_{1, n}} \le \sin^2 (\epsilon),$$
 which proves the lemma.
\end{proof}

\begin{coro}
  \label{coro:estanseparadas}
Under the conditions of Theorem \ref{thm:divergingpants}, we get that for every $\varepsilon$ there exists $n_0$ such that $\rho_n (s)$ is an $\varepsilon$-loxodromic element for every $n \geq n_0$.
\begin{proof}
  
 Notice that since $\rho_n (a)$ is a product of $e$ and $t-$matrix, we can use Lemma \ref{lemma:eigenvalues}, which tells us that asymptotically the eigenvectors of $\rho_n (a)$ are close to $e_1,e_2$ and $e_3$.
 Moreover, this lemma also tells us that under the shearing divergence conditions, we verify the hypothesis of the previous lemma, and therefore $\rho_n (a)$ is eventually $\varepsilon-$separated for any $\varepsilon$.
   
 Notice that $\rho_n (b)$ and $\rho_n (c)$ are conjugates to a product of $t$ and $e$-matrices, via a matrix $T(X_1^n)$ that stays in a compact set of $SL_3 (\mathbb{R})$ (because $X_1^n$ is bounded away from zero and infinity), the same argument shows that $\rho_n (b)$ and $\rho_n (c)$ are $\varepsilon-$separated for $n$ large enough.
\end{proof}
\end{coro}
Therefore, to prove Theorem \ref{thm:divergingpants}, it will suffice to exhibit a family of multicones.

\begin{figure}[h!]
   \begin{center}
      \includegraphics[scale=1.5]{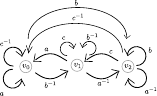}
   \end{center}
   \caption{Recurrent part of a Strong Markov structure for the free group. In the notation of the proof of Theorem \ref{thm:divergingpants}, we have that $v_0 = v(a) = v(c^{-1})$.}
   \label{fig:markov_recurrent}
\end{figure}

\begin{proof}[Proof of Theorem \ref{thm:divergingpants}]

 Recall from the notation of Section \ref{sec:epsilon}, that given $g \in SL_3(\RR)$, $\mathcal{U}^+(g, \epsilon)$ is the open ball of radius $\varepsilon$ in $\PP^2$ around $g_+$, and $\mathcal{U}^-(g, \epsilon)$ is the set of points in $\PP^2$ that are at distance at least $\epsilon$ from $g_-$.

 Consider the strong Markov coding of $\Gamma$ with respect to the set of generators $S = \{ a, b, c, a^{-1}, b^{-1}, c^{-1} \}$, as shown in Example \ref{ex:codings} (we include Figure \ref{fig:markov_recurrent} for the reader's convenience). 
 Given the metric on $\mathcal{F}$ induced from a chosen inner product in $\mathbb{R}^3$, we will define the multicone of $v_i$ to be
 $$M_{v_i} = \bigcup_{e \in \mathcal{E}: f(e) = {v_i}}\mathcal{U}^+(\rho_n (\pi(e)^{-1}), \varepsilon).$$
We need to show that this is an invariant family of multicones. Because of the Markov structure's symmetry, we restrict our study to a specific vertex.

\textbf{Step 1:} Start by noticing that thanks to Corollary \ref{coro:estanseparadas}, for any $\varepsilon > 0$ small enough, and $n$ big enough we have that $\rho_n(g)$ is $\varepsilon$-separated for any $g \in S$.

\textbf{Step 2:} We will now locate the positions of the eigendirections for the generators in order to verify the separation condition. 

A feature of this Markov coding is that the terminal vertex $f(e)$ of an edge is determined by its label $\pi (e)$.
For a generator $g \in S$, we denote by $v(g)$ the vertex that has incoming edges of label $g$.

 It follows from the proof of the previous lemma that for large enough $n$, the attracting, central, and repelling lines of $\rho_n (a)$ are close to $e_1, e_2$, and $e_3$, respectively.
 Similarly, the attracting, central and unstable directions of $\rho_n(b)$ and $\rho_n (c)$ are close to the vectors $(e_3, (1 + X_1^n)e_3 - e_2, e_1-e_2 + e_3)$ and $(e_1 - e_2+e_3, (X_1^n+1)e_1 - X_1^n e_2, e_1)$ respectively.

 A quick check on the positions of the attractors and repellors of the involved object tells us that, for $\varepsilon$ small enough and $n$ big enough, we have the inclusion 
 $$\mathcal{U}^+(\rho_n(g^{-1}), \varepsilon) \subseteq \mathcal{U}^-(\rho_n(a^{-1}),\varepsilon),$$
 for $g = a, b, b^{-1}, c^{-1}$. 
 This is the situation for $v = v(a) = v(c^{-1})$, a symmetry argument shows that the same holds for the remaining vertices.

 If we can prove the invariance, this property implies separation.
 
\textbf{Step 3:} Now we are ready to prove the invariance.
Once we do this, the strong separation follows from the previous steps.

Since every generator is $\varepsilon$-separated, the previous inclusion implies that
$$\rho_n(a^{-1})M_{v(a)} \subseteq \mathcal{U}^+(\rho_n(a^{-1}), \varepsilon).$$
By the symmetry argument, we have that for every $g \in S$,
$$\rho_n(g^{-1})M_{v(g)} \subseteq \mathcal{U}^+(\rho_n(g^{-1}), \varepsilon).$$
Observe that, given an edge $e$ going from $v(g_1)$ to $v(g_2)$, its label must be $g_2$. 
The previous inclusion implies
$$\overline{\rho_n(\pi(e)^{-1})M_{v(g_2)}} \subseteq \mathcal{U}^+(\rho_n(g_2^{-1}), \varepsilon) \subseteq M_{v(g_1)},$$
which is the invariance condition. This concludes the theorem.
\end{proof}

\subsection{Application to the slice with constant triple ratio}

We finish the paper by reaping the benefits of having a strongly separated sequence given by Theorem \ref{thm:general+}.
For simplicity, we will restrict to a particular slice of convex projective structures of independent interest, for which we can use the results from the previous section.
More precisely, we will consider $\mathbb{RP} (X_1, X_2)$, the family of holonomies of convex projective structures for which the triple ratio invariants are constant and equal to $(X_1, X_2)$.

Let $(\mathcal{V}, \mathcal{E}, \pi)$ be the coding given by Example \ref{ex:codings}. Note that for this example, $X_\mathcal{G}$ is conjugated to a shift of finite type $X_A$, with the matrix 
      $$
 A =
        \begin{pmatrix}
            1 & 0 & 1 & 0 & 1 & 1 \\
            1 & 1 & 0 & 1 & 0 & 1 \\
            0 & 1 & 1 & 1 & 1 & 0 \\
            0 & 1 & 1 & 1 & 1 & 0 \\
            1 & 0 & 1 & 0 & 1 & 1 \\
            1 & 1 & 0 & 1 & 0 & 1
        \end{pmatrix},
      $$
 where the columns, in order, represent $a, b, c, a^{-1}, b^{-1}, c^{-1}$ respectively. 

The following is the combination of Theorem \ref{thm:divergingpants} and Theorem \ref{thm:general+}:

\begin{thm}
  \label{thm:sl3}
 Let $\varphi \in \mathfrak{a}^*$ and $(\vec{Z_t})_{t \in \mathbb{R}_{\geq 0}}, (\vec{W_t})_{t \in \mathbb{R}_{\geq 0}}$ be a path of vectors such that $\vec{Z_t},\vec{W_t} \to \infty$.
 Then, given $(\rho_t) \in \mathbb{RP}(X_1, X_2)^{\mathbb{R}_{\geq 0}}$ a path of holonomies of convex projective structures with edge invariants $\vec{W_t},\vec{Z_t}$, and assuming $\varphi$ is positive on their limit cone, we get that
   \begin{enumerate}
      \item Let $v_i (t) = (\log (W^i_t Z^{i-1}_t)) \omega_1 + \log (Z^i_t W^{i-1}_t) \omega_2$, $w_i (t) = (\log (Z^i_t W^{i-1}_t)) \omega_1 + (\log (W^i_t Z^{i-1}_t)) \omega_2$, we get that
         $$\lim_{t \to \infty }\frac{h_{\rho_t} (\varphi)}{s(t)} =1,$$
 where $s(t)$ is a positive solution to the equation
         $$\prod_{i=1}^3 (1-e^{-s (t) \varphi (v_i (t))}) + \prod_{i=1}^3 (1-e^{-s (t) \varphi (w_i (t))}) + \sum_{i=1}^3 e^{-s (\varphi (v_i + w_i))} =1.$$
      \item Let $\rho_t \in \mathbb{RP} (X_1, X_2)$, and $\rho_t' \in \mathbb{RP} (X_1', X_2')$ be a path of representations in the constant triple ratio slice, and defined by the same family of cross ratios given by $\vec{Z_t},\vec{W_t}$, then
      $$d_{Th}^\varphi (\rho_t,\rho_t') \to_{t \to \infty} 0.$$
      \item If $\max \{\varphi (v_i (t)),\varphi (w_i (t))\}_{i=1,2,3}/\min \{\varphi (v_i (t)),\varphi (w_i (t))\}_{i=1,2,3}$ is bounded above and below for every $t$, then the Thurston asymmetric metric defined by $\varphi$ is finite.
   \end{enumerate}
\end{thm}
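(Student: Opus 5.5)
The plan is to obtain all three items by feeding Theorem~\ref{thm:divergingpants} into Theorem~\ref{thm:general+}, with $\varepsilon$ fixed once and for all. We then compute the approximating Cartan exponent explicitly from the transfer matrix $A$.

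\textbf{Step 1 (strong separation).} Since $X_1,X_2$ are constant and $\vec Z_t,\vec W_t\to\infty$, the left-hand sides of \eqref{eq:admissible} diverge. So $(\rho_t)$ is shear divergent with $X_1$ bounded away from $0$ and $\infty$. Theorem~\ref{thm:divergingpants} then gives a fixed small $\varepsilon$ and $T_0$ such that $\rho_t$ is $\varepsilon$-strongly separated with respect to $S$ for $t\ge T_0$; in particular $-\log\sin\varepsilon$ is constant.

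\textbf{Step 2 (Cartan projections of the generators).} Using the explicit formulas for $\rho(a^{-1}),\rho(b^{-1}),\rho(c^{-1})$ and Lemma~\ref{lemma:eigenvalues}, we compute $\lambda(\rho_t(g))$ for $g\in S$. For instance, $\alpha_1(\lambda(\rho_t(a^{-1})))$ and $\alpha_2(\lambda(\rho_t(a^{-1})))$ are sums of $\log W$'s and $\log Z$'s, up to the constant $\log X_1-\log X_2$. Writing these in the basis $\omega_1,\omega_2$, we identify the Jordan projections of the three cuffs with $v_i(t)+O(1)$. Their inverses give $\iota(v_i)=w_i(t)+O(1)$, with the indices matched to the labelling in the statement.

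Since each $\rho_t(g)$ is $\varepsilon$-separated, Lemma~\ref{lemma:proximalestimate} in each fundamental representation gives $\kappa-\lambda=O(\log\sin\varepsilon)$. Hence $\kappa(\rho_t(g^{\pm1}))=v_i(t)+O(1)$ or $w_i(t)+O(1)$, where the $O(1)$ depends only on $X_1,X_2,\varepsilon$. Because $\vec Z_t,\vec W_t\to\infty$ and $\varphi$ is positive on the limit cone, every $\varphi(v_i),\varphi(w_i)\to\infty$. In particular, the hypothesis $\min_g\varphi(\kappa(\rho_t(g)))/(-\log\sin\varepsilon)\to\infty$ of Theorem~\ref{thm:general+} holds.

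\textbf{Step 3 (item 1).} Theorem~\ref{thm:general+} gives $h_{\rho_t}(\varphi)/h^\kappa_{\rho_t}(\varphi)\to1$. By Appendix~\ref{app:subshifts}, $h^\kappa$ is the $s$ at which the matrix $A_s=(A_{gh}e^{-s\varphi(\kappa(\rho_t(g)^{-1}))})$ has spectral radius $1$. By Perron--Frobenius this is equivalent to $\det(I-A_s)=0$ with $s$ the relevant root.

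Two things remain.
\begin{enumerate}
\item Replacing $\kappa$ by $v_i,w_i$ perturbs each weight by a factor $e^{sO(1)}$. Since $s\,\varphi(v_i)$ stays bounded while $\varphi(v_i)\to\infty$, we have $s\to0$, so these factors tend to $1$. Monotonicity of the pressure then shows the root changes by a factor tending to $1$.
\item Expanding $\det(I-A_s)$ using the block structure of $A$ (rows for $a,a^{-1}$ etc.\ repeat) should produce exactly $\prod(1-e^{-s\varphi(v_i)})+\prod(1-e^{-s\varphi(w_i)})+\sum e^{-s\varphi(v_i+w_i)}=1$.
\end{enumerate}
This determinant bookkeeping, and the exact matching of $v_i,w_i$ with the generators in Step 2, is where I expect the main work; I would first reduce $A$ by the identical row pairs to a $3\times 3$ system.

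\textbf{Step 4 (items 2 and 3).} For item 2, apply Step 2 to both $\rho_t$ and $\rho'_t$. Their generators' Cartan projections differ by $O(1)$, while each $\varphi(\kappa)$ diverges, so the ratios tend to $1$ uniformly in $g$. Hence the equations defining $h^\kappa_{\rho_t}$ and $h^\kappa_{\rho'_t}$ have asymptotic roots, and by Step 3 the ratio $h_{\rho'_t}/h_{\rho_t}\to1$.

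For a conjugacy class, take a representative power given by Proposition~\ref{prop:cantrell}. Lemma~\ref{lemma:lasdistancias}, applied to $\varphi$ and to $\varphi\circ\iota$ (note $\iota(\Pi)=\Pi$), shows both $L^\varphi$'s are asymptotic to the same sums $\sum\varphi(\kappa(g_i))$, uniformly in $\gamma$. Therefore $\sup_{[\gamma]}$ of the ratio tends to $1$, and $d^\varphi_{Th}\to0$.

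For item 3, the bounded ratio $\max/\min$ is exactly the hypothesis in the Remark following Theorem~\ref{thm:general+}. The second part of that theorem then gives finiteness of $d_{Th}^\varphi$ in both directions.
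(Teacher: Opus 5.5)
Your overall plan is essentially the paper's. Theorem~\ref{thm:divergingpants} feeds Theorem~\ref{thm:general+}, Lemma~\ref{lemma:locallyconstant} turns $h^\kappa_{\rho_t}$ into a spectral-radius-one condition, and items 2 and 3 come from Lemmas~\ref{lemma:eigenvalues} and~\ref{lemma:lasdistancias} and the Remark after Theorem~\ref{thm:general+}. The paper passes through $h^\lambda_{\rho_t}$, while you perturb $h^\kappa_{\rho_t}$ directly; that difference is immaterial.

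The step that fails is Step 3(2): no bookkeeping turns $\det(I-A_s)$ into the displayed equation. Do the reduction you suggest. Rows $a,b,c$ of $A$ equal rows $b^{-1},c^{-1},a^{-1}$, so $A=UR$, where $R$ is the $3\times 6$ block of rows $a,b,c$ and $U$ is the $6\times 3$ $0$-$1$ matrix recording which of them each row repeats. With $D$ the diagonal matrix of row weights, $A_s=DA$ and $\det(I_6-DA)=\det(I_3-RDU)$. Use the paper's weights $p_i=e^{-s\varphi(w_i)}$ on rows $a,b,c$ and $q_i=e^{-s\varphi(v_i)}$ on rows $a^{-1},b^{-1},c^{-1}$. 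Then
\[
RDU=\begin{pmatrix} p_1+q_2 & q_3 & p_3\\ p_1 & p_2+q_3 & q_1\\ q_2 & p_2 & p_3+q_1\end{pmatrix},
\]
and
\[
\det(I-RDU)=\prod_{i}(1-p_i)+\prod_{i}(1-q_i)+\sum_{i}p_iq_i-p_1p_2p_3-q_1q_2q_3-1 .
\]
So the correct equation has the extra terms $-e^{-s\sum_i\varphi(v_i)}-e^{-s\sum_i\varphi(w_i)}$, and they are not negligible.

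Take the symmetric case $\varphi(v_i)=\varphi(w_i)=L$ (e.g.\ all $Z_i=W_i$ and $\varphi=\alpha_1+\alpha_2$), and put $x=e^{-sL}$. Then $\det(I-RDU)=(1-4x)(1-x)^2$, so the Perron root is $s=\log 4/L$, consistent with the Remark after the theorem. The displayed equation instead reduces to $(2x-1)(x^2-4x+1)=0$, whose roots with $s>0$ are $\log 2/L$ and $\log(2+\sqrt3)/L$. Item 1 with the displayed equation therefore contradicts Theorem~\ref{thm:general+} itself, and no argument can establish it. The paper's proof only asserts this computation, so the defect is in the statement rather than your strategy. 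With the corrected equation your argument goes through, provided $s(t)$ is the root at which the Perron eigenvalue equals $1$; note that $s=0$ also solves the corrected equation.

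There are two smaller points. First, in Step 2, positivity of $\varphi$ on each limit cone holds at each fixed $t$ separately. It does not by itself give $\varphi(v_i),\varphi(w_i)\to\infty$. You need the hypothesis $\min_{g}\varphi(\kappa(\rho_t(g)))\to\infty$ of Theorem~\ref{thm:general+}. It holds, for instance, when $\varphi(\omega_1),\varphi(\omega_2)>0$, and the paper also uses it tacitly.

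Second, in Step 3(1), $s\,\varphi(v_i)$ need not stay bounded when $\max/\min$ is unbounded. What is true is only that $s\min\varphi$ stays bounded. A cleaner route: let $V_\varphi$ be the locally constant potential with values $\varphi(v_i),\varphi(w_i)$, and set $m=\min V_\varphi$. From $|C_\varphi-V_\varphi|\le K$ you get $(1-K/m)V_\varphi\le C_\varphi\le(1+K/m)V_\varphi$. Monotonicity of pressure then places $h^\kappa_{\rho_t}/s(t)$ in $[(1+K/m)^{-1},(1-K/m)^{-1}]$.
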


\begin{proof}
\begin{enumerate}
    \item 
 Recall that the automaton defined by the symmetric set of generators $a^{\pm 1}, b^{\pm 1}, c^{\pm 1}$ is conjugated to the shift of finite type for the matrix $A$. 

 By Theorem \ref{thm:general+}, we have that $$\lim_{t \to \infty} \frac{h_{\rho_t}(\varphi)}{h_{\rho_t}^\kappa(\varphi)} = 1,$$
 where $h_{\rho_t}^\kappa(\varphi)$ is the only solution to $P(-sC_\varphi) = 0$, for $C$ the locally constant potential given by the Cartan projection of the first letter of the coding. 
    
 Notice that under the strong separation conditions, $\varphi (\kappa (\rho_n (g)))$ is asymptotic to $\varphi (\lambda (\rho_n (g)))$.
 Particularly, we get that $\lim_{t \to \infty} h_{\rho_t}^\kappa/h_{\rho_t}^\lambda = 1$.
 Using Lemma \ref{lemma:locallyconstant}, we get that $h_{\rho_t}^\lambda$ is the unique number for which the matrix:

 $$\mathcal{L}_s = 
 \begin{pmatrix}
 e^{- s \varphi (w_1 (t))} & 0 & e^{- s \varphi (w_1 (t))} & 0 & e^{- s \varphi (w_1 (t))} & e^{- s \varphi (w_1 (t))}\\
 e^{-s \varphi (w_2 (t))} & e^{-s \varphi (w_2 (t))} & 0 & e^{-s \varphi (w_2 (t))} & 0 & e^{-s \varphi (w_2 (t))}\\
   0 & e^{-s \varphi (w_3 (t))} & e^{-s \varphi (w_3 (t))} & e^{-s \varphi (w_3 (t))} & e^{-s \varphi (w_3 (t))} & 0\\
   0 & e^{-s \varphi (v_1 (t))} & e^{-s \varphi (v_1 (t))} & e^{-s \varphi (v_1 (t))} & e^{-s \varphi (v_1 (t))} & 0\\
 e^{-s \varphi (v_2 (t))} & 0 & e^{-s \varphi (v_2 (t))} & 0 & e^{-s \varphi (v_2 (t))} & e^{-s \varphi (v_2 (t))}\\
 e^{-s \varphi (v_3 (t))} & e^{-s \varphi (v_3 (t))}  & 0 & e^{-s \varphi (v_3 (t))}  & 0 & e^{-s \varphi (v_3 (t))} .
 \end{pmatrix}
 $$
 An algebraic calculation (maybe done by computer) produces an expression for the characteristic polynomial of this matrix.
 Assuming one is a solution, one gets the equation in the first item.
 
      \item Again, since $\varphi (\kappa (\rho_n (g)))$ is asymptotic to $\varphi (\lambda (\rho_n (g)))$, it is enough to estimate the former.
 On the other hand, Lemma \ref{lemma:eigenvalues} shows that the asymptotics for the Jordan projection of $a,b,c$ at constant triple ratio are independent of $(X_1, X_2)$.
 Particularly, the asymptotics for the length function are independent of which slice you are in.
      
   \item This is a direct consequence of the second part of Theorem \ref{thm:general+}.
\end{enumerate}
\end{proof}

\begin{rmk}
 Notice that 
   $$\frac{2 \log 2}{\max_i \{ \varphi (v_i (t)),\varphi (w_i (t)) \}} \leq s(t) \leq \frac{2 \log 2}{\min_i \{ \varphi (v_i (t)),\varphi (w_i (t)) \}},$$
 for every $t$.
 This follows from the proof by bounding $\mathcal{L}_s$ above and below the positive matrix $\mathcal{L}_{-s}$ by positive matrices depending only on the min/max of $\varphi (v_i)$ and $\varphi (w_i)$.
\end{rmk}

%% file: sections/finite_type.tex
\section{Subshifts of finite type} 
\label{app:subshifts}
In this appendix we review some theory of subshifts of finite type. These will be used for explicit computations in Section \ref{sec:sl3}. For more information on this topic one can check \cite{parry1990zeta}.

Let $\mathcal{A} = \{a_1, \ldots a_k\}$ be an alphabet consisting of $k$-symbols, and consider $A = (A(i,j))_{i,j \in \{1,\ldots, k\}}$ a \emph{grammar matrix} composed of $0$'s and $1$'s.
We define the \emph{(one-sided) shift space} as
$$X_A = \{(x_n)_{n\geq 0} \in \mathcal{A}^\mathbb{N} : A(x_n,x_{n+1}) =1,\: \forall n \}.$$
This set can be recovered from a directed graph $\mathcal{G}_A$ with vertices $\mathcal{A}$, and a directed edge connecting symbols where $A (x_i,x_{i+1}) = 1$.

If we denote by $T: X_A \to X_A$ the function such that $T((x_n)) = (x_{n+1})$, we get that $(X_A, T)$ is a dynamical system, we call this system a \emph{shift of finite type}.

We will always assume that the associated graph $\mathcal{G}_A$ is irreducible and aperiodic.
Under these conditions, $(X_A, T)$ is topologically mixing and admits plenty of invariant measures. The pressure functional $P(\phi)$, defined in Definition \ref{def:pressure}, 
can also be defined for shifts of finite type via the same formula.

It turns out that one can recover the pressure and the equilibrium measure as the eigenvalue (respectively, eigenvector) of the \emph{Ruelle operator}.
Let $C^\alpha (X_A)$ be the space of $\alpha-$H\"older functions from $X_A$ to $\RR$. Given $\phi \in C^\alpha (X_A)$, we define the Ruelle operator $\mathcal{L}_\phi : C^\alpha (X_A) \rightarrow C^\alpha (X_A)$ via
$$\mathcal{L}_\phi (f) (\underline{x}) = \sum_{\underline{y}\in T^{-1}(\underline{x})} e^{\phi (\underline{y})} f(\underline{y}).$$
This operator is positive in the sense that it preserves the cone of positive H\"older functions.
Moreover, we have the following theorem:

\begin{thm}[{\cite[Theorem 3.5]{parry1990zeta}}]
   \label{thm:parry}
 Given $\phi \in C^\alpha (X_A)$, the operator $\mathcal{L}_\phi$ has a simple eigenvalue with maximal modulus $e^{P(\phi)}$.
\end{thm}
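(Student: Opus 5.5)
The plan is the classical Ruelle--Perron--Frobenius argument: first produce a positive eigenfunction and an eigenmeasure for $\mathcal{L}_\phi$, then identify the eigenvalue with $e^{P(\phi)}$ through the growth of $\mathcal{L}_\phi^n 1$, and finally prove simplicity and isolation via a cone contraction on H\"older functions. Throughout we use that $\mathcal{G}_A$ is irreducible and aperiodic, so there is $M$ with $A^M>0$. Then every $\underline{x}$ has preimages under $T^M$ starting with any symbol, and $\mathcal{L}_\phi^M$ sends nonnegative nonzero functions to strictly positive ones.

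\textbf{Step 1 (eigenmeasure and eigenvalue).} The dual operator $\mathcal{L}_\phi^*$ acts on probability measures on $X_A$. I would apply the Schauder--Tychonoff fixed point theorem to $\nu \mapsto \mathcal{L}_\phi^*\nu / (\mathcal{L}_\phi^*\nu)(1)$. This gives $\nu$ and $\Lambda>0$ with $\mathcal{L}_\phi^*\nu=\Lambda\nu$.

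\textbf{Step 2 (bounded distortion).} I would establish distortion bounds: if $\underline{x},\underline{x}'$ agree in their first $k$ symbols and $\underline{y},\underline{y}'$ are corresponding $n$-th preimages, then $|\Sigma_n\phi(\underline{y})-\Sigma_n\phi(\underline{y}')|\le C\theta^{k}$ with $C$ independent of $n$. This uses only the H\"older continuity of $\phi$ and summing a geometric series.

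\textbf{Step 3 ($\Lambda = e^{P(\phi)}$).} Combining Step 2 with mixing gives uniform comparability
$$\mathcal{L}_\phi^n 1(\underline{x}) \asymp \sum_{|w|=n}\exp\Big(\sup_{[w]}\Sigma_n\phi\Big).$$
Integrating against $\nu$ gives $\int \mathcal{L}_\phi^n 1\, d\nu=\Lambda^n$. Hence $\frac1n\log$ of the right-hand side tends to $\log\Lambda$, and by Definition~\ref{def:pressure} this is $P(\phi)$.

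\textbf{Step 4 (positive eigenfunction).} I would consider the cone $\mathcal{C}_K=\{f>0: f(\underline{x})\le e^{K d(\underline{x},\underline{x}')^\alpha}f(\underline{x}')\}$. Step 2 shows $\Lambda^{-1}\mathcal{L}_\phi$ maps $\mathcal{C}_K$ into $\mathcal{C}_{\theta K + C}$, which lies inside $\mathcal{C}_K$ for $K$ large. The normalized Ces\`aro averages of $\Lambda^{-n}\mathcal{L}_\phi^n 1$ are then equicontinuous and bounded above and below. Arzel\`a--Ascoli gives $h\in\mathcal{C}_K$ with $\mathcal{L}_\phi h=\Lambda h$, and $h$ is H\"older.

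\textbf{Step 5 (simplicity and maximal modulus).} This is the main obstacle, because it needs a contraction rather than mere invariance. I would first try Birkhoff's projective metric: for $K'$ slightly smaller than $K$, the image $\mathcal{L}_\phi^M(\mathcal{C}_K)\subset \mathcal{C}_{K'}$ has finite diameter in the Hilbert metric of $\mathcal{C}_K$, using positivity of $A^M$. Birkhoff's theorem then makes $\mathcal{L}_\phi^M$ a strict contraction of this metric. Consequently, $\Lambda^{-n}\mathcal{L}_\phi^n f \to h\int f\,d\nu$ exponentially fast in the H\"older norm for every H\"older $f$ (after normalizing $\int h\,d\nu=1$), since any H\"older $f$ becomes an element of the cone after adding a large multiple of $h$. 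This exponential convergence shows three things:
\begin{itemize}
\item the eigenspace of $\Lambda$ is spanned by $h$;
\item there are no Jordan blocks at $\Lambda$;
\item the rest of the spectrum lies in a disc of radius $<\Lambda$, so no other eigenvalue has modulus $\Lambda$.
\end{itemize}
Together with Step 3, this gives the simple eigenvalue of maximal modulus $e^{P(\phi)}$.
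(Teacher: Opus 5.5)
Your argument is correct in outline. It is not, however, the route behind the statement: the paper gives no proof here and simply cites Parry--Pollicott.

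In their Ruelle operator theorem, Steps 1, 2 and 4 are essentially as you have them. After that the argument differs. They normalize the potential with $h$ so that $\mathcal{L}_\phi 1=1$. They then get uniform convergence of the iterates to $\int w\,d\nu$ by a compactness argument that uses mixing. Finally they derive the gap on $C^\alpha(X_A)$ from a Lasota--Yorke type inequality $|\mathcal{L}^n w|_\alpha\le C\|w\|_\infty+\theta^n|w|_\alpha$, combined with compactness of H\"older balls in $C(X_A)$.

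Your Step 5 replaces all of this with a single Birkhoff contraction in the Hilbert metric of the cone, in the style of Liverani. That one estimate gives a one-dimensional eigenspace, no Jordan blocks and the spectral gap together, with an explicit rate and no appeal to quasi-compactness. Their route additionally yields a bound on the essential spectral radius, which is what perturbation theory and zeta functions need. Your Step 3 also suits this paper well: it reads off $\Lambda=e^{P(\phi)}$ directly from the partition-function Definition~\ref{def:pressure}, which is the definition the paper actually uses.

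Two remarks.

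First, the cone in Steps 4--5 must constrain only pairs with $x_0=x_0'$. For pairs in different first-symbol cylinders the preimages need not pair up, because the columns of $A$ may differ. So for a cone imposed on all pairs, the claimed invariance $\mathcal{C}_K\to\mathcal{C}_{\theta K+C}$ fails in general; the golden mean shift already gives a counterexample. With the local cone, Step 4 works using the two-sided bounds from Step 3. The global comparability needed for finite diameter in Step 5 then comes exactly from $A^M>0$, as you indicate.

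Second, the paper only uses this theorem to prove Lemma~\ref{lemma:locallyconstant}. For locally constant $\phi$ the partition sum is comparable to $\|A_\phi^{\,n}\|$, so in that case Perron--Frobenius for $A_\phi$ and Gelfand's formula already suffice.
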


A potential $\phi \in C(X_A)$ is called \emph{locally constant} if it depends only on the first coordinate of a sequence $\underline{x} \in X_A$. Such a potential defines a distance on $\mathcal{G}_A$ by declaring $d_\phi (e_{ij}) = \phi (i)$.
In these conditions, any eigenfunction for $\mathcal{L}_\phi$ will again be locally constant.
As a consequence, we have the following lemma:

\begin{lem}
\label{lemma:locallyconstant}
Let $\phi \in C^{\alpha}(X_A)$ be a locally constant function. We define the \emph{weighted matrix} $A_\phi$ as the matrix obtained by replacing $a_{ij}$ in $A$ by $e^{\phi(i)}$. Then the spectral radius of $A_\phi$ is $e^{P(\phi)}$.
\end{lem}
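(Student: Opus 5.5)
We prove Lemma \ref{lemma:locallyconstant} by reducing the transfer operator $\mathcal{L}_\phi$ to a finite-dimensional linear map on locally constant functions, and then invoking Theorem \ref{thm:parry}.

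\textbf{Step 1: the finite-dimensional invariant subspace.} Let $E \subset C^\alpha(X_A)$ be the $k$-dimensional space of functions depending only on the first coordinate. Write $f(\underline{x}) = f_{x_0}$ for $f \in E$. The preimages of $\underline{x}$ under $T$ are the sequences $i\underline{x}$ with $A(i,x_0)=1$, so
$$\mathcal{L}_\phi f(\underline{x}) = \sum_{i} A(i,x_0)\, e^{\phi(i)} f_i .$$
This is again a function of $x_0$ alone, so $E$ is $\mathcal{L}_\phi$-invariant.

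\textbf{Step 2: the matrix of the restriction.} In the basis of indicator functions of the cylinders $[a_j]$, the restriction $\mathcal{L}_\phi|_E$ acts on vectors $(f_j)$ by the matrix whose $(j,i)$ entry is $A(i,j)e^{\phi(i)}$. This is the transpose of $A_\phi$, which has the same spectrum and hence the same spectral radius as $A_\phi$.

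\textbf{Step 3: Perron--Frobenius.} Since $\mathcal{G}_A$ is irreducible and aperiodic, $A_\phi^T$ is a primitive nonnegative matrix. By Perron--Frobenius its spectral radius $r$ is a simple eigenvalue with a strictly positive eigenvector $h \in E$. So $h$ is a positive Hölder eigenfunction of $\mathcal{L}_\phi$ with eigenvalue $r$.

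\textbf{Step 4: identifying $r$ with $e^{P(\phi)}$.} This is the only real subtlety. Theorem \ref{thm:parry} says the maximal-modulus eigenvalue $e^{P(\phi)}$ is simple, but a priori $r$ could be some other eigenvalue. We rule this out with a positivity argument. Since $h>0$ is continuous on a compact space, $c \le h \le C$ for constants $0<c\le C$. Positivity of $\mathcal{L}_\phi$ then gives
$$c\,\mathcal{L}_\phi^n 1 \le \mathcal{L}_\phi^n h = r^n h \le C r^n .$$
Similarly $\mathcal{L}_\phi^n 1 \ge r^n h/C \ge (c/C)\, r^n$. Hence $\frac1n\log \|\mathcal{L}_\phi^n 1\|_\infty \to \log r$.

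On the other hand, $\mathcal{L}_\phi^n 1(\underline{x}) = \sum_{T^n\underline{y}=\underline{x}} e^{\Sigma_n\phi(\underline{y})}$. Because $\phi$ is locally constant, $\Sigma_n\phi$ is constant on $n$-cylinders. Summing over admissible words of length $n$ with a fixed compatible continuation $\underline{x}$ therefore gives the pressure sum of Definition \ref{def:pressure}, up to a factor between $1$ and $k$. That factor comes from the choice of $x_0$, and aperiodicity lets us fill in transitions in a bounded number of steps. Thus $\frac1n \log\|\mathcal{L}_\phi^n 1\|_\infty \to P(\phi)$, so $r = e^{P(\phi)}$.

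Alternatively, one can compute directly that the pressure sum equals $\mathbf{1}^T A_\phi^{\,n-1}\mathbf{e}$ with positive vectors. Its exponential growth rate is the spectral radius by Gelfand's formula together with primitivity, which bypasses the operator entirely.
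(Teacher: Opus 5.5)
Your proof is correct. Steps 1--3 are the paper's argument: restrict $\mathcal{L}_\phi$ to functions of the first coordinate, where it acts by $A_\phi$ (strictly by $A_\phi^{T}$, as you note, which has the same spectral radius), and apply Perron--Frobenius.

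The genuine difference is how you identify the Perron root $r$ with $e^{P(\phi)}$. The paper takes this from Theorem \ref{thm:parry}. It relies on its preceding remark that eigenfunctions of $\mathcal{L}_\phi$ are locally constant, so that the leading eigenvalue $e^{P(\phi)}$ on $C^\alpha(X_A)$ is already an eigenvalue of the restriction. That remark is not proved there, and as stated it fails for eigenvalues of small modulus, although it holds for the leading one. For instance, on the full $2$-shift with $\phi=0$, take $u=\pm 1$ according to $x_0$; then $\sum_{n\geq 0}(\lambda/2)^n\, u\circ T^n$ is a Hölder eigenfunction with eigenvalue $\lambda$ for small $\lambda\neq 0$, and it is not locally constant.

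You instead derive the identification directly from Definition \ref{def:pressure}. The positive locally constant eigenfunction gives $\mathcal{L}_\phi^n 1\asymp r^n$, and the pressure sum $Z_n$ satisfies $\|\mathcal{L}_\phi^n 1\|_\infty\le Z_n\le k\,\|\mathcal{L}_\phi^n 1\|_\infty$. This comparison only needs every symbol to have a successor; aperiodicity plays no role there.

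Your alternative is the most elementary: write $Z_n=\mathbf{1}^{T}A_\phi^{\,n-1}\mathbf{e}$ with $\mathbf{e}=(e^{\phi(j)})_j$ and apply Gelfand's formula. It uses neither the transfer operator nor Theorem \ref{thm:parry}, and does not even need irreducibility or primitivity.

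So the paper's route is a one-liner given the Ruelle--Perron--Frobenius theorem. Yours is self-contained and supplies exactly the step the paper leaves implicit.
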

\begin{proof}
Locally constant functions form a $\mathcal{L}_\phi$-invariant subspace on which the operator acts as $A_\phi$. Since $A_\phi$ is a non-negative irreducible matrix, its leading eigenvalue coincides with the spectral radius $e^{P(\phi)}$ given by Theorem~\ref{thm:parry}.
\end{proof}